\newcommand{\acton}{\rotatebox[origin=c]{90}{ $ \circlearrowleft $ }}
\theoremstyle{plain}
\newtheorem{theorem}{Theorem}[section]
\theoremstyle{definition}
\newtheorem{lemma}[theorem]{Lemma}
\newtheorem{prop}[theorem]{Proposition}
\newtheorem{corollary}[theorem]{Corollary}
\newtheorem{claim}[theorem]{Claim}
\newtheorem{remark}[theorem]{Remark}
\newtheorem{definition}[theorem]{Definition}
\DeclareMathOperator{\de}{def}
\DeclareMathOperator{\perm}{perm}
\newcommand{\mb}[1]{\mathbb{#1}}
\newcommand{\mc}[1]{\mathcal{#1}}
\begin{document}

\title{Dihedral Sieving on Cluster Complexes}
\author[Z. Stier]{Zachary Stier}
\author[J. Wellman]{Julian Wellman}
\author[Z. Xu]{Zixuan Xu}
\date{\today}

\maketitle
\vspace{-0.5cm}
\begin{abstract}
The \emph{cyclic sieving phenomenon} of Reiner, Stanton, and White characterizes the stabilizers of cyclic group actions on finite sets using $q$-analogue polynomials. Eu and Fu demonstrated a cyclic sieving phenomenon on generalized cluster complexes of every type using the $q$-Catalan numbers. In this paper, we exhibit the \emph{dihedral sieving phenomenon}, introduced for odd $n$ by Rao and Suk, on clusters of every type. In the type $A$ case, we show that the Raney numbers count both reflection-symmetric $k$-angulations of an $n$-gon and a particular evaluation of the $q,t$-Fuss--Catalan numbers. We also introduce a sieving phenomenon for the symmetric group, and discuss possibilities for dihedral sieving for even $n$.
\end{abstract}

\section{Introduction}

The {\em cyclic sieving phenomenon} was introduced by V. Reiner, D. Stanton, and D. White in \cite{RSW2004cyclic} as a polynomial method for enumerating fixed points of combinatorial objects. The idea is generally, for each object in a family of combinatorial objects admitting some cyclic group action and for each element of that cyclic group, to assign a polynomial such that its evaluation at a well-chosen complex number exactly equals the number of fixed points under that group element's action. Formally speaking, suppose $X$ is a finite set acted on by a cyclic group $C_n = \langle r \rangle$, and $X(q)$ is a polynomial in $q$. The pair $(X\ \acton \ C_n,X(q))$ exhibits the \emph{cyclic sieving phenomenon} (CSP) if, for all $\ell \in [n]$, 
\[|\{x\in X : r^\ell x = x \}| = X(e^{2\ell \pi i/n}) .\]

The cyclic sieving phenomenon was generalized to finite sets acted on by a product of two cyclic groups, or \emph{bicyclic sieving}, by H. Barcelo, V. Reiner, and D. Stanton in \cite{BHR2008bicyclic}, and was later generalized to any finite abelian group written explicitly as a product of cyclic groups in \cite{bergetEuReiner2011}. 

Furthermore, S. Rao and J. Suk in \cite{rs2017dihedral} extended the sieving phenomenon to the dihedral group $I_2(n)$ for $n$ odd, where $I_2(n) \coloneqq \<r,s|r^n = s^2 = 1,\ rs = sr^{-1} \>$. 
Let the \emph{defining representation} $\rho_{\de}$ be the two-dimensional representation of $I_2(n)$ sending
\begin{align*}
    r &\mapsto \begin{pmatrix} \cos\frac{2\pi}{n} & \sin\frac{2\pi}{n} \\ -\sin\frac{2\pi}{n} & \cos\frac{2\pi}{n} \end{pmatrix} \\
    s &\mapsto \begin{pmatrix} 1 & 0 \\ 0 & -1 \end{pmatrix}.
\end{align*}
Then odd dihedral sieving is defined as follows. Suppose $X$ is a finite set acted on by the dihedral group $I_2(n)$ with $n$ odd, and $X(q,t)$ is a symmetric polynomial in $q$ and $t$. The pair $(X \ \acton \ I_2(n),X(q,t))$ has the \emph{dihedral sieving phenomenon} (DSP) if, for all $g \in I_2(n)$ with eigenvalues $\{\lambda_1,\lambda_2\}$ for $\rho_{\de}(g)$, 
\[|\{x \in X : gx = x\}| = X(\lambda_1,\lambda_2).\]

Note that we require $X$ to be a symmetric polynomial since the eigenvalues $\lambda_1,\lambda_2$ of $\rho_{\de}$ are not ordered. Rao and Suk showed instances of odd dihedral sieving using the \emph{$q,t$-analogues} $\{n\}_{q,t},\{n\}_{q,t}!,$ and $\binomqt{n}{k}_{q,t}$ defined as follows (under slightly different definitions):
\begin{align*}
\{n\}_{q,t} &\coloneqq q^{n-1}+q^{n-2}t+\cdots+t^{n-1} = \sum\limits_{i=0}^{n-1}q^it^{n-1-i} \\
\{n\}!_{q,t} &\coloneqq \{n\}_{q,t}\cdot\{n-1\}_{q,t}\cdot\cdots\cdot\{1\}_{q,t} = \prod\limits_{i=1}^n\{i\}_{q,t} \\
\binomqt{n}{k}_{q,t} &\coloneqq \frac{\{n\}!_{q,t}}{\{k\}!_{q,t}\{n-k\}!_{q,t}}
\end{align*}

The above polynomials correspond to the \emph{Fibonacci polynomials}~\cite{HogLon74} and \emph{Fibonomial coefficients}~\cite{ACMS14} in $X$ and $Y$ used in~\cite[Proposition 3.5]{rs2017dihedral}, through the substitutions $X\mapsto q+t$ and $Y\mapsto -qt$. The following theorem summarizing their instances of dihedral sieving also uses the $q,t$-Catalan numbers $\Cat_n(q,t)$ as defined by A. Garsia and M. Haiman in~\cite{garsiahaimann1996remarkable}. 

\begin{theorem}[\cite{rs2017dihedral}]\label{thm:prevdisieve}
Let $n$ be odd and let $0\le k\le n$. Then the following exhibit dihedral sieving: 
\begin{enumerate}
    \item $\lpr{X \ \acton \ I_2(n), \binomqt{n}{k}_{q,t}}$ for $X$ the set of order-$k$ subsets of the positive integers $[n]$. 
    
    \item $\lpr{X \ \acton \ I_2(n), \binomqt{n+k-1}{k}_{q,t}}$ for $X$ the set of order-$k$ multisubsets of the positive integers $[n]$. 
    
    \item $\lpr{X \ \acton \ I_2(n), \frac{1}{\{n+1\}_{q,t}}\binomqt{2n}{n}_{q,t}}$ for $X$ the set of non-crossing partitions of the $n$-gon. 
    
    \item $\lpr{X \ \acton \ I_2(n), \frac{1}{\{n\}_{q,t}}\binomqt{n}{k}_{q,t}\binomqt{n}{k+1}_{q,t}}$ for $X$ the set of non-crossing partitions of the $n$-gon using $n-k$ blocks. 
    
    \item $\lpr{X \ \acton \ I_2(n), (qt)^{\binom{n-2}{2}}\Cat_{n-2}(q,t)}$ for $X$ the set of triangulations of the $n$-gon. 
\end{enumerate}
\end{theorem}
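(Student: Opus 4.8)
The plan is to exploit the structure of $I_2(n)$ for $n$ odd to split each case into a cyclic sieving statement plus one extra identity. Every element of $I_2(n)$ is a rotation $r^\ell$, with $\rho_{\de}(r^\ell)$-eigenvalues $\{\omega^\ell,\omega^{-\ell}\}$ for $\omega=e^{2\pi i/n}$, or a reflection $sr^\ell$; for $n$ odd the reflections form a single conjugacy class and $\rho_{\de}(sr^\ell)$ has eigenvalues $\{1,-1\}$. Hence for symmetric $X(q,t)$ the DSP is equivalent to the conjunction of a \emph{rotation part}, the equalities $|\{x:r^\ell x=x\}|=X(\omega^\ell,\omega^{-\ell})$ for all $\ell$ (a cyclic sieving statement for $C_n=\langle r\rangle$), and a \emph{reflection part}, the single equality $|X^s|=X(1,-1)$ for one (hence every) reflection $s$. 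I would establish these two parts separately for each of the five polynomials.

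For the rotation part I would show that $X(\omega^\ell,\omega^{-\ell})$ equals the corresponding univariate $q$-analogue at an appropriate root of unity, up to a monomial factor. Cases (1)--(4) are homogeneous of even degree $2e$, so writing $X(q,t)=(qt)^e\,h(q/t+t/q)$ gives $X(\omega^\ell,\omega^{-\ell})=h(\omega^{2\ell}+\omega^{-2\ell})$; since $n$ is odd the map $\ell\mapsto 2\ell$ permutes $\mathbb Z/n$ and $\langle r^{2\ell}\rangle=\langle r^\ell\rangle$, so this matches the $q$-analogue at a root of unity of the right order, the monomial factor being the palindromicity normalization. Case (5) is not homogeneous, so there I would use $\Cat_m(q,t)=\Cat_m(t,q)$ together with the identity $q^{\binom{m}{2}}\Cat_m(q,q^{-1})=\frac{1}{[m+1]_q}\binom{2m}{m}_q$ to rewrite $\Cat_{n-2}(\omega^\ell,\omega^{-\ell})$ in terms of the $q$-Catalan number at a root of unity. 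In each case one then checks the leftover monomial factor is $1$ exactly where the $q$-analogue is nonzero --- a short divisibility argument using $n$ odd, and for (5) the fact that polygon triangulations admit rotational symmetry only of orders $1,2,3$, with order $2$ impossible here. With this, the rotation part follows from known results: the $q$-binomial CSP of Reiner--Stanton--White and its multiset analogue for (1),(2); the classical cyclic sieving for non-crossing partitions of $[n]$, respectively with a prescribed number of blocks, for (3),(4); and the Eu--Fu cyclic sieving on the type $A_{n-3}$ cluster complex --- equivalently the $q$-Catalan CSP for rotations of triangulations of the $n$-gon --- for (5).

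For the reflection part I must compute $X(1,-1)$ and match it against a count of reflection-symmetric objects. The first subtlety is that naive substitution gives $0/0$ (already $\{2\}_{q,t}(1,-1)=0$), so for (1),(2) I would use the Pascal-type recurrences for $\binomqt{n}{k}_{q,t}$ --- equivalently the reflection evaluation of the Fibonomial coefficient at $(X,Y)=(0,1)$ --- to get $\binomqt{n}{k}_{q,t}(1,-1)=\binom{(n-1)/2}{\lfloor k/2\rfloor}$, and the analogue for multisubsets. Since a reflection of the odd $n$-gon fixes one vertex and transposes the other $(n-1)/2$ pairs, a symmetric $k$-subset is a choice of $\lfloor k/2\rfloor$ pairs together with the fixed vertex exactly when $k$ is odd, reproducing $\binom{(n-1)/2}{\lfloor k/2\rfloor}$; multisubsets are handled the same way. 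For (3),(4),(5) I must separately (a) compute $X(1,-1)$ from the identities above --- for (5) this is $\Cat_{n-2}(1,-1)=\Cat_{n-2}(-1,1)$, the Carlitz--Riordan $q$-Catalan at $-1$, to which the formula $C_{2j+1}(-1)=(-1)^{j}C_j$ with $j=(n-3)/2$ applies --- and (b) enumerate the symmetric non-crossing partitions of the $n$-gon (a binomial coefficient $\binom{n}{\lfloor n/2\rfloor}$, as in the theory of symmetric / type-$B$ non-crossing partitions), the symmetric ones with a prescribed block count, and the symmetric triangulations (where the triangle on the polygon edge crossed by the axis is forced to have apex at the fixed vertex, so a symmetric triangulation is determined by a triangulation of one congruent half, giving $C_{(n-3)/2}$), and then check that (a) and (b) agree, signs included.

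The main obstacle is the reflection part for (3)--(5): unlike the rotation part it is not a black-box application of an existing sieving result, but requires both the precise polynomial identities needed to extract $X(1,-1)$ past the apparent $0/0$ --- in particular a clean $t=-1$ evaluation of the $q,t$-Catalan --- and honest combinatorial enumerations of reflection-symmetric non-crossing partitions and triangulations of an odd polygon. Everything in the rotation part, once homogeneity is exploited and $n$ being odd makes $2$ invertible modulo $n$, is routine bookkeeping.
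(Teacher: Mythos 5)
This theorem is not proved in the paper at all --- it is quoted from Rao and Suk \cite{rs2017dihedral} as background, so there is no internal proof to compare against; judged against the cited source, your outline is essentially their strategy and is sound. The decomposition for odd $n$ into a rotation part (reduce $X(\omega^\ell,\omega^{-\ell})$ to a univariate $q$-analogue at a root of unity using homogeneity and the invertibility of $2$ mod $n$, then quote the known CSPs, checking the leftover monomial is $1$ wherever the $q$-analogue is nonzero) and a single reflection part (evaluate at $(1,-1)$ past the apparent $0/0$ and match a count of reflection-symmetric objects) is exactly how these five instances are established, and the verifications you flag as outstanding do go through --- for example in (5) the sign works out because $\binom{n-2}{2}=(n-2)\cdot\frac{n-3}{2}$ has the parity of $\frac{n-3}{2}$, so $(-1)^{\binom{n-2}{2}}\cdot(-1)^{(n-3)/2}=1$ and the reflection evaluation equals $C_{(n-3)/2}$, the number of reflection-symmetric triangulations.
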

Another example of dihedral sieving is the action generated by \emph{promotion} and \emph{complementation} of plane partitions using MacMahon product formulas. This result was first shown in \cite{abuzzahabcyclic} and \cite{rhoades2010cyclic}, and is summarized in \cite[Theorem 1.4]{hopkins2019cyclic}.

\subsection{Statement of Results}

As noted in \thmref{thm:prevdisieve}, dihedral sieving for triangulations of an $n$-gon has already been demonstrated in \cite[Theorem 6.1]{rs2017dihedral}, using the $q,t$-Catalan numbers. Our results generalize this in two different directions. First, we consider pentagonalizations, heptagonalizations, and so on (in general, odd-$k$-angulations). 
For this, we will use the $q,t$-Fuss--Catalan numbers (see \defref{def:qtratcat}).

\begin{theorem}
\label{thm:Akangulations} 
Let $X$ be the set of $k$-angulations of an $n$-gon, for $n$ odd and $n \equiv 2\ (\mod k-2)$. Then the pair $(X \ \acton \ I_2(n),\Cat_{n-1,\frac{n-2}{k-2}}(q,t))$ exhibits dihedral sieving.
\end{theorem}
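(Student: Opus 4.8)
The plan is to verify the defining equality of dihedral sieving one element at a time, splitting $I_2(n)$ into rotations $r^\ell$ and reflections $sr^\ell$. A rotation $r^\ell$ acts via $\rho_{\de}$ with eigenvalues $\{\zeta^\ell,\zeta^{-\ell}\}$ where $\zeta=e^{2\pi i/n}$, so in that case the claim reads $|\{x\in X:r^\ell x=x\}| = \Cat_{n-1,\frac{n-2}{k-2}}(\zeta^\ell,\zeta^{-\ell})$, which is precisely the cyclic sieving phenomenon of Eu and Fu for the facets of the generalized cluster complex of type $A$ with Fuss parameter $k-2$, once the two polynomials are matched. Concretely, I would first record the specialization $\Cat_{n-1,\frac{n-2}{k-2}}(q,q^{-1})$ and identify it with the $q$-Fuss--Catalan polynomial appearing in their theorem; by \defref{def:qtratcat} this is a short manipulation of products and quotients of the factors $\{i\}_{q,q^{-1}}$, and the power-of-$q$ normalization built into \defref{def:qtratcat} is exactly what makes the identification clean (generalizing the $k=3$ instance, \thmref{thm:prevdisieve}(5)). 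Substituting $q=\zeta^\ell$ then gives the rotation case verbatim from Eu--Fu.

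For a reflection $g=sr^\ell$, the matrix $\rho_{\de}(g)$ has determinant $-1$ and trace $0$, hence eigenvalues $\{1,-1\}$, so the target value $\Cat_{n-1,\frac{n-2}{k-2}}(1,-1)$ is independent of $\ell$. Correspondingly, since $n$ is odd all reflections of $I_2(n)$ lie in a single conjugacy class, and conjugate group elements have equinumerous fixed-point sets on $X$; it therefore suffices to count the $k$-angulations of the $n$-gon invariant under one fixed reflection $s$ and to show this number equals $\Cat_{n-1,\frac{n-2}{k-2}}(1,-1)$. Note that the hypotheses $n$ odd and $n\equiv 2\pmod{k-2}$ force both $k$ and $m:=\frac{n-2}{k-2}$ to be odd, facts the argument will use.

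The enumeration of $s$-invariant $k$-angulations proceeds by cutting along the axis $L$ of $s$, which passes through a single vertex $v_0$ and the midpoint of the opposite edge $e_0$. Since $L$ meets no other vertex, a short structural argument shows that every cell crossed by $L$ must be fixed by $s$, that an $s$-fixed cell is an isosceles $k$-gon whose axis is $L$ (which needs $k$ odd), and hence that exactly one cell $C$ is crossed: the $k$-gon with apex $v_0$ and base $e_0$. Deleting $C$ leaves $(k-1)/2$ sub-polygons on the upper half-plane bounded by $L$, mirrored below, each independently $k$-angulated; so an $s$-invariant $k$-angulation is the datum of a choice of $C$ together with a $k$-angulation of each upper sub-polygon. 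Translating the side-length congruences into a composition and applying the Fuss--Catalan functional equation $F=1+xF^{\,k-1}$ via Lagrange inversion (equivalently, invoking a known count of symmetric dissections), the number of such configurations comes out to the Raney number $\frac{1}{m}\binom{(k-1)m/2}{(m-1)/2}$. It then remains to show $\Cat_{n-1,m}(1,-1)$ equals this Raney number; this is a genuinely $0/0$ evaluation of a ratio of $q,t$-factorials, which I would resolve exactly as Rao and Suk handle the $q,t$-binomial coefficients at $(1,-1)$: group the factors $\{i\}_{q,t}$ by the parity of $i$, use $\{i\}_{1,-1}=1$ for $i$ odd and $\{i\}_{1,-1}=0$ for $i$ even, and compute the surviving ratios of even-index factors through $\lim_{(q,t)\to(1,-1)}\{2j\}_{q,t}/\{2\}_{q,t}=j$.

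The rotation case is essentially a citation plus a one-line specialization, so the real content --- and the main obstacle --- is the reflection case, specifically reconciling the two computations of $\Cat_{n-1,m}(1,-1)$: the bijective enumeration of symmetric $k$-angulations and the limiting evaluation of the $q,t$-Fuss--Catalan number are carried out by entirely different means, and making their answers visibly coincide requires pinning down the exact Raney parameters on both sides. The delicate points are the structural claim that $L$ crosses exactly one cell (which leans on $n$ odd and $k$ odd) and the careful bookkeeping of the parity cancellations in the $(1,-1)$-limit; once the common value $\frac{1}{m}\binom{(k-1)m/2}{(m-1)/2}$ has been extracted from both, the theorem follows.
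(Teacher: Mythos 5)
Your geometric half is sound and essentially reproduces the paper's argument: the observation that any cell whose interior meets the axis is fixed (the axis is fixed pointwise by $s$), that oddness of $k$ and $n$ forces a unique such cell with apex $v_0$ and base $e_0$, and that the upper half-plane data is counted by $[x^{(m-1)/2}]F(x)^{(k-1)/2}$ with $F=1+xF^{k-1}$, giving $\frac{1}{m}\binom{(k-1)m/2}{(m-1)/2}$, is exactly the count the paper obtains via its coral-diagram bijection in \thmref{thm:reflectioncounting} (a coral diagram of type $(k-1,\tfrac{k-1}{2},\tfrac{m-1}{2})$ is precisely such a forest, and your Lagrange-inversion step is \propref{prop:RaneyRecursion}).

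The genuine gap is in how you evaluate the polynomial. You treat $\Cat_{n-1,m}(q,t)$ as a ratio of $q,t$-factorials, to be specialized at $q=t^{-1}$ by ``a short manipulation of products and quotients'' and at $(1,-1)$ by a Rao--Suk-style $0/0$ limit of the factors $\{i\}_{q,t}$. But \defref{def:qtratcat} defines the rational $q,t$-Catalan number as $\sum_{\gl}q^{\area(\gl)}t^{\area(\sweep(\gl))}$, a sum over $(n-1,m)$-Dyck paths; it is a polynomial with no product formula in both variables, and there is no factorial quotient to take a limit of. Consequently $\Cat_{n-1,m}(1,-1)$ is (since $\sweep$ is a bijection) the signed count of Dyck paths by parity of area, and showing that this signed count equals the Raney number $\frac{1}{m}\binom{(k-1)m/2}{(m-1)/2}$ is the main technical content of the paper: \thmref{thm:dyckdiff}, proved through the recursions and sign-cancellation arguments on the truncated staircases $Y_s(\ell,m)$ in Propositions 3.8--3.11, which match the Raney recurrences of \propref{prop:RaneyRecursion2}. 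Your proposal supplies no substitute for this step; the factorial-limit computation you describe evaluates a different $q,t$-analogue, and its agreement with the Dyck-path polynomial at $(1,-1)$ is precisely what has to be proved, not assumed. The same issue touches your rotation case: the identification of $\Cat_{n-1,m}(q,q^{-1})$ with the $q$-Fuss--Catalan polynomial is not a formal manipulation under this definition; the paper instead quotes the root-of-unity evaluations directly (\propref{prop:catalanatroots}, from Eu--Fu) and separately counts rotation-fixed $k$-angulations (\thmref{thm:rotationcounting}). So the rotation side can be repaired by citation, but the reflection-side evaluation of $\Cat_{n-1,m}(1,-1)$ remains unproven in your outline.
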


Next, we will consider triangulations corresponding to other types. For this we will use the cluster complex for each type as an algebraic interpretation of triangulations. The result below is phrased in this language. Let $X = \Delta(\Phi)$ be a cluster complex, with dihedral action generated by the reflections $\tau_+$ and $\tau_-$, operators defined by S. Fomin and A. Zelevinsky in \cite{fz2003Ysystems}. 
We will use a generalization of the $q,t$-Catalan numbers to arbitrary type (see \defref{def:qtcatW}), which agrees with the usual $q,t$-Catalan numbers in type $A$. 

The following theorem relies on two conjectures of C. Stump~\cite{stump2010qtfcat} about the values of the $q,t$-Catalan numbers for arbitrary type in the specializations $q = t^{-1}$ and $t = 1$. The theorem will hold for any polynomial which has these specializations.

\begin{theorem}\label{thm:alltypesoftriangles}
Assuming two conjectures of C. Stump, the pair $(\Delta(\Phi) \ \acton \ I_2(n),\Cat(\Phi,q,t))$ exhibits dihedral sieving for all odd $n$ and $\Phi$ of type $A,B/C,D,E,F$, or $I$.
\end{theorem}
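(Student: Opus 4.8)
The plan is to verify the dihedral sieving identity $|\{C\in\Delta(\Phi):gC=C\}|=\Cat(\Phi,\lambda_1,\lambda_2)$ separately for each element $g\in I_2(n)$. Write $r=\tau_+\tau_-$ for the rotation and $\omega=e^{2\pi i/n}$. When $n$ is odd there are only three cases, distinguished by the eigenvalues of $\rho_{\de}(g)$: the identity, with eigenvalues $\{1,1\}$; a nontrivial rotation $r^\ell$ with $1\le\ell\le n-1$, with eigenvalues $\{\omega^\ell,\omega^{-\ell}\}$; and a reflection, all $n$ of which are conjugate for $n$ odd and have eigenvalues $\{1,-1\}$. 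Hence $(\Delta(\Phi)\acton I_2(n),\Cat(\Phi,q,t))$ exhibits dihedral sieving if and only if (i) $\Cat(\Phi,1,1)=|\Delta(\Phi)|$; (ii) $\Cat(\Phi,\omega^\ell,\omega^{-\ell})=|\{C:r^\ell C=C\}|$ for $1\le\ell\le n-1$; and (iii) $\Cat(\Phi,1,-1)=|\{C:\tau_+C=C\}|$. Statement (i) is immediate from Stump's $t=1$ conjecture: it identifies $\Cat(\Phi,q,1)$ with a $q$-analogue of the number of clusters, which by Fomin and Zelevinsky is the $W$-Catalan number $\Cat(\Phi)$, so specializing to $q=1$ gives $\Cat(\Phi,1,1)=\Cat(\Phi)=|\Delta(\Phi)|$.

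For (ii) I would combine the cyclic sieving phenomenon of Eu and Fu with Stump's $q=t^{-1}$ conjecture. Eu and Fu's cyclic sieving phenomenon for $\Delta(\Phi)$ under $\langle r\rangle$ shows that $|\{C:r^\ell C=C\}|$ is the value at $q=\omega^\ell$ of the polynomial $\prod_i[h+d_i]_q/[d_i]_q$, where the product runs over the fundamental degrees $d_i$ of $\Phi$. Stump's conjecture states that $\Cat(\Phi,q,q^{-1})=q^{-N}\prod_i[h+d_i]_q/[d_i]_q$ for a constant $N$ depending only on $\Phi$ (in type $A$, $N=\binom{n-2}{2}$), whence $\Cat(\Phi,\omega^\ell,\omega^{-\ell})=\omega^{-N\ell}\,|\{C:r^\ell C=C\}|$. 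What remains is to check that the prefactor $\omega^{-N\ell}$ equals $1$ whenever this fixed-point count is nonzero, and here is where the oddness of $n$ is essential: if $\omega^\ell$ has order $d$, then $d\mid n$ is odd, and a direct evaluation of $\prod_i[h+d_i]_q/[d_i]_q$ at a primitive $d$-th root of unity shows that the count vanishes unless $d\mid N$ as well. In fact the count is nonzero only for $d\in\{1,3\}$ in type $A$ and only for $d=1$ for the types $B/C$, $F_4$, and $I_2$, so in the surviving cases the divisibility is elementary.

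For (iii) I would use the symmetry of $\Cat(\Phi,q,t)$ to rewrite $\Cat(\Phi,1,-1)=\Cat(\Phi,-1,1)$ and then apply Stump's $t=1$ conjecture to express this as a $(\pm 1)$-multiple of the type-$\Phi$ $q$-Catalan polynomial evaluated at $q=-1$. The substantive step is to identify this signed quantity with the number of clusters fixed by a reflection. In type $A$ this is exactly Rao and Suk's analysis of triangulations: an alternating sum of $q^{\mathrm{area}}$ over Dyck paths equals, up to a known sign, the number of reflection-symmetric triangulations of the $n$-gon, and those are readily counted directly, since such a triangulation is forced to contain the triangle joining the fixed vertex to the edge met by the axis and is therefore determined by an arbitrary triangulation of the $(n+1)/2$-gon on one side of it, yielding the Catalan number $C_{(n-3)/2}$. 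For the remaining types I would realize the reflection-fixed clusters through the standard combinatorial models of the cluster complex for the classical types and by direct enumeration for the exceptional type $F_4$ (whose cluster complex has $105$ facets), checking in each case that the count agrees with the value at $(1,-1)$ predicted by Stump's conjecture.

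I expect (iii) to be the main obstacle. Steps (i) and (ii) are essentially formal once the two Stump specializations and the Eu--Fu cyclic sieving phenomenon are granted, together with the elementary arithmetic of $I_2(n)$ for $n$ odd; step (iii), on the other hand, requires an explicit understanding of how a reflection acts on the cluster complex and a matching evaluation of the $q,t$-Catalan polynomial, and I do not see a uniform argument --- a type-by-type verification, with Rao and Suk's work covering type $A$, appears to be necessary.
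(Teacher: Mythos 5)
Your reduction is the same as the paper's: split $I_2(n)$ into the identity, the rotations, and the single conjugacy class of reflections; handle the rotations by combining Eu--Fu's cyclic sieving with Stump's $q=t^{-1}$ specialization; and reduce the reflection case to showing that $\Cat(\Phi,1,-1)=\sum_{I\in J(P)}(-1)^{|I|}$ equals the number of facets of $\Delta(\Phi)$ fixed by $\tau_{+}$ (equivalently $\tau_{-}$). However, there is a genuine gap: the type-by-type verification of this last identity, which you defer with ``checking in each case that the count agrees,'' is precisely the substantive content of the proof, and you do not supply it. For types $B/C$ and $D$ one must show that \emph{no} facet is fixed by $\tau_{\pm}$ (for $B_n$ this uses the forced central diameter and a parity argument about diagonals crossing the axis; for $D_n$ one must additionally rule out fixed facets using the flavor-reversal in the $\tau_{\pm}$-action on flavored diameters), and separately that the signed order-ideal sums over the trapezoid poset $T_{n,2n}$ and the double-triangle poset vanish (the paper does this by an explicit toggling/pairing induction). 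For type $I$ one must show $\tau_{\pm}$ fixes exactly one cluster and that the signed ideal count of the corresponding poset is $\pm 1$, and the types $E_6,E_7,E_8$ --- which your sketch omits entirely from the reflection case --- require explicit computation (the paper verifies them, together with $F_4$, by machine). Without these verifications the reflection case, which you yourself identify as the main obstacle, is not established for any type beyond $A$.

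Two smaller points. First, your step (ii) introduces a prefactor $q^{-N}$ into Stump's $q=t^{-1}$ specialization and then needs the fixed-point counts to vanish unless $d\mid N$; your justification (``nonzero only for $d\in\{1,3\}$ in type $A$ and only for $d=1$ for $B/C$, $F_4$, $I_2$'') silently skips types $D$ and $E$, where it is not obviously true and would itself need proof. The paper avoids this entirely by taking as hypothesis the clean specialization $\Cat(\Phi,q,q^{-1})=\Cat(\Phi,q)$, and explicitly states that the theorem holds for any polynomial with the two assumed specializations, so under the theorem's hypotheses no prefactor analysis is needed. Second, be careful in (iii) not to conflate the $t=1$ specialization $\sum_{I\in J(P)}q^{|I|}$ (the order-ideal/area generating function, which is what appears at $q=-1$) with the product-formula $q$-Catalan $\prod_i[h+e_i+1]_q/[e_i+1]_q$; these agree at $q=1$ but not as polynomials, and only the former gives the reflection count at $q=-1$.
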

We will explain later in \secref{sec:othertypes} that the remaining types $H_3$ and $H_4$ do not admit an odd dihedral group action to obtain sieving.

From a purely enumerative combinatorics point of view, our result stated in \thmref{thm:Akangulations} is a direct generalization of \thmref{thm:prevdisieve}(5) given in \cite{rs2017dihedral} by showing the odd dihedral sieving phenomenon on $k$-angulations and the $q,t$-Fuss--Catalan numbers, since $k$-angulations are direct generalizations of triangulations and the $q,t$-Fuss--Catalan numbers directly generalize the $q,t$-Catalan numbers.

From a broader perspective, it was previously known that $q,t$-Catalan numbers have deep connections with a number of combinatorial settings, such as invariant theory and Hilbert schemes of unordered points in $\C^2$ in \cite{Haiman}, knot theory in \cite{GorskyMazin,GalashinLam}, and rational Cherednik algebras in \cite{GordonGriffeth}. However very little was known about the connections between $q,t$-Catalan numbers and cluster algebras. Our work generalizing the results in \thmref{thm:prevdisieve}(5) further hints towards the existence of a connection between cluster theory and $q,t$-Catalan numbers. 

The paper is organized as follows. First, we will discuss various interpretations of sieving phenomena in \secref{sect:phenom}. In \secref{sec:ccdefs} we will define the cluster complex for any root system, and discuss the realization of the cluster complex for types $B$ and $D$. Then we will proceed to proving \thmref{thm:Akangulations} in \secref{sec:kangA}, and \thmref{thm:alltypesoftriangles} in \secref{sec:othertypes}. Finally, in \secref{sec:evenmore}, we discuss some miscellaneous conjectures and describe possible notions of dihedral sieving for even $n$.

\section{Preliminaries}
In what follows, we lay out definitions and past results which will be relevant in the sections that follow. 

\subsection{Sieving Phenomena}\label{sect:phenom}

There is an equivalent definition of cyclic sieving using representation theory, which we now describe in order to give us access to past results framed in that manner. In order to do so, first we will define the ring of representations.

\begin{definition}[Representation Ring]\label{def:repring}
Let $G$ be a finite group, and let $\irr G$ be the set of isomorphism classes of irreducible finite-dimensional $\mb C$-representations of $G$. The \emph{representation ring} $\rep G$ is the
free abelian group $\mb Z[\irr G]$ having basis elements $[V]$ indexed by $V$ in $\irr G$, and multiplication defined by $[V]\cdot [W]=\sum_{i}
[V_i]$ if  $V\otimes W \cong \bigoplus_i V_i$
with $V_i \in \irr G$.
\end{definition} 

In other words, elements of $\rep G$ are {\it virtual} finite-dimensional representations of $G$, with addition via direct sum, and multiplication induced from the tensor product. 

Let $\rho$ be a one-dimensional representation of $C_n$, which can be specified by choosing a generator of $C_n$ to send to $e^{2\pi i /n}$, as is done in the definition of cyclic sieving. 
\begin{prop}[{\cite[Proposition 2.1]{RSW2004cyclic}}]
A pair $(X \ \acton \ C_n,X(q))$ has the CSP if and only if $X(\rho)$ and $\mb C[X]$ are isomorphic as $C_n$-representations.
\end{prop}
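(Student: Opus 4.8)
The plan is to reduce both sides of the claimed equivalence to an identity of characters. The essential representation-theoretic input is that over $\mb C$ a finite-dimensional representation of a finite group is determined up to isomorphism by its character, and more generally that the character map sending a virtual representation in $\rep G$ to its trace function on $G$ is injective. Specializing to $G = C_n = \langle r\rangle$, two virtual representations coincide if and only if their traces agree at each $r^\ell$, $\ell\in[n]$. So it suffices to show that $\mb C[X]$ and $X(\rho)$ have the same character at every power of $r$ precisely when the CSP identity holds.

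First I would record the character of the permutation representation $\mb C[X]$: it has a basis indexed by $X$ which $r^\ell$ permutes, so the trace of $r^\ell$ equals the number of fixed basis vectors, i.e.\ $\chi_{\mb C[X]}(r^\ell) = |\{x \in X : r^\ell x = x\}|$, which is the left-hand side of the CSP identity. Next I would unwind the meaning of $X(\rho)$ and compute its character. Writing $X(q) = \sum_{j\ge 0} a_j q^j$ with $a_j \in \mb Z$, the expression $X(\rho)$ is obtained by substituting $q\mapsto\rho$, interpreting $q^j$ as $\rho^{\otimes j}$ — the one-dimensional representation sending $r\mapsto e^{2\pi i j/n}$ — so that $X(\rho) = \sum_j a_j\, \rho^{\otimes j}$ in $\rep C_n$. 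Taking the trace at $r^\ell$ gives $\chi_{X(\rho)}(r^\ell) = \sum_j a_j e^{2\pi i j\ell/n} = X(e^{2\pi i \ell/n})$, the right-hand side of the CSP identity. Both quantities depend on $X(q)$ only modulo $q^n-1$, which is consistent since $e^{2\pi i \ell/n}$ is an $n$-th root of unity and $\rho^{\otimes n}$ is trivial.

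Putting these together, $X(\rho) \cong \mb C[X]$ as $C_n$-representations if and only if $\chi_{X(\rho)}(r^\ell) = \chi_{\mb C[X]}(r^\ell)$ for all $\ell\in[n]$, if and only if $|\{x \in X : r^\ell x = x\}| = X(e^{2\pi i \ell/n})$ for all $\ell$, which is exactly the definition of the CSP. I do not anticipate a substantive obstacle: the only point requiring care is the reading of ``isomorphic'', since a priori $X(\rho)$ is merely a virtual representation, so the statement should be understood as an equality in $\rep C_n$, and what makes the argument work is precisely that characters separate elements of $\rep C_n$. In the sieving applications $X(q)$ has nonnegative integer coefficients, so $X(\rho)$ is in fact an honest representation, but this is not needed for the proposition itself.
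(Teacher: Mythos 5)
Your proof is correct and follows exactly the character-theoretic route that the cited source uses and that the paper itself reproduces for the dihedral analogue (\lemref{lem:equivdef}): the trace of $r^\ell$ on $\mb C[X]$ counts fixed points, the trace of $X(\rho)$ at $r^\ell$ equals $X(e^{2\pi i \ell/n})$, and characters separate (virtual) $C_n$-representations. Your closing caveat about reading ``isomorphic'' as equality in $\rep C_n$ when coefficients might be negative is the right one, and nothing further is needed.
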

Here we let $X(\rho)$ be the evaluation of $X(\cdot)$ at $\rho$, direct sums taking the role of addition and tensor powers taking the role of exponentiation, and $\C[X]$ being the $\C$-vector space formally generated by $X$'s elements, with natural action $\C[X] \ \acton \ C_n$ given canonically by $X \ \acton \ C_n$.

This gives an equivalent description of the cyclic sieving phenomenon in the language of representations of $C_n$. Rao and Suk, motivated by the above proposition, defined sieving phenomena for any group along with a
choice of a finite generating set for its representation ring.
\begin{definition}[{\cite[Definition 2.7]{rs2017dihedral}}]\label{def:rsGsieve}
Let $G$ be a group acting on a finite set $X$, and let $\{\rho_1,\ldots,\rho_k\}$ be a generating set for the representation ring $\rep G$. Together with a $k$-variable polynomial $X_{RS}(q_1,\ldots,q_k)$, these form a triple $(X\ \acton \ G,\{\rho_1,\ldots,\rho_k\},X_{RS}(q_1,\ldots,q_k))$ which exhibits \emph{$G$-sieving} if and only if $X_{RS}(\rho_1,\ldots,\rho_k)$ and $\mb C[X]$ are isomorphic as $G$-representations.
\end{definition}

This definition encompasses all known forms of sieving phenomena
after applying \lemref{lem:equivdef}. 
For example, our initial notion of dihedral sieving can be seen as the special case $G = I_2(n)$, with $\rep I_2(n)$ generated by $\rho_{\de}$ and $-\det$, where $\det$ is the determinant of $\rho_{\de}$. We use the subscript ``RS'' to distinguish the polynomials Rao and Suk use from the ones in our alternative definition, which can also describe all previous forms of sieving. 
\begin{definition}[$G$-sieving]\label{def:genSieve}
Suppose a group $G$ acts on finite set $X$, and $X(q_1,\ldots,q_d)$ is a symmetric polynomial in $d$ variables with $\rho$ a $d$-dimensional representation of $G$. The triplet $(X\ \acton \ G,\rho,X(q_1,\ldots,q_d))$ exhibits \emph{$G$-sieving} if and only if for all $g\in G$, if $\lambda_1,\ldots,\lambda_d$ are the eigenvalues of $\rho(g)$, then \[|\{x \in X: gx = x \}| = X(\lambda_1,\ldots,\lambda_d). \]
\end{definition}

Our initial notion of dihedral sieving 
is a special case of our new definition, namely, with $\rho = \rho_{\de}$ the defining representation and $G$ a dihedral group $I_2(n)$ with $n$ odd. 

\begin{lemma}[cf. {\cite[Proposition 4.3]{rs2017dihedral}}]\label{lem:equivdef}
Suppose that $n$ is odd, and that $I_2(n)$ acts on a finite set $X$. Then for any two-variable polynomial $X_{RS}(\cdot,\cdot)$, the $I_2(n)$-representations $\mb C[X]$ and $X_{RS}(\rho_{\de},-\det)$ are isomorphic --- that is, $(X\ \acton\ I_2(n),\{\rho_{\de},-\det\},X_{RS}(q_1,q_2))$ exhibits the DSP in the sense of \defref{def:rsGsieve} --- if and only if $(X\ \acton \ I_2(n),\rho_{\de},X_{RS}(q+t,-qt))$ exhibits the DSP in the sense of \defref{def:genSieve}.
\end{lemma}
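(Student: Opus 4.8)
The plan is to directly compare the two conditions by analyzing what each asserts element-by-element in $I_2(n)$, using the character theory of the dihedral group when $n$ is odd. First I would recall the structure of $\widehat{I_2(n)}$ for odd $n$: there are exactly two one-dimensional representations, the trivial one and $\det = \det\rho_{\de}$ (which is $+1$ on rotations and $-1$ on reflections), together with $\frac{n-1}{2}$ two-dimensional irreducibles $\rho_{\de}^{(1)},\ldots,\rho_{\de}^{(\frac{n-1}{2})}$, where $\rho_{\de}^{(j)}$ sends $r$ to rotation by $2\pi j/n$. The key algebraic fact I would establish (or cite from \cite{rs2017dihedral}) is that $\{\rho_{\de}, -\det\}$ generates $\rep I_2(n)$: indeed $\rho_{\de}^{(j)}$ can be written as an integer polynomial in $[\rho_{\de}]$ via the Chebyshev-type recursion $[\rho_{\de}^{(j+1)}] = [\rho_{\de}]\cdot[\rho_{\de}^{(j)}] - [\det]\cdot[\rho_{\de}^{(j-1)}]$ coming from $\rho_{\de}^{(1)}\otimes\rho_{\de}^{(j)} \cong \rho_{\de}^{(j+1)} \oplus (\det\otimes\rho_{\de}^{(j-1)})$, and $[\det] = (-1)\cdot[-\det]$, so the subring generated by $[\rho_{\de}]$ and $[-\det]$ contains all irreducibles.

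Next I would unwind \defref{def:rsGsieve} for this generating set: $(X\acton I_2(n),\{\rho_{\de},-\det\},X_{RS}(q_1,q_2))$ exhibits the DSP iff $X_{RS}(\rho_{\de},-\det) \cong \mb C[X]$ as virtual $I_2(n)$-representations, which by faithfulness of characters is equivalent to the character identity $\operatorname{tr}\bigl(X_{RS}(\rho_{\de}(g),-\det(g))\bigr) = |\{x\in X: gx=x\}|$ for every $g\in I_2(n)$, where the left side is computed by the usual rule that tensor corresponds to multiplication and direct sum to addition of representations. The central observation is that for each $g$, the two-variable polynomial $X_{RS}$ is being evaluated on the commuting pair of operators $(\rho_{\de}(g), -\det(g))$, and since $\rho_{\de}(g)$ is diagonalizable with eigenvalues $\{\lambda_1,\lambda_2\}$ while $-\det(g)$ is the scalar $-\det(g)$, the trace of $X_{RS}(\rho_{\de}(g),-\det(g))$ equals $X_{RS}(\lambda_1,-\det(g)) + X_{RS}(\lambda_2,-\det(g))$. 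Now I use the defining-representation facts: for a rotation $g=r^\ell$ the eigenvalues are $\lambda_1 = e^{2\pi i\ell/n}$, $\lambda_2 = e^{-2\pi i\ell/n}$, so $\lambda_1\lambda_2 = 1 = \det(g)$ hence $-\det(g) = -\lambda_1\lambda_2$; for a reflection $g$ the eigenvalues are $\{1,-1\}$, so $\lambda_1\lambda_2 = -1 = \det(g)$ and again $-\det(g) = -\lambda_1\lambda_2$. Thus in every case $-\det(g) = -\lambda_1\lambda_2$, and $\operatorname{tr}\bigl(X_{RS}(\rho_{\de}(g),-\det(g))\bigr) = X_{RS}(\lambda_1,-\lambda_1\lambda_2) + X_{RS}(\lambda_2,-\lambda_1\lambda_2)$.

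To finish I would compare this with the evaluation appearing in \defref{def:genSieve} for the polynomial $X_{RS}(q+t,-qt)$: substituting $q=\lambda_1$, $t=\lambda_2$ gives $X_{RS}(\lambda_1+\lambda_2,-\lambda_1\lambda_2)$, a single term, which is manifestly symmetric in $\lambda_1,\lambda_2$ — so the symmetry hypothesis in \defref{def:genSieve} is automatically satisfied. At first glance the two expressions $X_{RS}(\lambda_1,-\lambda_1\lambda_2)+X_{RS}(\lambda_2,-\lambda_1\lambda_2)$ and $X_{RS}(\lambda_1+\lambda_2,-\lambda_1\lambda_2)$ look different, so the crux of the argument — and the step I expect to be the main obstacle — is showing these define the same function of $g$ for the purpose of the sieving equivalence. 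The resolution is that one should not match the polynomials termwise but rather match representations: I would instead argue at the level of $\rep I_2(n)$ directly, showing that the assignment of variables $X\mapsto q+t = [\rho_{\de}]$-trace-data and $Y\mapsto -qt$ corresponds exactly to evaluating on the pair $(\rho_{\de},-\det)$, using that for a $2$-dimensional representation $V$ with $\det V$-twist, $\operatorname{Sym}$ and exterior-power identities give $\chi_{V}\cdot\chi_{V} = \chi_{V\otimes V}$ and the recursion above lets one express any monomial $q^a t^b + q^b t^a$ (the symmetric building blocks) as a virtual combination of the $\rho_{\de}^{(j)}$ and $\det^{\pm}$. Concretely, I would verify that the power-sum-to-elementary-symmetric change of variables intertwines "evaluate symmetric polynomial at $(\lambda_1,\lambda_2)$" with "take character of the corresponding virtual rep built from $\rho_{\de}$ and $-\det$", which is exactly the content of \cite[Proposition 4.3]{rs2017dihedral} that the lemma cites; the odd-$n$ hypothesis enters precisely because it guarantees $\{\rho_{\de},-\det\}$ generates the whole representation ring, so no information is lost. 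Assembling these, both sides of the claimed equivalence reduce to the same family of character identities indexed by $g\in I_2(n)$, completing the proof.
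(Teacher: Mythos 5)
Your central computation rests on a misreading of what $X_{RS}(\rho_{\de},-\det)$ means, and this is exactly where the argument breaks. In \defref{def:rsGsieve} the polynomial is evaluated in the representation ring $\rep I_2(n)$: addition is direct sum and multiplication is tensor product. It is \emph{not} the functional-calculus evaluation of $X_{RS}$ on the commuting matrices $\bigl(\rho_{\de}(g),-\det(g)\bigr)$, so your formula $\tr X_{RS}\bigl(\rho_{\de}(g),-\det(g)\bigr)=X_{RS}(\lambda_1,-\det(g))+X_{RS}(\lambda_2,-\det(g))$ is not the character of the virtual representation in question. For instance, for $X_{RS}(q_1,q_2)=q_1^2$ the representation-ring evaluation is $\rho_{\de}\otimes\rho_{\de}$, whose character at $g$ is $(\lambda_1+\lambda_2)^2$, not $\lambda_1^2+\lambda_2^2$. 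The correct (and much shorter) route, which is the paper's proof, is that taking characters is a ring homomorphism from $\rep I_2(n)$ to class functions, so $\tr\bigl(X_{RS}(\rho_{\de},-\det)(g)\bigr)=X_{RS}\bigl(\tr\rho_{\de}(g),-\tr\det(g)\bigr)=X_{RS}(\lambda_1+\lambda_2,-\lambda_1\lambda_2)$; combined with $|\{x\in X:gx=x\}|=\tr\mb C[X](g)$ and the fact that representations of a finite group are isomorphic iff their characters agree, this is precisely the evaluation of $X_{RS}(q+t,-qt)$ at $(q,t)=(\lambda_1,\lambda_2)$ demanded by \defref{def:genSieve}, and the equivalence follows.

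You do notice the mismatch between $X_{RS}(\lambda_1,-\lambda_1\lambda_2)+X_{RS}(\lambda_2,-\lambda_1\lambda_2)$ and $X_{RS}(\lambda_1+\lambda_2,-\lambda_1\lambda_2)$, and correctly flag it as the crux, but your proposed resolution does not close the gap: the appeal to Chebyshev-type recursions, symmetric-function changes of variables, and ultimately to \cite[Proposition 4.3]{rs2017dihedral} itself amounts to asserting the conclusion rather than deriving it. A smaller point: the classification of irreducibles of $I_2(n)$ and the fact that $\{\rho_{\de},-\det\}$ generates $\rep I_2(n)$ are not steps in the proof of this lemma; generation is only what makes \defref{def:rsGsieve} applicable in the first place (and is where the oddness of $n$ matters), whereas the equivalence itself is a character identity checked at each $g$. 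Replace the operator-evaluation step by the ring-homomorphism property of characters and the proof collapses to a few lines.
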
\begin{proof}
Fix $n$ odd, and a finite set $X$ with an $I_2(n)$ action. Then for all $g \in I_2(n)$, we have $|\{x\in X : gx = x \}| = \tr (\mb C[X](g))$, since the trace counts the number of fixed points of the permutation represention. Next, for a polynomial $X_{RS}$ in two variables, we have $\tr X_{RS}(\rho_{\de},-\det)(g) = X_{RS}(\tr \rho_{\de}(g),-\tr \det(g)) = X_{RS}(\lambda_1 + \lambda_2,-\lambda_1\lambda_2)$, where $\lambda_1,\lambda_2$ are the eigenvalues of $\rho_{\de}(g)$ in some order. Finally, since $\mb C[X] \cong X_{RS}(\rho_{\de},-\det)$ if and only if they have the same trace, the claim follows. 
\end{proof}

With \lemref{lem:equivdef} in hand, we can show instances of dihedral sieving for odd $n$ using our initial notion of dihedral sieving, without needing to consider the representation-theoretic interpretation.

\subsection{Generalizations of Catalan numbers}

\subsubsection{Raney numbers}
The following discussion sets up the notion of {\em Raney numbers}, which provide a convenient intermediate step in establishing our main counting results. 
\begin{definition}[Raney Numbers]
The {\em Raney numbers} are defined as follows: 
\[R_{p,r}(k) \coloneqq \frac{r}{kp+r}\binom{kp+r}{k}.\]
\end{definition}

\begin{remark}
When $r = 1$, the Raney number is exactly the Fuss--Catalan number
\[R_{p,1}(k) = \frac{1}{(p-1)k+1}\binom{kp}{k}.\]
If $r = 1, p = 2$, the Raney number is the usual Catalan number:
\[R_{2,1}(k) = \frac{1}{k+1}\binom{2k}{k}\]
which is the number of triangulations of a ($k+2$)-gon. This is a special case of the Fuss--Catalan numbers counting the number of $(p+1)$-angulations of a ($k+2$)-gon. 
Sometimes the alternate notations $C_p(k)$ and $C_k$ are used for the Fuss--Catalan and Catalan numbers, respectively, but for clarity we shall only use $C_i$ in this paper to denote a cyclic group. 
\end{remark} 

Raney numbers count a combinatorial object called \emph{coral diagrams}. 

\begin{definition}[Coral Diagram]
    A {\em coral diagram of type $(p,r,k)$} is a rooted tree obtained by taking an $r$-star centered at the root (that is, assigning the root $r$ immediate children) and then, $k$ times, choosing a leaf to assign $p$ new children (and the chosen leaf may be one which was added previously in the construction).
\end{definition}
See \figref{fig:coral ex} for an example of a coral diagram of type $(2,4,4)$. 
\begin{figure}
    \centering
\begin{tikzpicture}[scale=1.2]
	\SetFancyGraph
	\Vertex[NoLabel,x=0,y=0]{0}
	\Vertex[NoLabel,x=-1.5,y=1]{1}
	\Vertex[NoLabel,x=-0.5,y=1]{2}
	\Vertex[NoLabel,x=0.5,y=1]{3}
	\Vertex[NoLabel,x=1.5,y=1]{4}
	\Vertex[NoLabel,x=-0.5-0.33,y=2]{5}
	\Vertex[NoLabel,x=-0.5+0.33,y=2]{6}
	\Vertex[NoLabel,x=0.5-0.33,y=2]{7}
	\Vertex[NoLabel,x=0.5+0.33,y=2]{8}
	\Vertex[NoLabel,x=1.5-0.33,y=2]{9}
	\Vertex[NoLabel,x=1.5+0.33,y=2]{10}
	\Vertex[NoLabel,x=0.5,y=3]{11}
	\Vertex[NoLabel,x=0.5+2*0.33,y=3]{12}
	
	\Edges[style={thick}](0,1)
	\Edges[style={thick}](0,2)
	\Edges[style={thick}](0,3)
	\Edges[style={thick}](0,4)
	\Edges[style={thick}](2,5)
	\Edges[style={thick}](2,6)
	\Edges[style={thick}](3,7)
	\Edges[style={thick}](3,8)
	\Edges[style={thick}](4,9)
	\Edges[style={thick}](4,10)
	\Edges[style={thick}](8,11)
	\Edges[style={thick}](8,12)
\end{tikzpicture}
    \caption{An example of a coral diagram of type $(2,4,4)$.}
    \label{fig:coral ex}
\end{figure}

\begin{prop}[{\cite[Theorem 2.5]{beagley2015raney}}]\label{prop:coralRaney}
The Raney number $R_{p,r}(k)$ is equal to the number of coral diagrams of type $(p,r,k)$.
\end{prop}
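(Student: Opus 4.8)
The plan is to turn coral diagrams into ordered forests of plane trees with constant out-degree $p$, and then count those forests. Throughout I read a coral diagram as a \emph{plane} rooted tree — the children of each vertex linearly ordered, as drawn in \figref{fig:coral ex} — and as being determined by the tree it produces, not by the particular sequence of expansions used. Given a coral diagram of type $(p,r,k)$, erase the root to obtain an ordered $r$-tuple $(T_1,\dots,T_r)$, where $T_j$ is the subtree grown below the $j$th child of the root. Since a leaf, once given its $p$ children, is no longer a leaf and hence is never chosen again, every internal vertex of every $T_j$ has out-degree exactly $p$, and the total number of internal vertices of $T_1,\dots,T_r$ equals the number $k$ of expansion steps. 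Conversely, every such $r$-tuple comes from a coral diagram: re-attach a root with $r$ children and perform the expansions recorded by the internal vertices of the $T_j$ in, say, breadth-first order; the result does not depend on this order because a rooted tree is recovered from its vertex set together with its parent map. Hence $\#\{\text{coral diagrams of type }(p,r,k)\}$ equals the number of ordered $r$-tuples of plane trees with all out-degrees equal to $p$ and $k$ internal vertices in total.

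To count these $r$-tuples I would use the cycle lemma. Traverse each $T_j$ in preorder, recording the letter $p-1$ for an internal vertex and $-1$ for a leaf; concatenating gives a word of length $pk+r$ over $\{p-1,-1\}$ with exactly $k$ letters equal to $p-1$ and total sum $-r$, and the $r$-tuples correspond exactly to those words all of whose proper prefixes have sum strictly greater than $-r$ (the first passages of the partial sums through $-1,-2,\dots,-r$ split the word into the preorder words of $T_1,\dots,T_r$). Reversing such a word and negating every letter yields a sequence of $pk+r$ integers, each at most $1$, with sum $r$ and all partial sums positive. By the Dvoretzky--Motzkin cycle lemma, among the $pk+r$ cyclic rotations of any sequence of integers $\le 1$ with sum $r>0$, exactly $r$ have all partial sums positive; since there are $\binom{pk+r}{k}$ words in all (choose where the $k$ copies of $p-1$ go), we conclude
\[
\#\{\text{coral diagrams of type }(p,r,k)\}=\frac{r}{pk+r}\binom{pk+r}{k}=R_{p,r}(k).
\]

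As an alternative to the cycle-lemma step, one can argue with generating functions: with $f(x)=\sum_{k\ge 0}R_{p,1}(k)\,x^k$ the Fuss--Catalan series, one has $f=1+xf^{p}$, the number of coral diagrams of type $(p,r,k)$ is $[x^{k}]f(x)^{r}$, and Lagrange inversion applied to $f-1=x\bigl(1+(f-1)\bigr)^{p}$ gives $[x^{k}]f^{r}=\tfrac{r}{k}\binom{pk+r-1}{k-1}$, which simplifies to $R_{p,r}(k)$. Either way the computations are routine bookkeeping; the one step that needs genuine care is the reduction in the first paragraph — fixing the convention that a coral diagram is a plane tree counted once per final shape, and verifying that deleting and regrowing the root is a well-defined bijection onto the forests of $p$-ary plane trees.
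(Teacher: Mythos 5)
Your proof is correct. Note, though, that the paper never proves this proposition itself: it is imported verbatim as \cite[Theorem 2.5]{beagley2015raney}, so there is no internal argument to compare against --- what you have written is a self-contained replacement for the citation. Your route is the standard one for Raney-type formulas: peel off the root to identify coral diagrams with ordered $r$-tuples of full $p$-ary plane trees having $k$ internal vertices in total, encode the forest by its {\L}ukasiewicz (preorder) word, and apply the Dvoretzky--Motzkin cycle lemma (or, as in your alternative, note that the forest count is $[x^k]f(x)^r$ with $f=1+xf^p$ and invoke Lagrange inversion). Both versions check out, including the identity $\tfrac{r}{k}\binom{pk+r-1}{k-1}=\tfrac{r}{pk+r}\binom{pk+r}{k}$. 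You are also right that the one delicate point is the convention: coral diagrams must be read as plane rooted trees counted by final shape (children ordered, as in \figref{fig:coral ex}), since otherwise the count is strictly smaller than $R_{p,r}(k)$ already for $p=2$, $r=1$, $k=2$; your root-deletion/regrowth bijection and the observation that expanded leaves acquire out-degree exactly $p$ settle that reduction cleanly. If you write this up in full, the only step worth one more line is the standard double count over (word, rotation) pairs that converts ``exactly $r$ good rotations per sequence'' into the global count $\frac{r}{pk+r}\binom{pk+r}{k}$.
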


The following two recursions on Raney numbers will be called upon in the sequel. 

\begin{prop}[{\cite[Theorem 2.6]{hilton1991catalan}}] \label{prop:RaneyRecursion}
Let $p$ be a positive integer and let $r,k$ be nonnegative integers. Then we have
\[R_{p,r}(k) = \sum_{i_1+\cdots+i_r = k}R_{p,1}(i_1)R_{p,1}(i_2)\cdots R_{p,1}(i_r).\]
\end{prop}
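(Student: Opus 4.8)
The plan is to prove the identity bijectively, using the coral-diagram model of Proposition~\ref{prop:coralRaney} so that both sides count the same objects. The first move is to repackage coral diagrams. Every expansion step replaces a leaf by an internal node having exactly $p$ children, and the root of a coral diagram of type $(p,r,k)$ already has $r\ge 1$ children, hence is never itself a leaf; so deleting the root yields an ordered forest $T_1,\dots,T_r$ of $r$ plane trees (rooted at the former children $c_1,\dots,c_r$ of the root) in each of which every internal node has exactly $p$ children, and the total number of internal nodes over all the $T_j$ equals the number $k$ of expansion steps. This operation is reversible: attaching $r$ given such trees to a common new root reconstructs a unique coral diagram of type $(p,r,k')$ where $k'$ is the total count of internal nodes. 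Taking $r=1$ in this correspondence identifies coral diagrams of type $(p,1,i)$ with single plane trees in which every internal node has degree $p$ and there are $i$ internal nodes.

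Now fix the composition $i_1+\cdots+i_r=k$ recording the number of internal nodes contributed by each $T_j$. Choosing the forest $(T_1,\dots,T_r)$ amounts to choosing each $T_j$ independently among plane trees with all internal nodes of degree $p$ and exactly $i_j$ internal nodes, that is, among coral diagrams of type $(p,1,i_j)$, which by Proposition~\ref{prop:coralRaney} number $R_{p,1}(i_j)$. Summing over all compositions and using Proposition~\ref{prop:coralRaney} once more on the left gives exactly $R_{p,r}(k)=\sum_{i_1+\cdots+i_r=k}R_{p,1}(i_1)\cdots R_{p,1}(i_r)$. The boundary case $r=0$ (with the usual conventions $R_{p,0}(0)=1$, $R_{p,0}(k)=0$ for $k\ge 1$) matches the empty product, respectively the empty sum, on the right.

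The point that needs the most care is the repackaging step: one must verify that a coral diagram is determined by the resulting plane tree (rather than by the history of expansions), so that the forest $(T_1,\dots,T_r)$ is well defined and the correspondence is a genuine bijection, and that the single root-edge present in a type-$(p,1,i)$ diagram contributes nothing after deletion. If one prefers an algebraic route, let $B(x)=\sum_{k\ge 0}R_{p,1}(k)x^k$ and recall the Fuss--Catalan functional equation $B(x)=1+x\,B(x)^p$; Lagrange inversion applied to $u=x(1+u)^p$ gives $[x^k]B(x)^r=\tfrac{r}{k}\binom{pk+r-1}{k-1}=R_{p,r}(k)$ for all $r\ge 0$ (the last equality being the routine identity $\tfrac{r}{k}\binom{pk+r-1}{k-1}=\tfrac{r}{pk+r}\binom{pk+r}{k}$), and extracting the coefficient of $x^k$ from $B(x)^r=\bigl(\sum_{k}R_{p,1}(k)x^k\bigr)^r$ yields the stated recursion.
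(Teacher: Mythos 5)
Your proof is correct. Note, though, that the paper does not prove this proposition at all: it is imported verbatim as a citation to Hilton and Pedersen, so there is no internal argument to compare against. Your first route is a genuine, self-contained addition that fits the paper's toolkit well: since a coral diagram of type $(p,r,k)$ is exactly a plane tree whose root has $r$ (ordered) children, whose other internal vertices have exactly $p$ ordered children, and which has $k$ non-root internal vertices --- the diagram being determined by this shape and not by the expansion history, as any such tree arises from expansions performed in, say, breadth-first order --- deleting the root splits it into an ordered forest $(T_1,\dots,T_r)$, and re-attaching a root to a single such tree identifies trees with $i$ internal vertices with coral diagrams of type $(p,1,i)$; summing over the composition $i_1+\cdots+i_r=k$ then gives the identity directly from \propref{prop:coralRaney}, with the degenerate case $r=0$ handled by the stated conventions. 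Your alternative generating-function route ($B=1+xB^p$, Lagrange inversion giving $[x^k]B^r=\tfrac{r}{k}\binom{pk+r-1}{k-1}=R_{p,r}(k)$) is also sound and is essentially the classical argument in the spirit of the cited source. The bijective version is arguably preferable here because it stays entirely within the coral-diagram model the paper already relies on in \thmref{thm:reflectioncounting} and \thmref{thm:rotationcounting}, whereas the algebraic version requires importing the Fuss--Catalan functional equation; just be explicit, as you are, that the trees are plane (children ordered), since otherwise the Raney count is wrong.
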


\begin{prop}[{\cite[Theorem 2.2]{raneynumber}}]\label{prop:RaneyRecursion2}
The Raney numbers satisfy the following recurrences: 
\begin{align*}
R_{p,1}(k) &= \sum\limits_{i=0}^{k-1}R_{p,1}(i)R_{p,p-1}(k-1-i) \\
R_{p,r}(k) &= \sum\limits_{i=0}^kR_{p,r}(i)R_{p,r-1}(k-i) & \text{for $r>1$.}
\end{align*}
\end{prop}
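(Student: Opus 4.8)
The plan is to pass to ordinary generating functions. Set $B_{p,r}(x):=\sum_{k\ge 0}R_{p,r}(k)\,x^{k}$. First I would record the two structural facts that make both recurrences fall out immediately. The first is the Fuss--Catalan functional equation $B_{p,1}(x)=1+x\,B_{p,1}(x)^{p}$: substituting $u=B_{p,1}-1$ turns this into $u=x(1+u)^{p}$, and Lagrange inversion gives $[x^{k}]u=\tfrac1k[u^{k-1}](1+u)^{pk}=\tfrac1k\binom{pk}{k-1}=\frac{1}{(p-1)k+1}\binom{pk}{k}=R_{p,1}(k)$ for $k\ge 1$, which is exactly the closed form of the Fuss--Catalan number recorded in the Remark above; the constant terms also agree since $R_{p,1}(0)=1$. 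The second fact is $B_{p,r}(x)=B_{p,1}(x)^{r}$ for every $r\ge 1$, and this is nothing but Proposition~\ref{prop:RaneyRecursion} read as a statement about generating functions, since $[x^{k}]B_{p,1}(x)^{r}=\sum_{i_{1}+\cdots+i_{r}=k}R_{p,1}(i_{1})\cdots R_{p,1}(i_{r})$. Both facts could instead be extracted from the coral-diagram model of Proposition~\ref{prop:coralRaney}: erasing the unique edge at the root of a type-$(p,1,k)$ diagram, and splitting a type-$(p,r,k)$ diagram into the $r$ rooted subtrees hanging off the root, are bijective incarnations of these two identities.

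With those in hand, each recurrence becomes a single coefficient extraction. For the first recurrence I would note that $\sum_{i=0}^{k-1}R_{p,1}(i)\,R_{p,p-1}(k-1-i)$ is the coefficient of $x^{k}$ in $x\,B_{p,1}(x)\,B_{p,p-1}(x)$; applying $B_{p,p-1}=B_{p,1}^{\,p-1}$ and then the functional equation, this equals $[x^{k}]\bigl(x\,B_{p,1}(x)^{p}\bigr)=[x^{k}]\bigl(B_{p,1}(x)-1\bigr)$, which is $R_{p,1}(k)$ for every $k\ge 1$ (at $k=0$ the right side is an empty sum, so the identity is read for $k\ge 1$). For the second recurrence, for $r\ge 2$ the right side is the coefficient of $x^{k}$ in the convolution of $B_{p,1}$ with $B_{p,r-1}$, namely $B_{p,1}(x)\,B_{p,r-1}(x)=B_{p,1}(x)^{r}=B_{p,r}(x)$, whose $x^{k}$ coefficient is $R_{p,r}(k)$; here there is no index caveat and $k=0$ works as well.

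Since Proposition~\ref{prop:RaneyRecursion} is already available to us, the only genuinely new input is the Fuss--Catalan functional equation for $B_{p,1}$ --- i.e.\ the Lagrange-inversion computation above --- and the rest is bookkeeping with the shift in the first identity and with base cases; I do not expect a real obstacle here. If one instead wants a self-contained bijective proof, the one point requiring care is the root decomposition: in a type-$(p,1,k)$ diagram with $k\ge 1$ the unique child of the root is forced to receive the very first batch of $p$ children, so such a diagram is the same as a type-$(p,p,k-1)$ diagram sitting below the root edge, and peeling off the subtree on the first of those $p$ children --- an arbitrary type-$(p,1,i)$ diagram --- leaves a type-$(p,p-1,k-1-i)$ diagram; the second recurrence is the analogous peeling of the subtree below the first of the root's $r$ children, leaving a type-$(p,r-1,k-i)$ diagram.
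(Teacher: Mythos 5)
The paper does not prove this proposition at all---it is imported verbatim from the cited source---so there is no internal proof to compare against; your generating-function route is a perfectly reasonable standalone derivation. The two ingredients you isolate are correct: $B_{p,r}=B_{p,1}^{\,r}$ is exactly \propref{prop:RaneyRecursion} read as a statement about coefficients, and the Fuss--Catalan functional equation $B_{p,1}=1+xB_{p,1}^{p}$ together with the shift $[x^{k}](xB_{p,1}^{p})=[x^{k}](B_{p,1}-1)$ gives the first recurrence for $k\ge 1$ (your reading that $k=0$ is excluded, the right side being an empty sum, is the intended one). One small point of logical hygiene: as written you quote the functional equation as a ``fact'' and then use Lagrange inversion to match its coefficients with $R_{p,1}(k)$; to make this a proof you should either note that $B=1+xB^{p}$ has a unique formal power series solution with constant term $1$ (so the Lagrange computation identifies that solution with the Raney generating function), or derive the equation directly from the root decomposition of coral diagrams that you sketch at the end. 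Either patch is routine.

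The more substantive remark is that what your argument proves for the second identity is $R_{p,r}(k)=\sum_{i=0}^{k}R_{p,1}(i)\,R_{p,r-1}(k-i)$, i.e.\ the coefficient of $x^{k}$ in $B_{p,1}B_{p,r-1}=B_{p,r}$, whereas the display in the proposition has $R_{p,r}(i)$ in place of $R_{p,1}(i)$. The displayed version is false as an identity: all Raney numbers are positive and $R_{p,r-1}(0)=1$, so the $i=k$ term alone already equals the left-hand side (concretely, $p=r=2$, $k=1$ gives $2\neq 3$). So the statement contains a typo, and your proof establishes the corrected form---which is also precisely the form the paper actually uses later, in the proof of \propref{prop:dslm raney}, where the recurrences for $D_s'$ convolve against $D_s'(1,\cdot)$, i.e.\ against $R_{p,1}$. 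With that reading, both your analytic argument and your bijective peeling of the first subtree off the root are correct proofs of the proposition as intended.
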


\subsubsection{$q,t$-Fuss--Catalan numbers}
The following discussion provides a framework which we use to construct the polynomials in $q$ and $t$ necessary for our proof of the odd dihedral sieving phenomenon. As the name suggests, this is a further generalization from the Fuss--Catalan and $q$-Fuss--Catalan numbers, albeit in a less straightforward manner. The definitions are due in large part to \cite{garsiahaimann1996remarkable}. 

\begin{definition}[Dyck Path]
In a grid $a$ boxes high and $b$ boxes wide, an \emph{$(a,b)$-Dyck path} is a path, entirely going either north (N) or east (E), along the edges in the graph that do not cross below the long diagonal from the bottom-left (southwesternmost) corner to the upper-right (northeasternmost) corner. 
\end{definition}

A Dyck path gives rise to a Young diagram by considering the squares in the grid completely above the path; see, for instance, \figref{fig:tab ex} for a Dyck path drawn in the Young diagram. 
\begin{definition}[Area]
The \emph{area} of an $(a,b)$-Dyck path is the number of full grid squares between the path and the diagonal. Equivalently, it is the number of squares in the Young diagram below the path. We denote it as $\area(\gl)$. 
\end{definition}

In \figref{fig:tab ex}, the path shown has area 11. 

\begin{figure}
     \centering
     \begin{subfigure}[b]{0.53\textwidth}
         \centering
         \includegraphics[width=\textwidth]{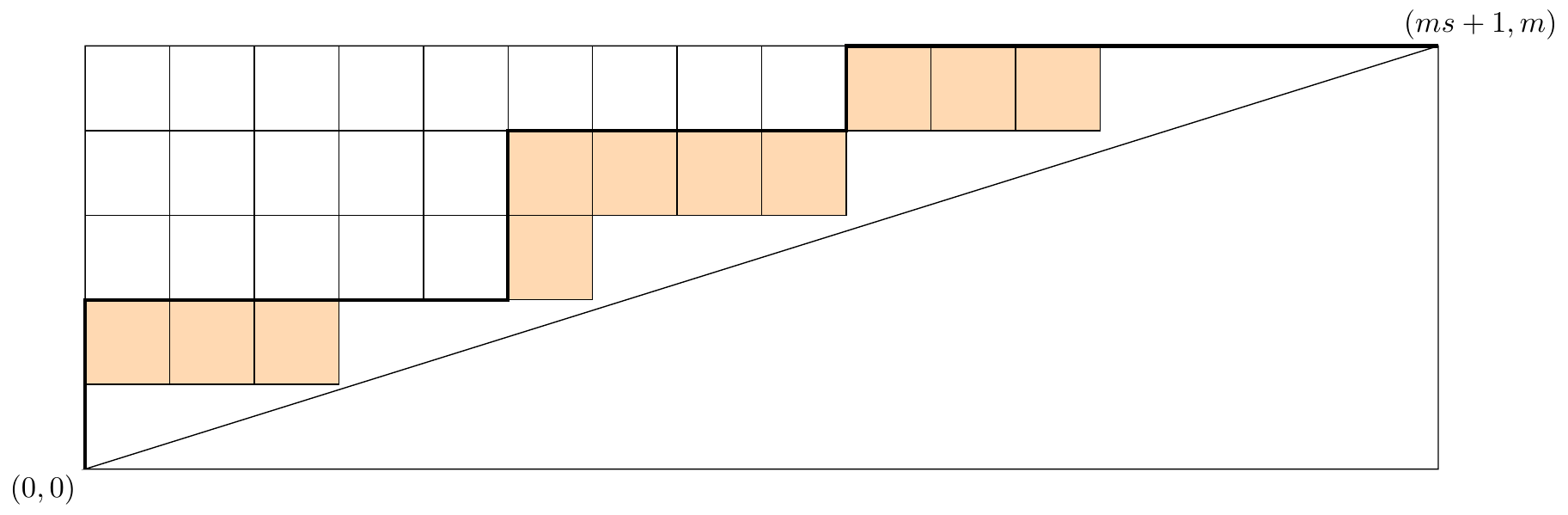}
         \caption{}
         \label{fig:tab ex}
     \end{subfigure}
     \hfill
     \begin{subfigure}[b]{0.44\textwidth}
         \centering
         \includegraphics[width=\textwidth]{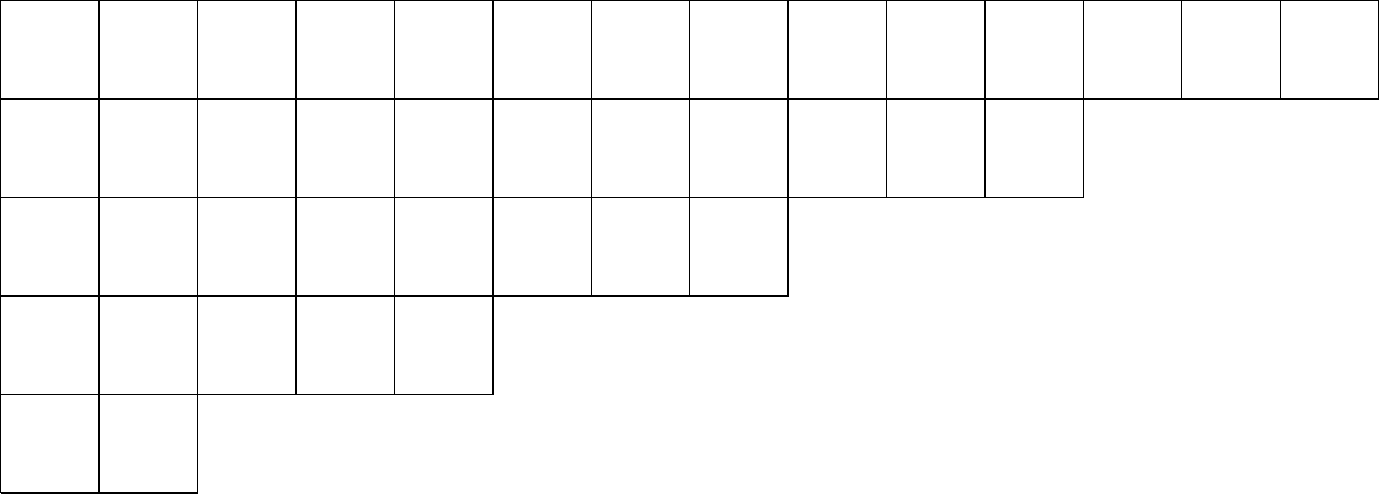}
         \caption{}
         \label{fig:D325}
     \end{subfigure}
        \caption{(a) Young diagram $Y_3(0,5)$ with a Dyck path and corresponding area shown. Note that since $\gcd(ms+1,m)=1$, the diagonal lies strictly below the diagram. (b) Young diagram $Y_3(2,5)$.}
        \label{fig:two young diagrams}
\end{figure}

\begin{definition}[Sweep, cf. \cite{ALW2016ratpf}]
The \emph{sweep} function is a function on $(a,b)$-Dyck paths, defined recursively as follows. Assign the level $\ell_0 = a$ to the first edge in the path $\gl$. Each subsequent edge has level $\ell_i = \ell_{i-1} + a$ if the $i$th step is North and $\ell_i = \ell_{i-1} - b$ if the $i$th step is East. The edges are then sorted by increasing level to obtain a new path in the grid. We denote it by $\sweep(\gl)$. 
\end{definition}

By \cite{ALW2015Sweep, TW2015Sweep}, $\sweep$ is a bijection on $(a,b)$-Dyck paths. 

\begin{definition}[Rational $q,t$-Catalan Numbers]\label{def:qtratcat}
The \emph{rational $q,t$-Catalan numbers} $\Cat_{a,b}(q,t)$ are defined by the following sum over $(a,b)$-Dyck paths $\gl$: 
$$\Cat_{a,b}(q,t) \coloneqq \sum\limits_{\gl}q^{\area(\gl)}t^{\area(\sweep(\gl))}.$$
\end{definition}

Note that when $t=1$, $\sweep(\gl)$ is irrelevant in the evaluation of $\Cat_{a,b}(q,1)$. 

We assume henceforth that $s$ and $m$ are odd positive integers. 

\begin{definition}
Let $Y_s(\ell,m)$ be the Young diagram for the partition $(\ell+\gl_1,\dots,\ell+\gl_m)$ for $\gl_k \coloneqq (m-k)s$. 
\end{definition}

See \figref{fig:D325} for $Y_3(2,5)$. 

This modification of ``regular'' Young diagrams will prove to be a convenient intermediate, inductive object to study later. 

\begin{lemma}
The zero-area Dyck path in the $(ms+1)\times m$ grid corresponds to the Young diagram $Y_s(0,m)$: zero squares on the bottom-most row, $s$ squares on the next, etc., and $(m-1)s$ squares on the top row. 
\end{lemma}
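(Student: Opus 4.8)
The plan is to pin down the area-zero path using coprimality and then read off its shape, row by row.

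First, since $ms+1\equiv 1\pmod m$ we have $\gcd(ms+1,m)=1$, so the main diagonal of the grid — which we take with $m$ rows and $ms+1$ columns, so that the row-lengths claimed in the lemma really are its rows — meets no lattice point in the interior of the rectangle. I would use this to see that there is a \emph{unique} Dyck path of area $0$: a Dyck path has area $0$ iff it leaves no full cell strictly between itself and the diagonal, i.e.\ iff every cell lying below it fails to lie strictly above the diagonal; together with the Dyck condition this forces the path to be the lower boundary $\partial R$ of the set $R$ of unit cells lying weakly above the diagonal, and coprimality guarantees $\partial R$ is a genuine north/east lattice path from corner to corner with no degeneracies. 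Concretely this path is $N\,(E^{s}N)^{m-1}\,E^{s+1}$ — it climbs in $m$ flights of $s$ east-steps, absorbing the extra ``$+1$'' of $ms+1$ into the final flight.

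Next I would count the cells of $R$ in each row. Writing the diagonal as the line $y=\tfrac{m}{ms+1}x$ and numbering the rows $1,\dots,m$ from the bottom, the cell in column $c$ of row $i$ lies weakly above the diagonal exactly when its lower-right corner $(c,i-1)$ does, i.e.\ when $c\le\tfrac{(ms+1)(i-1)}{m}=(i-1)s+\tfrac{i-1}{m}$. Since $0\le\tfrac{i-1}{m}<1$ for $1\le i\le m$, with value $0$ precisely at $i=1$, the positive integers $c$ satisfying this are exactly $c=1,\dots,(i-1)s$, so row $i$ contains precisely $(i-1)s$ cells of $R$. Reading upward from the bottom these counts are $0,s,2s,\dots,(m-1)s$, and being weakly increasing they assemble into the Young diagram of the partition with parts $(m-1)s\ge(m-2)s\ge\cdots\ge s\ge 0$, which is exactly $Y_s(0,m)$. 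Since $R$ is the region above the area-zero path, this proves the lemma.

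I expect the only real point of care to be the off-by-one contributed by the ``$+1$'' in $ms+1$: one has to check that the empty row sits at the bottom (row $1$) and that the fractional term $\tfrac{i-1}{m}$ never nudges a count from $(i-1)s$ up to $(i-1)s+1$. Both follow at once from $0\le\tfrac{i-1}{m}<1$ on the range $1\le i\le m$ together with $\gcd(ms+1,m)=1$ (which also prevents a lattice point of the diagonal from lying on an interior grid line, so that the area-zero path and the counts above it are unambiguous). Everything else is routine.
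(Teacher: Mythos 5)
Your proof is correct and takes essentially the same approach as the paper: the paper's argument is exactly your row-by-row count, computing that the diagonal meets row $i$ at horizontal position $(i-1)s+\tfrac{i-1}{m}$ and using $0\le\tfrac{i-1}{m}<1$ to conclude the row contributes $(i-1)s$ cells. Your additional remarks on uniqueness of the zero-area path via $\gcd(ms+1,m)=1$ are harmless extras, matching what the paper only notes in passing (in the caption of its figure).
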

\begin{proof}
The number of squares on row $i$ (indexing from 0 to $m-1$) is equal to $\floor{n_i}$ where $n_i$ satisfies $\frac{i}{n_i}=\frac{m}{ms+1}$, the slope of the long diagonal, thus $n_i=\lpr{\frac{ms+1}{m}}i=si+\frac{i}{m}$. Since $0\le i<m$, $\floor{n_i}=si$, as desired. \end{proof}

See \figref{fig:tab ex} for an example of the diagram for $m=5$ and $s=7$.

\begin{definition}
$Y_s(\ell,m)$ has the indexing $(i,j,k)$ of its boxes as follows. $1\le i\le m$ tells the component of the partition, equivalently the row in the Young diagram. $0\le j\le m-i$ tells the ``block'' in the row, from the left: either the block of width $\ell$, or one of the blocks of width $s$. $1\le k\le\begin{cases}
\ell & j=0 \\
s & j>0
\end{cases}$ tells the position in the block, from the left. 
\end{definition}
For instance, in \figref{fig:tab ex}, highlighted squares include: $(1,4,2)$, $(2,2,3)$, $(2,3,2)$, $(3,2,3)$, $(4,1,1)$. In \figref{fig:D325} the southwesternmost square is indexed as $(5,0,1)$.

\subsection{Cluster Complexes}\label{sec:ccdefs}

In this section, we will be reviewing some key definitions and facts needed to define sieving for cluster complexes. Then, in \secref{sec:othertypes}, we will be prepared to show the odd dihedral sieving phenomenon for each cluster complex. 

\begin{definition}[Reduced Root System]
A subset $\Phi$ in a Euclidean space $E$ endowed with inner product $(\cdot,\cdot)$ is called a \textit{reduced root system} if the following axioms are satisfied:
\begin{enumerate}
    \item $\Phi$ is finite, spans $E$ and $0\notin \Phi$.
    \item If $\alpha\in \Phi$, then the only multiples of $\alpha$ in $\Phi$ are $\pm \alpha$.
    \item If $\alpha\in \Phi$, the reflection across $\alpha$ defined as $\sigma_\alpha(\beta) = \beta - \frac{2(\beta,\alpha)}{(\alpha, \alpha)}$ leaves $\Phi$ invariant.
\end{enumerate}
\end{definition}

\begin{definition}[Simple Roots]\label{def:rootsystem}
A subset $\Pi$ of a root system $\Phi$ is a set of \textit{simple roots} if:
\begin{enumerate}
    \item $\Pi$ is a basis of $E$.
    \item each root $\beta\in \Phi$ can be written as \begin{equation}\beta = \sum k_\alpha \alpha\tag{$*$}\label{eq:root system expansion}\end{equation} where $\alpha\in \Pi$ and $k_\alpha$ are all nonpositive or all nonnegative. 
\end{enumerate}
\end{definition}

A root system is called \emph{irreducible} if it cannot be partitioned into the union of two proper subsets such that each root in one set is orthogonal to each root in the other. From now on, let $\Phi$ be an irreducible root system of rank $n$. Let $\Phi_{>0}$ denote the set of {\em positive roots} in $\Phi$ with respect to the simple roots $\Pi = \{\alpha_1,\ldots, \alpha_n\}$, i.e. those roots with nonnegative coefficients in the expansion \eqref{eq:root system expansion}. Correspondingly, let $-\Pi= \{-\alpha_1,\ldots, -\alpha_n\}$ denote the set of {\em negative simple roots}. Let $S = \{s_{\alpha_1},\ldots, s_{\alpha_n}\}$ denote the set of simple reflections in $\Aut(E)$ corresponding to reflecting across the simple roots $\alpha_i$. Note that $S$ generates a finite reflection group $W\le\Aut(E)$ that acts on $\Phi$ naturally. Lastly, let $\Phi_{\ge -1} = -\Pi\cup \Phi_{>0}$ denote the set of almost positive roots. Now we proceed to define an action on $\Phi_{\ge -1}$. 

Let $I_+\sqcup I_-$ be a partition of $\Pi$ such that the roots in each of $I_+$ and $I_-$ are mutually orthogonal. Define
\[\tau_{\epsilon}(\alpha) = \begin{cases}
\alpha & \text{if }\alpha = -\alpha_i\text{ for } i\in I_{-\epsilon}\\
\lpr{\prod_{i \in I_\epsilon}s_i}(\alpha) & \text{otherwise}
\end{cases}
\]
for $\epsilon\in \{+,-\}$. 
The product $R = \tau_-\tau_+$ generates a cyclic group $\la R \ra$ that acts on $\Phi_{\ge -1}$. 

Then we can use the map $R$ to define a relation of \textit{compatibility} on $\Phi_{\ge -1}$ by
\begin{enumerate}
    \item $\alpha,\beta\in \Phi_{\ge -1}$ are compatible if and only if $ R(\alpha),R(\beta)$ are compatible;
    \item $-\alpha_i\in -\Pi$ and $\beta\in \Phi_{>0}$ are compatible if and only if the simple root expansion of $\beta$ does not include $\alpha_i$ with a nonzero coefficient.
\end{enumerate}

For any two distinct simple roots $\alpha,\beta\in \Phi_{\ge -1}$, we can repeatedly let $R$ act on them until one is a negative simple root, and check whether the two resulting roots satisfy condition (2) above. Thus the two conditions are sufficient to determine compatibility for any two almost positive roots. Condition (2) is invariant under powers of $R$, so the definition is also consistent; this is stated by Fomin and Reading as \cite[Theorem 2.2]{fr2005generalized}, as a summary of \cite[Section 3.1]{fz2003Ysystems}. 

Now we are ready to define the cluster complex.

\begin{definition}[Cluster Complex]
The {\em cluster complex} $\Delta(\Phi)$ of the root system $\Phi$ as above is the simplicial complex whose faces are the sets of pairwise compatible roots in $\Phi_{\ge -1}$.
\end{definition}
For $\Phi$ in type $A_n, B_n$, or $D_n$, the cluster complex $\Delta(\Phi)$ can be realized as dissections of a regular polygon such that $\la R \ra$ corresponds to a group action on the dissections by rotating the given polygon. We will present the bijections between cluster complexes of these types and dissections of regular polygons in \secref{sec:othertypes}.

\section{Dihedral Sieving on $k$-Angulations of $n$-gons
}\label{sec:kangA}

In this section, we will show the odd dihedral sieving phenomenon on what turn out to be (generalized) clusters of type $A$. In particular, we will prove \thmref{thm:Akangulations}, restated as the following: 

\setcounter{section}{1}
\setcounter{theorem}{1}
\begin{theorem}
Let $X_{s,m}$ with $s$ and $m$ odd be the set of $(s+2)$-angulations of an $n$-gon for $n \coloneqq sm+2$, with the natural dihedral action of $I_2(n)$. Then $(X_{s,m}\ \acton \ I_2(n),\Cat_{sm+1,m}(q,t))$ exhibits dihedral sieving.
\end{theorem}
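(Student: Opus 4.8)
Write $n=sm+2$, $\zeta=e^{2\pi i/n}$, and $X=X_{s,m}$. As $n$ is odd, every element of $I_2(n)$ is either a rotation $r^{\ell}$, with $\rho_{\de}(r^{\ell})$ having eigenvalue multiset $\{\zeta^{\ell},\zeta^{-\ell}\}$, or one of the $n$ mutually conjugate reflections, which $\rho_{\de}$ sends to matrices with eigenvalues $\{1,-1\}$. So by \defref{def:genSieve} it suffices to prove, with $\tau$ a fixed reflection:
\begin{align*}
\mathrm{(I)}\quad & \Cat_{sm+1,m}(1,1)=|X|;\\
\mathrm{(R)}\quad & \Cat_{sm+1,m}(\zeta^{\ell},\zeta^{-\ell})=\bigl|\{x\in X: r^{\ell}x=x\}\bigr|\quad(1\le\ell\le n-1);\\
\mathrm{(S)}\quad & \Cat_{sm+1,m}(1,-1)=\bigl|\{x\in X:\tau x=x\}\bigr|.
\end{align*}
Since $\gcd(sm+1,m)=1$, $\Cat_{sm+1,m}(1,1)$ counts all $(sm+1,m)$-Dyck paths and equals $\frac{1}{(s+1)m+1}\binom{(s+1)m+1}{m}=\frac{1}{sm+1}\binom{(s+1)m}{m}=R_{s+1,1}(m)$, which by \propref{prop:coralRaney} counts coral diagrams of type $(s+1,1,m)$; these are in standard bijection with the $(s+2)$-angulations of the $n$-gon, giving $\mathrm{(I)}$. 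For $\mathrm{(R)}$, specializing $t=q^{-1}$ in \defref{def:qtratcat} yields, up to a monomial factor which is a power of $qt$, the $q$-Fuss--Catalan polynomial of Eu and Fu --- the type-$A$ (hence unconditional) case of the first of Stump's specialization conjectures. That factor equals $1$ whenever $qt=1$, in particular at every pair $\{\zeta^{\ell},\zeta^{-\ell}\}$; since the type-$A$ generalized cluster complex under $\langle R\rangle$ is exactly $X$ under rotation, Eu--Fu's cyclic sieving phenomenon gives $\mathrm{(R)}$.

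All the content is in $\mathrm{(S)}$. Setting $q=1$ in \defref{def:qtratcat} and using that $\sweep$ is a bijection on $(sm+1,m)$-Dyck paths,
\[\Cat_{sm+1,m}(1,-1)=\sum_{\gl}(-1)^{\area(\sweep(\gl))}=\sum_{\gl}(-1)^{\area(\gl)}=\sum_{\mu\subseteq Y_s(0,m)}(-1)^{|\mu|},\]
where the last step uses that the zero-area path corresponds to $Y_s(0,m)$ and that the Dyck-path Young diagrams are precisely its sub-diagrams. I would evaluate this alternating sum over sub-Young-diagrams by induction on $m$ through the auxiliary family $Y_s(\ell,m)$: peeling $Y_s(\ell,m)$ along its block structure (the $(i,j,k)$-indexing of boxes) expresses $\sum_{\mu\subseteq Y_s(\ell,m)}(-1)^{|\mu|}$ as a convolution of the same quantities for smaller parameters, and because $s$ and $m$ are odd the alternating sums over the width-$s$ blocks survive while those over odd-width $\ell$-blocks vanish, turning the convolutions into exactly the recurrences of \propref{prop:RaneyRecursion} and \propref{prop:RaneyRecursion2}. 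Bookkeeping of the base cases then gives
\[\sum_{\mu\subseteq Y_s(0,m)}(-1)^{|\mu|}=R_{s+1,\frac{s+1}{2}}\!\left(\tfrac{m-1}{2}\right)\]
(for $s=1$ this is $R_{2,1}(\tfrac{m-1}{2})$, the relevant Catalan number).

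For the right-hand side of $\mathrm{(S)}$, label the vertices of the $n$-gon so that $\tau$ fixes $v_0$; its axis then exits through the midpoint of the $\tau$-invariant edge $\{v_{(n-1)/2},v_{(n+1)/2}\}$. In a $\tau$-invariant $(s+2)$-angulation the axis meets the interior of exactly one cell $K$ --- any further cell it met would be a $\tau$-invariant $(s+2)$-gon avoiding $v_0$, impossible since a $\tau$-invariant odd polygon must contain the unique $\tau$-fixed vertex $v_0$ --- and $K$ must have $v_0$ as a vertex and $\{v_{(n-1)/2},v_{(n+1)/2}\}$ as a side, so $K=\{v_0,v_{a_1},\dots,v_{a_{(s-1)/2}},v_{(n-1)/2},v_{(n+1)/2},v_{n-a_{(s-1)/2}},\dots,v_{n-a_1}\}$ with $a_i\equiv i\pmod s$. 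Folding across the axis, the angulation is determined by the choice of $K$ together with an arbitrary $(s+2)$-angulation of each of the $\tfrac{s+1}{2}$ gaps of $K$ along one side, with cell-counts summing to $\tfrac{m-1}{2}$; by \propref{prop:RaneyRecursion} this data is counted by $\sum_{i_1+\cdots+i_{(s+1)/2}=(m-1)/2}\prod_j R_{s+1,1}(i_j)=R_{s+1,\frac{s+1}{2}}(\tfrac{m-1}{2})$ (equivalently, by coral diagrams of type $\bigl(s+1,\tfrac{s+1}{2},\tfrac{m-1}{2}\bigr)$ via \propref{prop:coralRaney}), matching the left-hand side. I expect the one genuine obstacle to be the inductive evaluation of the alternating sub-diagram sums of $Y_s(\ell,m)$ --- tracking exactly which sub-diagrams survive the $(-1)^{|\mu|}$ cancellation so as to reproduce the two Raney recurrences with the correct base cases; the folding argument and the identity and rotation cases are comparatively routine once the cited specializations and the structure of $K$ are in hand.
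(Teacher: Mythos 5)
Your plan follows the paper's route almost step for step. Your folding/central-cell argument for the reflection-fixed count is the paper's \thmref{thm:reflectioncounting} (proved there via a bijection with coral diagrams; your convolution $\sum\prod R_{s+1,1}(i_j)$ is the same count by \propref{prop:RaneyRecursion} and \propref{prop:coralRaney}), and your proposed induction over the auxiliary diagrams $Y_s(\ell,m)$, with parity cancellations reproducing the Raney recurrences, is exactly the content of \propref{prop:dslm recur}, \propref{prop:dslm vanish}, \propref{prop:dslm raney} and \thmref{thm:dyckdiff}. The step you defer as ``the one genuine obstacle'' --- setting up the marker-based signed recurrences and matching them with \propref{prop:RaneyRecursion2} --- is precisely where the paper's work lies, so for (S) you have the right plan rather than a completed proof.

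Two technical points need repair. First, in (R): the monomial relating $\Cat_{sm+1,m}(q,q^{-1})$ to the rational $q$-Catalan number is a power of $q$ alone, namely $q^{sm(m-1)/2}$ (for instance $q^{3}\,\Cat_{4,3}(q,q^{-1})$ equals the rational $q$-Catalan number for $(4,3)$), not a power of $qt$, so it does not disappear automatically at $(\zeta^{\ell},\zeta^{-\ell})$. The patch is to note that a nonzero fixed-point count forces the order $d$ of $\zeta^{\ell}$ to divide both $s+2$ and (since $n$ is odd) $m-1$, which kills the phase, while in the vanishing cases the $q$-evaluation is $0$ anyway; alternatively do as the paper does and combine the explicit root-of-unity evaluation \propref{prop:catalanatroots} with the direct count of rotation-fixed angulations in \thmref{thm:rotationcounting}. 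Second, your displayed equality $\sum_{\gl}(-1)^{\area(\gl)}=\sum_{\mu\subseteq Y_s(0,m)}(-1)^{|\mu|}$ is off by $(-1)^{|Y_s(0,m)|}=(-1)^{(m-1)/2}$, because the diagram of a path has $|Y_s(0,m)|-\area(\gl)$ boxes; e.g.\ for $s=1$, $m=3$ one gets $\Cat_{4,3}(1,-1)=-1$ while the reflection fixes exactly one triangulation of the pentagon. This sign has to be tracked through your induction (the paper itself is loose about it, cf.\ its ``up to sign'' remark and the convention used in defining $D_s(\ell,m)$), but it is bookkeeping rather than a conceptual gap.
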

\setcounter{section}{3}
\setcounter{theorem}{0}

We begin by proving general results about counting angulations fixed under certain dihedral group actions, nicely represented using Raney numbers. Then, we show that the relevant specializations of the $q,t$-Catalan numbers coincide with those same Raney number expressions. This completes the task of counting the appropriate fixed clusters using the $q,t$-Catalan numbers. 

\begin{definition}[$k$-angulation]
A \emph{$k$-angulation} of an $n$-gon is a is a subset of the set of interior diagonals of the $n$-gon that dissects the $n$-gon into $k$-gons.
\end{definition}

\begin{figure}
    \centering
    \includegraphics[scale=1.5]{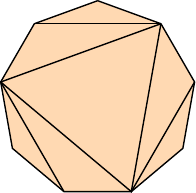} \ \includegraphics[scale=1.5]{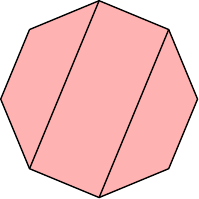} \
    \includegraphics[scale=1.5]{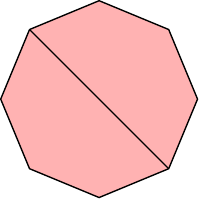} \ \includegraphics[scale=1.5]{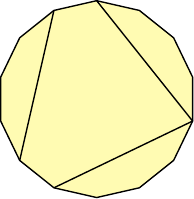}
    \caption{Four examples of $k$-angulations with rotational or reflective symmetry.}
    \label{fig:k-ang example}
\end{figure}

\begin{lemma}
$n$-gons that admit a $k$-angulation satisfy $n\equiv2\pmod{k-2}$. 
\end{lemma}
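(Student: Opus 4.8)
The plan is a straightforward double-counting argument on the regions and edges of the dissection, combined with Euler's formula. Suppose an $n$-gon admits a $k$-angulation using $d$ interior diagonals, cutting it into $f$ regions, each of which is a $k$-gon. First I would record two basic identities. Since inserting one non-crossing diagonal into any dissection splits a single region into two, adding the $d$ diagonals one at a time starting from the whole $n$-gon gives $f = d+1$; equivalently this is Euler's formula $V - E + F = 2$ for the planar graph with $V = n$, $E = n + d$, and $F = f + 1$ (the extra face being the unbounded one, and note a $k$-angulation introduces no interior vertices).

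Next I would count incidences between edges and regions. Each of the $f$ regions is a $k$-gon, hence bounded by exactly $k$ edges; meanwhile each of the $n$ sides of the original $n$-gon lies on exactly one region and each of the $d$ diagonals lies on exactly two. Summing the edge counts over all regions therefore gives $kf = n + 2d$. Combining this with $f = d+1$ eliminates $f$ and yields $(k-2)d = n-k$, so $d = \tfrac{n-k}{k-2}$.

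Since $d$ must be a nonnegative integer, $k-2$ must divide $n - k = (n-2) - (k-2)$, which forces $k - 2 \mid n-2$, i.e. $n \equiv 2 \pmod{k-2}$, as claimed. There is essentially no obstacle; the only point needing a word of care is the identity $f = d+1$, which is immediate from the definition of a $k$-angulation as a set of pairwise non-crossing interior diagonals. I would also note in passing that the same computation gives the stronger conclusion that every $k$-angulation of such an $n$-gon uses exactly $\tfrac{n-k}{k-2}$ diagonals and produces exactly $\tfrac{n-2}{k-2}$ regions, facts that will be convenient in the sequel.
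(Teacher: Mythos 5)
Your argument is correct, and it takes a different route from the paper. The paper disposes of this lemma in one line, by induction on the number of parts of the $k$-angulation: a single part forces $n=k\equiv 2\pmod{k-2}$, and peeling off an ``ear'' $k$-gon (one cut off by a single diagonal) replaces the $n$-gon by an $(n-k+2)$-gon, so each step changes $n$ by a multiple of $k-2$. Your proof instead does a global double count: $f=d+1$ from adding non-crossing diagonals one at a time (or Euler's formula), and $kf=n+2d$ from counting edge--region incidences, which together give $(k-2)d=n-k$ and hence the congruence. The arithmetic content is the same, but your version is non-inductive and yields the stronger quantitative conclusion explicitly, namely $d=\tfrac{n-k}{k-2}$ diagonals and $f=\tfrac{n-2}{k-2}$ regions; the latter is precisely the quantity $m$ that the paper introduces immediately after the lemma (``$m$ is the number of $k$-gons resulting from any $k$-angulation''), so your computation also justifies that interpretation rather than leaving it as a remark. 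The paper's induction is shorter and avoids any appeal to planarity or incidence counting, but either argument is perfectly adequate here; the only point to keep explicit in yours, as you do, is that the diagonals are pairwise non-crossing, which is what makes $f=d+1$ immediate.
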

\begin{proof}
Induction on the number of parts of the $k$-angulation. 
\end{proof}

We will often write $n=m(k-2)+2$, where we may interpret $m$ as the number of $k$-gons resulting from any $k$-angulation of the $n$-gon. We will also frequently use $s \coloneqq k-2$ to keep expressions cleaner. 

\begin{corollary}
For odd $n$ and even $k$, no $n$-gon admits a $k$-angulation. 
\end{corollary}

The dihedral group element $g\in I_2(n)$ acts on a $k$-angulation by sending vertex $i$ to $g(i)$ and correspondingly sends the chord connecting $i$ and $j$ to the chord connecting $g(i)$ and $g(j)$. It is evident that this action sends $k$-angulations to $k$-angulations. In the case of $n$ odd, there is a unique ``type'' of reflection in $I_2(n)$, which has precisely one fixed point and may be viewed geometrically as reflection about the perpendicular bisector of some side of the $n$-gon. This stands in contrast to the case of $n$ even, where the two ``types'' of reflection have zero or two fixed points, as seen in \figref{fig:dihedral ex}. 

\begin{figure}
    \centering
    \includegraphics[scale=1.5]{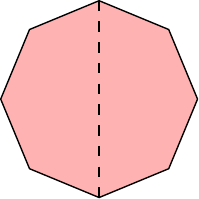} \includegraphics[scale=1.5]{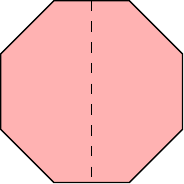} \includegraphics[scale=1.5]{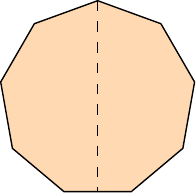}
    \caption{Instances of dihedral group reflections: two (in red) for $n$ even and one (in orange) for $n$ odd.}
    \label{fig:dihedral ex}
\end{figure}

See e.g.~\cite[\S2.1]{eufu2008dissections} for a discussion of the correspondence between $k$-angulations and type A cluster complexes, which respects the dihedral action. From this, we shall now focus on studying $k$-angulations, which immediately gives the desired results about the relevant cluster complexes. 

In the following two results, we explicitly enumerate the fixed points of the dihedral group's action, in terms of Raney numbers. Subsequently we prove that the Raney numbers thus obtained also equal the relevant specializations of the $q,t$-Catalan numbers. 

\begin{theorem}\label{thm:reflectioncounting}
Let $F_k(n)$ be the number of $k$-angulations of an $n$-gon fixed by the reflection across a specific axis for $n$ and $k$ odd. Then
\[F_k(n) = R_{k-1, \frac{k-1}{2}}\left(\frac{m-1}{2}\right)\]
where $m = \frac{n-2}{k-2}$.
\end{theorem}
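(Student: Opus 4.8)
The plan is to set up a bijection between $k$-angulations of an $n$-gon fixed by the given reflection and coral diagrams of the appropriate type, then invoke \propref{prop:coralRaney}. Write $s = k-2$ and $m = \frac{n-2}{s}$, so $n = sm+2$, and recall $n, k$ odd forces $s$ odd and hence $m$ odd. Fix the reflection axis; since $n$ is odd, the axis passes through one vertex $v$ and the midpoint of the opposite side. The first step is to understand the structure of a symmetric $k$-angulation near this axis. I would argue that the $k$-gon containing the opposite side must be symmetric under the reflection, which (since $k$ is odd) forces the axis to pass through a vertex of that $k$-gon — so the axis actually bisects this distinguished $k$-gon, entering through one of its edges (the side opposite $v$) and exiting through the opposite vertex of that $k$-gon.

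\textbf{Reducing to half the polygon.} The key step is that a reflection-symmetric $k$-angulation is determined by its restriction to one side of the axis. Cutting along the axis, the distinguished central $k$-gon is cut into two congruent halves; on one side of the axis we are left with a region bounded by $\frac{n-1}{2}$ of the original polygon's sides, the half of the central $k$-gon's boundary, and the axis itself. I would show this half-region, together with the induced diagonals, is equivalent to a $k$-angulation-type dissection of a polygon with a marked boundary structure. Tracking the count carefully: the central $k$-gon uses up part of the budget, and the remaining $m-1$ copies of $k$-gons are split into two symmetric groups of $\frac{m-1}{2}$ each. So on one side of the axis we have a dissection into $\frac{m-1}{2}$ many $k$-gons plus ``half'' of the central one. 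The natural object counting these is a coral diagram: the dual tree of a $k$-angulation (with $m$ parts) of an $(sm+2)$-gon is exactly a coral diagram of type $(s+1, 1, m-1) = (k-1, 1, m-1)$ under the standard correspondence (each $k$-gon is a node with $k-1 = s+1$ ``slots''), which is why \propref{prop:coralRaney} gives $R_{k-1,1}(m-1)$ unrestricted $k$-angulations — the $r=1$ Fuss--Catalan case. Imposing the reflection symmetry and passing to half the picture should replace the single root of ``valence $1$'' by a root of ``valence $\frac{k-1}{2}$'' (coming from the $\frac{k-1}{2}$ slots of the central $k$-gon lying on one side of the axis), while halving the number of internal branching operations from $m-1$ to $\frac{m-1}{2}$. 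This yields a coral diagram of type $\bigl(k-1,\ \frac{k-1}{2},\ \frac{m-1}{2}\bigr)$, and \propref{prop:coralRaney} then gives $F_k(n) = R_{k-1,\frac{k-1}{2}}\bigl(\frac{m-1}{2}\bigr)$ as desired.

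\textbf{Main obstacle.} The hard part will be making the ``cut along the axis'' argument rigorous: I need to verify that every diagonal of a symmetric $k$-angulation either lies entirely on one side of the axis or is itself the axis-perpendicular chord fixed setwise by the reflection, and that no diagonal crosses the axis transversally in a way that obstructs the split. This follows because any diagonal crossing the axis would map to another diagonal crossing it, and two distinct such diagonals would have to cross each other (impossible in a dissection) — so at most one diagonal crosses the axis, and by the symmetry it must be the one perpendicular to the axis through $v$, which is precisely the chord cutting the central $k$-gon in half. Once this is pinned down, the bijection with coral diagrams of type $(k-1, \frac{k-1}{2}, \frac{m-1}{2})$ is essentially the standard planar-tree-to-dissection duality applied to the half-region, with the root's $r$-star recording how the central $k$-gon's half-boundary meets the rest of the dissection. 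As a sanity check one can verify small cases: for $k=3$ (triangulations), the formula gives $R_{2,1}\bigl(\frac{m-1}{2}\bigr)$, the Catalan number $C_{(m-1)/2}$, which matches the known count of symmetric triangulations of an odd $n$-gon.
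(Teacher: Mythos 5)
Your proposal is correct and follows essentially the same route as the paper: identify the reflection-symmetric $k$-gon containing both the axis-fixed vertex and the bisected edge (possible since $n,k$ are odd), cut along the axis, and biject one half with coral diagrams of type $\bigl(k-1,\frac{k-1}{2},\frac{m-1}{2}\bigr)$ via the dual tree, then apply \propref{prop:coralRaney}. One small correction to your ``obstacle'' paragraph: in fact \emph{no} diagonal of a symmetric dissection crosses the axis at all --- the central $k$-gon is a face of the dissection, so there is no ``chord cutting it in half,'' and a symmetric chord $\{a,-a\}$ would force a reflection-fixed $k$-gon with no vertex on the axis, which is impossible for odd $k$ --- but this does not affect the bijection you describe.
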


\begin{proof}
By~\propref{prop:coralRaney} it suffices to biject with coral diagrams of type $(k-1,\frac{k-1}{2},\frac{m-1}{2})$. In any $k$-angulation, some $k$-gon contains the center of the $n$-gon. This $k$-gon is symmetric with respect to the axis of reflection, and both $n$ and $k$ are odd, so the central $k$-gon includes both the edge and vertex intersected by the axis of reflection. 

Note that the dual of a $k$-angulation is a tree with $m$ vertices, one for each $k$-gon. Add $n-1$ labeled leaves to this tree, one for each edge of the $n$-gon other than the edge bisected by the axis of reflection, each connected to the vertex corresponding to the $k$-gon which uses that edge. If we consider the tree to be rooted at the vertex corresponding to the central $k$-gon, then the vertices of the tree are clearly divided into two identical halves, one for each side of the central $k$-gon. (See \figref{fig:coral overlap} for an example of such a construction.) A symmetric $k$-angulation is completely determined by one such half of the tree. Looking at just one half, we have a tree rooted at a vertex with degree $\frac{k-1}{2}$. Moreover, there are $\frac{m-1}{2}$ non-leaf vertices other than the root, each with degree $k$. Therefore it is a coral diagram of type $(k-1,\frac{k-1}{2},\frac{m-1}{2})$, as desired. All coral diagrams are obtained this way by exactly one symmetric $k$-angulation. 
\end{proof}

\begin{figure}
    \centering
    \begin{tikzpicture}[scale=2.5]
        \coordinate (1) at (-0.361242,0.932472);
        \coordinate (2) at (-0.673696,0.739009);
        \coordinate (3) at (-0.895163,0.445738);
        \coordinate (4) at (-0.995734,0.0922684);
        \coordinate (5) at (-0.961826,-0.273663);
        \coordinate (6) at (-0.798017,-0.602635);
        \coordinate (7) at (-0.526432,-0.850217);
        \coordinate (8) at (-0.18375,-0.982973);
        \coordinate (9) at (0.18375,-0.982973);
        \coordinate (10) at (0.526432,-0.850217);
        \coordinate (11) at (0.798017,-0.602635);
        \coordinate (12) at (0.961826,-0.273663);
        \coordinate (13) at (0.995734,0.0922684);
        \coordinate (14) at (0.895163,0.445738);
        \coordinate (15) at (0.673696,0.739009);
        \coordinate (16) at (0.361242,0.932472);
        \coordinate (0) at (0.,1.);
        
        \draw[thick,fill=orange,fill opacity=0.3] (0) -- (1) -- (2) -- (3) -- (4) -- (5) -- (6) -- (7) -- (8) -- (9) -- (10) -- (11) -- (12) -- (13) -- (14) -- (15) -- (16) -- (0);
        \draw[thick] (0) -- (7) -- (3);
        \draw[thick] (0) -- (10) -- (14);
        \draw[dashed] (0,1) -- (0,-0.982973);
        
	    \SetFancyGraph
    	\Vertex[NoLabel,x=-0.180621,y=0.966236]{0}
    	\Vertex[NoLabel,x=-0.517469,y=0.835741]{1}
    	\Vertex[NoLabel,x=-0.784429,y=0.592374]{2}
    	\Vertex[NoLabel,x=-0.945449,y=0.269003]{3}
    	\Vertex[NoLabel,x=-0.97878,y=-0.0906973]{4}
    	\Vertex[NoLabel,x=-0.879921,y=-0.438149]{5}
    	\Vertex[NoLabel,x=-0.662225,y=-0.726426]{6}
    	\Vertex[NoLabel,x=-0.355091,y=-0.916595]{7}
    	\Vertex[NoLabel,x=0.355091,y=-0.916595]{9}
    	\Vertex[NoLabel,x=0.662225,y=-0.726426]{10}
    	\Vertex[NoLabel,x=0.879921,y=-0.438149]{11}
    	\Vertex[NoLabel,x=0.97878,y=-0.0906973]{12}
    	\Vertex[NoLabel,x=0.945449,y=0.269003]{13}
    	\Vertex[NoLabel,x=0.784429,y=0.592374]{14}
    	\Vertex[NoLabel,x=0.517469,y=0.835741]{15}
    	\Vertex[NoLabel,x=0.180621,y=0.966236]{16}
    	
    	\Vertex[NoLabel,x=0,y=0]{origin}
    	\Vertex[NoLabel,x=-0.473865,y=0.198507]{L1}
    	\Vertex[NoLabel,x=-0.794474,y=-0.226047]{L2}
    	\Vertex[NoLabel,x=0.473865,y=0.198507]{R1}
    	\Vertex[NoLabel,x=0.794474,y=-0.226047]{R2}
    	
    	\Edges[style={gray}](origin,L1)
    	\Edges[style={gray}](origin,R1)
    	\Edges[style={gray}](origin,7)
    	\Edges[style={gray}](origin,9)
    	\Edges[style={gray}](L1,L2)
    	\Edges[style={gray}](L1,0)
    	\Edges[style={gray}](L1,1)
    	\Edges[style={gray}](L1,2)
    	\Edges[style={gray}](L2,3)
    	\Edges[style={gray}](L2,4)
    	\Edges[style={gray}](L2,5)
    	\Edges[style={gray}](L2,6)
    	\Edges[style={gray}](R1,R2)
    	\Edges[style={gray}](R1,14)
    	\Edges[style={gray}](R1,15)
    	\Edges[style={gray}](R1,16)
    	\Edges[style={gray}](R2,10)
    	\Edges[style={gray}](R2,11)
    	\Edges[style={gray}](R2,12)
    	\Edges[style={gray}](R2,13)
	\end{tikzpicture}
    \caption{The coral diagram for a particular symmetric pentagulation of the 17-gon as described in the proof of \thmref{thm:reflectioncounting}.}
    \label{fig:coral overlap}
\end{figure}

Now we proceed to count the number of $k$-angulations of an $n$-gon fixed by a rotation of order $d$. 
Because $k-2$ will be such an important quantity, we will work in terms of $s=k-2$ for the remainder of this section. Recall that $m=\frac{n-2}{k-2}$. 

\begin{theorem}\label{thm:rotationcounting}
Set $s$ and $m$ odd positive integers, both at least 3. Let $T(d,s,m)$ denote the number of $(s+2)$-angulations of an $(sm+2)$-gon which are fixed under a rotation of order $d$, where $d\mid(s+2)$. 
Then 
\[T(d,s,m) = \frac{sm+2}{s+2} R_{s+1,\frac{s+2}{d}}\lpr{\frac{m-1}{d}} = \binom{\frac{m(s+1) + 1}{d} - 1}{\frac{m-1}{d}}.\]
\end{theorem}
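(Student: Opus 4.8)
The plan is to cut a $\rho$-fixed angulation along a distinguished ``central'' cell, record the resulting fundamental-domain data, and recognize the count as a Raney number via \propref{prop:RaneyRecursion}. Since $d\mid(s+2)$ and $s+2$ is odd, $d$ is odd; I assume throughout that $d>1$ (for $d=1$ the rotation is trivial). Fix a rotation $\rho$ of order $d$ about the center $O$ of the regular $n$-gon, $n:=sm+2$, and let $\mathcal A$ be an $(s+2)$-angulation fixed by $\rho$.

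First I would isolate the central cell. Because $n$ is odd, no chord of the $n$-gon is a diameter, so $O$ lies on no diagonal of $\mathcal A$ and hence in the interior of a unique cell $P_0$, which is an $(s+2)$-gon; as $\rho$ permutes cells and fixes $O$ we get $\rho(P_0)=P_0$. The essential rigidity statement --- the one place $d>1$ is genuinely used --- is the converse: any $\rho$-invariant convex cell $Q$ has $\rho$-fixed centroid, so its centroid equals $O$ and $O$ lies in $\operatorname{int}(Q)$; thus $P_0$ is the \emph{unique} $\rho$-invariant cell of $\mathcal A$. Moreover every nontrivial power of $\rho$ has odd order, hence fixes no chord, so $\rho$ acts freely on the $s+2$ sides of $P_0$, rotating the cyclic index by some $t$ with $\gcd(s+2,t)=(s+2)/d$; writing $t=c(s+2)/d$ forces $\gcd(c,d)=1$, so $\langle t\rangle=\langle(s+2)/d\rangle$ in $\mathbb{Z}/(s+2)$.

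Next, the encoding. The $s+2$ sides of $P_0$ cut the boundary of the $n$-gon into arcs of sizes $a_1,\dots,a_{s+2}$ in cyclic order, $\sum a_i=n$; the portion of $\mathcal A$ between side $i$ of $P_0$ and its arc is an $(s+2)$-angulation of an $(a_i{+}1)$-gon, so $a_i=s b_i+1$ with $b_i\ge0$ and, counting cells, $\sum b_i=m-1$. Since $\rho$ shifts the index by $t$ and $\langle t\rangle=\langle(s+2)/d\rangle$, both $(b_i)$ and the sub-angulations along the sides are $\tfrac{s+2}{d}$-periodic. Hence a pair $(\mathcal A,\sigma)$, $\sigma$ a side of $P_0$, is encoded by: a block $(b_1,\dots,b_{(s+2)/d})$ of nonnegative integers with $b_1+\dots+b_{(s+2)/d}=\tfrac{m-1}{d}$; for each $j$ one of the $R_{s+1,1}(b_j)$ many $(s+2)$-angulations of an $(s b_j{+}2)$-gon (the number of $(s+2)$-angulations of an $(sb{+}2)$-gon is the Fuss--Catalan number $R_{s+1,1}(b)$); and the boundary edge $e$ of the $n$-gon at which the arc of $\sigma$ begins. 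I claim this is a bijection onto all such triples: given a triple, lay out arcs of sizes $s b_i+1$ (using the $d$-periodic extension of the block) starting at $e$, which closes up since $\sum(s b_i+1)=n$ and produces an $(s+2)$-gon $P_0'$ with $\rho(P_0')=P_0'$ (its $\tfrac{s+2}{d}$-th vertex is $\rho$ of its first, as the arcs in one period sum to $n/d$); by the uniqueness above $P_0'$ is the central cell of the reconstructed angulation, and the two maps are mutually inverse. Since $P_0$ has $s+2$ distinct sides and $e$ ranges over all $n$ edges, this gives
\[(s+2)\,T(d,s,m)\;=\;n\!\!\sum_{b_1+\dots+b_{(s+2)/d}=(m-1)/d}\ \prod_{j=1}^{(s+2)/d}R_{s+1,1}(b_j)\;=\;n\,R_{s+1,\frac{s+2}{d}}\!\Big(\tfrac{m-1}{d}\Big),\]
the last step by \propref{prop:RaneyRecursion}; dividing by $s+2$ yields the first asserted formula.

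Finally, the second equality is routine algebra from $R_{p,r}(k)=\tfrac{r}{kp+r}\binom{kp+r}{k}$: with $p=s+1$, $r=\tfrac{s+2}{d}$, $k=\tfrac{m-1}{d}$ one computes $kp+r=\tfrac{m(s+1)+1}{d}$ and $kp+r-k=\tfrac{ms+2}{d}$, so $\tfrac{sm+2}{s+2}\cdot\tfrac{r}{kp+r}=\tfrac{ms+2}{m(s+1)+1}=\tfrac{(kp+r)-k}{kp+r}$, and hence $\tfrac{sm+2}{s+2}R_{s+1,\frac{s+2}{d}}\!\big(\tfrac{m-1}{d}\big)=\tfrac{(kp+r)-k}{kp+r}\binom{kp+r}{k}=\binom{kp+r-1}{k}=\binom{\frac{m(s+1)+1}{d}-1}{\frac{m-1}{d}}$. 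The step I expect to be the real obstacle is the rigidity lemma above --- uniqueness of the $\rho$-invariant cell, and the fact that any valid local reconstruction reproduces it; without it the ``choose the edge $e$'' bookkeeping overcounts (indeed it fails precisely at $d=1$). The $\gcd$-periodicity observation and the side/edge bookkeeping are then straightforward.
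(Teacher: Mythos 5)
Your proof is correct and takes essentially the same route as the paper: isolate the (unique) rotation-invariant central $(s+2)$-gon, cut down to a fundamental domain of the rotation, and recognize the resulting count as $R_{s+1,\frac{s+2}{d}}\bigl(\frac{m-1}{d}\bigr)$, then divide out the anchoring. The only differences are bookkeeping: you identify the Raney number through the convolution identity of Proposition~\ref{prop:RaneyRecursion} applied to a Fuss--Catalan-weighted composition of $\frac{m-1}{d}$, anchored by a marked side and boundary edge, while the paper marks a vertex of the central cell (the $\frac{s+2}{sm+2}$ proportion) and bijects the fundamental-domain data with coral diagrams via Proposition~\ref{prop:coralRaney}; you additionally spell out the centroid/uniqueness argument, the $d>1$ caveat, and the final binomial-coefficient algebra, which the paper leaves implicit.
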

\begin{proof}

In any $(s+2)$-angulation, some $(s+2)$-gon contains the center of the $(sm+2)$-gon. Label the vertices of the polygon $\{1,\ldots, sm+2\}$ in clockwise order. In exactly $\frac{s+2}{sm+2}$ of rotationally symmetric $(s+2)$-angulations, the vertex labeled $sm+2$ is part of the central $(s+2)$-gon. Therefore by~\propref{prop:coralRaney}, it suffices to put the rotationally symmetric $(s+2)$-angulations where the central $(s+2)$-gon uses the vertex $sm+2$ in correspondence with coral diagrams of type $(s+1,\frac{s+2}{d},\frac{m-1}{d})$. Since the central $(s+2)$-gon has $d$-fold rotational symmetry, it also uses the vertex labeled $\frac{sm+2}{d}$, and has $\frac{s+2}{d}$ edges connecting $sm+2$ and $\frac{sm+2}{d}$. The entire $(s+2)$-angulation is determined by the $(s+2)$-gons incident to the vertices between $sm+2$ and $\frac{sm+2}{d}$.

Similar to the previous proof, we consider the dual of the $(s+2)$-angulation. Looking at just the tree rooted at the central $(s+2)$-gon with the $\frac{m-1}{d}$ other vertices corresponding to $(s+2)$-gons under the central $(s+2)$-gon in the interval betwen the vertices labeled $sm+2$ and $\frac{sm+2}{d}$, we see that the root has degree $\frac{s+2}{d}$. As in the proof of \thmref{thm:reflectioncounting}, add leaves with labels corresponding to the $\frac{sm+2}{d}$ outer edges of the polygon in this interval, so that each vertex corresponding to a non-central $(s+2)$-gon has degree $s+2$. Then we have constructed a type $(s+1,\frac{s+2}{d},\frac{m-1}{d})$ coral diagram which completely determines the $k$-angulation. Moreover, any coral diagram can be constructed this way.
\end{proof}

We now move to a new result regarding the parities of areas in a class of $(a,b)$-Dyck paths. 

\begin{theorem}\label{thm:dyckdiff}
$D_s(m)$, defined as the difference between the number of even-area and odd-area $(ms+1,m)$-Dyck paths, is equal to $$R_{s+1,\frac{s+1}{2}}\lpr{\frac{m-1}{2}}=\frac{1}{m}\binom{\frac{(s+1)m}{2}}{\frac{m-1}{2}}.$$
\end{theorem}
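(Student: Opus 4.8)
The plan is to evaluate $D_s(m)=\Cat_{ms+1,m}(-1,1)=\sum_{\lambda}(-1)^{\area(\lambda)}$ by passing to Young diagrams. Under the dictionary recalled above, an $(ms+1,m)$-Dyck path $\lambda$ is recorded by the sub-diagram of $Y_s(0,m)$ it cuts out, and its area is, up to the overall constant $|Y_s(0,m)|$, the number of cells of that sub-diagram; one first checks that this constant's parity is such that
\[
D_s(m)\;=\;\sum_{\mu\subseteq Y_s(0,m)}(-1)^{|\mu|}.
\]
I would then introduce the two-parameter refinement $P(\ell,m):=\sum_{\mu\subseteq Y_s(\ell,m)}(-1)^{|\mu|}$, so $D_s(m)=P(0,m)$; the parameter $\ell$ — the length of the shortest row of $Y_s(\ell,m)$ — is exactly the slack needed to run an induction, which is why the diagrams $Y_s(\ell,m)$ were set up.

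The engine is a recursion obtained by peeling the shortest row. If the last part is $\mu_m=j$ (so $0\le j\le\ell$) and one subtracts $j$ from each of the other $m-1$ rows, a direct check against the definition of $Y_s(\ell,m)$ shows the residual shape is an arbitrary sub-diagram of $Y_s(\ell+s-j,\,m-1)$ and that $|\mu|$ drops by exactly $mj$. This yields
\[
P(\ell,m)\;=\;\sum_{j=0}^{\ell}(-1)^{mj}\,P(\ell+s-j,\,m-1),\qquad P(\ell,1)=\tfrac{1+(-1)^{\ell}}{2}.
\]
For $m$ even the sign $(-1)^{mj}$ is trivial, and for $m$ odd it equals $(-1)^{j}$; composing the signed step $m-1\to m$ with the unsigned step $m-2\to m-1$, the inner (unsigned) sum collapses and the outer alternating sum turns each geometric progression into an indicator, giving — for odd $m$ — a clean identity expressing $P(\ell,m)$ as a sum of $P(u,m-2)$ over an explicit interval of values $u$, the summands vanishing whenever $u$ is odd.

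With this in hand I would prove by induction on odd $m$ that
\[
P(\ell,m)\;=\;\tfrac{1+(-1)^{\ell}}{2}\cdot R_{s+1,\,\frac{\ell+s+1}{2}}\!\Bigl(\tfrac{m-1}{2}\Bigr),
\]
the base case $m=1$ being $R_{p,r}(0)=1$. Substituting the inductive hypothesis into the two-step recursion and re-indexing, the inductive step reduces to the Raney-number identity
\[
R_{p,r}(K)\;=\;\sum_{j=p}^{p+r-1}R_{p,j}(K-1)\qquad(K\ge 1,\ p=s+1),
\]
which follows from \propref{prop:RaneyRecursion} and \propref{prop:RaneyRecursion2}: the convolution rule $R_{p,r}=R_{p,1}\ast R_{p,r-1}$ together with $R_{p,1}(a)=R_{p,p}(a-1)$ for $a\ge 1$ (itself a reading of the first identity in \propref{prop:RaneyRecursion2} as a convolution) give $R_{p,r}(K)=R_{p,r-1}(K)+R_{p,p+r-1}(K-1)$, and telescoping in $r$ finishes it. Setting $\ell=0$ gives $D_s(m)=P(0,m)=R_{s+1,\frac{s+1}{2}}(\frac{m-1}{2})$, and the binomial form in the theorem is just the definition of this Raney number after simplifying the denominator $\frac{m-1}{2}(s+1)+\frac{s+1}{2}=\frac{(s+1)m}{2}$.

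The hard part is entirely the bookkeeping of the middle two paragraphs: composing two layers of the sign-dependent recursion without error (the parity of $m$ genuinely matters, and the intermediate quantities $P(\cdot,\text{even})$ are not Raney numbers), extracting the correct combinatorial identity, and recognizing it as a consequence of the stated Raney recursions; and, at the outset, verifying that the area–Young-diagram translation really produces the signed count above with no stray sign.
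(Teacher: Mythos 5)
Your approach is correct in substance but genuinely different from the paper's. The paper gets at $D_s(\ell,m)$ by placing markers inside $Y_s(\ell,m)$ (following Reiner--Tenner--Yong) and summing over the southwesternmost marker above the path; this produces two intertwined recurrences, a separate vanishing lemma for odd $\ell$ and odd $m$, and then an identification of the even-argument values with the Raney recurrences of \propref{prop:RaneyRecursion2}, finishing via $D_s(0,m)=D_s(s,m-1)$ and an appeal to Rao--Suk for $s=1$ (the marker argument is only run for $s>3$). You instead peel the smallest row: conditioning on $\mu_m=j$ and subtracting $j$ from the remaining rows does leave an arbitrary sub-diagram of $Y_s(\ell+s-j,m-1)$ and removes exactly $mj$ cells, so your recursion $P(\ell,m)=\sum_{j=0}^{\ell}(-1)^{mj}P(\ell+s-j,m-1)$ is valid. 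Composing two steps for odd $m$, the coefficient of $P(u,m-2)$ comes out to $\tfrac{1+(-1)^{\ell}}{2}$ for $s\le u\le 2s$ and to the parity indicator of $u\equiv\ell\pmod 2$ for $2s<u\le 2s+\ell$; together with the inductive vanishing at odd $u$ this reduces the induction exactly to your identity $R_{p,r}(K)=\sum_{j=p}^{p+r-1}R_{p,j}(K-1)$, and your derivation of it from $R_{p,r}=R_{p,1}\ast R_{p,r-1}$ and $R_{p,1}(a)=R_{p,p}(a-1)$ is sound. What your route buys: one recursion and one Raney identity instead of three lemmas, a single induction on odd $m$, and uniform treatment of all odd $s\ge 1$ (including $s=1$ and $s=3$, which the paper's recurrences exclude). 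What the paper's route buys: the intermediate closed form $D_s(\ell,2k)=R_{s+1,(\ell+1)/2}(k)$ for every odd $\ell$.

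One caveat, at the very first step: the claim that the constant $|Y_s(0,m)|=s\binom{m}{2}$ has even parity is false when $m\equiv 3\pmod 4$. For instance with $s=1$, $m=3$ the five $(4,3)$-Dyck paths have areas $0,1,1,2,3$, so the even-minus-odd count is $-1$, whereas $R_{2,1}(1)=1$. Thus what your argument actually proves is the co-area statement $\sum_{\mu\subseteq Y_s(0,m)}(-1)^{|\mu|}=R_{s+1,\frac{s+1}{2}}\bigl(\frac{m-1}{2}\bigr)$, i.e.\ the theorem only up to the global sign $(-1)^{s\binom{m}{2}}$. This is not a defect relative to the paper: its marker recursions weight a path by the parity of the cells \emph{above} it, so its proof establishes exactly the same signed count, and the literal statement (and its use with $\Cat_{sm+1,m}(1,-1)$) carries the same sign slip; but if you write your argument up, state explicitly which of the two signed counts you compute and carry the factor $(-1)^{s\binom{m}{2}}$ when translating to genuine Dyck-path area.
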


We shall first prove a sequence of results about $D_s(\ell,m)$, defined as the analogous quantity for paths in $Y_s(\ell,m)$. We begin with a pair of recurrences, and massage these in \propref{prop:dslm vanish} and \propref{prop:dslm raney} to obtain a recurrence for the Raney numbers. 

\begin{prop}\label{prop:dslm recur}
For odd $\ell$ and odd $s>3$, $D_s(\ell,m)$ satisfies the recurrences
\begin{align}
D_s(1,m) &= \sum\limits_{y=0}^{m-2}(-1)^{y+1}D_s(1,y)D_s(2s-1,m-(y+2))\label{dslm base}\\ 
D_s(\ell,m) &= \sum\limits_{y=0}^m(-1)^{(m+1)y}D_s(\ell-2,y)D_s(1,m-y)\label{dslm inductive}
\end{align}
along with the base case $D_s(\ell,0)=1$. 
\end{prop}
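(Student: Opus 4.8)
The plan is to derive both recurrences combinatorially by conditioning on the location where the Dyck path in $Y_s(\ell,m)$ first returns to (or first touches) the diagonal, i.e.\ by decomposing each path into an irreducible initial arch plus a smaller path. Concretely, for an $(a,b)$-type path inside the staircase shape $Y_s(\ell,m)$, the first time the path uses its ``diagonal contact'' splits the grid into a northwest sub-block and a southeast sub-block, each of which is itself a scaled copy of a shape of the form $Y_s(\ell',m')$. Since $D_s(\ell,m)$ is a signed count (even-area minus odd-area), and area is additive under this decomposition, the generating identity $D = \sum D' \cdot D''$ becomes a convolution; the sign $(-1)^{\text{stuff}}$ in front of each term records the parity of the area contributed by the ``connecting'' cells of the arch itself (the cells that lie in neither sub-block). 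The factor $(-1)^{y+1}$ in \eqref{dslm base} and $(-1)^{(m+1)y}$ in \eqref{dslm inductive} should come out of counting exactly these connector cells: in the $\ell=1$ base case the first block has width-1 leftmost part so the arch over $y$ rows forces an area contribution of parity $y+1$, while in the inductive step peeling off the last $\ell-2$ columns versus the width-$s$ blocks changes each of the $y$ rows' contribution by an odd amount, giving parity $(m+1)y$ once $m$'s parity is accounted for. The base case $D_s(\ell,0)=1$ is immediate since the empty shape has a unique (empty) path of area $0$.

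The key steps, in order: (1) fix the indexing $(i,j,k)$ of boxes in $Y_s(\ell,m)$ already set up in the preliminaries, and describe precisely which lattice path corresponds to the zero-area path and how a general path is obtained by ``pushing boxes below the path''; (2) identify the canonical decomposition of a path according to its first contact with the bounding diagonal of $Y_s(\ell,m)$, and check that each of the two resulting pieces is a genuine Dyck path in a shape $Y_s(\ell',m')$ for appropriate smaller parameters — this requires verifying that the diagonal of the sub-shape is the restriction of the original diagonal, which uses the slope computation from the lemma on the zero-area path; (3) track the area: $\area = \area' + \area'' + (\text{connector cells})$, and compute the parity of the connector-cell count as a function of the split point $y$; (4) sum over $y$, pulling the sign out of each term, to land on exactly \eqref{dslm base} and \eqref{dslm inductive}. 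For \eqref{dslm base} one further needs that the ``remainder'' shape after removing a width-$1$ block and its arch is of the form $Y_s(2s-1,\cdot)$, which is a direct bookkeeping check on block widths ($s + s - 1 = 2s-1$ after the arch absorbs one unit).

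The main obstacle I expect is step (3): correctly identifying the connector cells and nailing their parity, especially reconciling the two different-looking signs $(-1)^{y+1}$ and $(-1)^{(m+1)y}$ as instances of the same phenomenon under the two different ways of peeling (first block vs.\ last two columns). There is a real risk of off-by-one errors in the area count because $ms+1$ is coprime to $m$, so the diagonal passes strictly below the diagram (as noted in \figref{fig:two young diagrams}), and one must be careful that no path cell is accidentally double-counted or omitted at the corner where the two sub-blocks meet. A secondary subtlety is the hypothesis ``$s > 3$'': presumably for $s=3$ one of the sub-shapes degenerates (e.g.\ $2s-1 = 5 \not> \ell$ in the relevant range, or a block width collapses), so I would isolate where $s>3$ is actually used in the decomposition and flag that the $s=3$ case — which corresponds to pentagulations — is handled separately or follows from \thmref{thm:reflectioncounting} and \thmref{thm:rotationcounting} directly. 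Once the decomposition and area-parity bookkeeping are pinned down, summing over the split point is routine and the recurrences fall out.
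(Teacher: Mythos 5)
Your outline defers exactly the content of the proof, and the mechanism you propose does not match either the geometry or the form of the recurrences. For $\ell>0$ the region $Y_s(\ell,m)$ is bounded by the staircase of the partition, not by a diagonal, so ``first contact with the bounding diagonal'' is not defined there; and even a first-contact-with-the-boundary decomposition would naturally pair an arch piece with a remainder of the \emph{same} shape type, giving terms like $D_s(\cdot,y)D_s(\ell,m-1-y)$ rather than the pairs $D_s(\ell-2,y)D_s(1,m-y)$ and $D_s(1,y)D_s(2s-1,m-(y+2))$ you must produce: the shift $\ell\mapsto\ell-2$ signals peeling two columns, not a first return. The paper argues differently (following the marker technique of Reiner--Tenner--Yong): one places markers in designated boxes of $Y_s(\ell,m)$ --- for \eqref{dslm inductive} in the boxes $(m-i,i,s-1)$ plus one marker in the $\ell$-block, each with one empty box to its right; for \eqref{dslm base} in boxes with $2s-1$ empty boxes to the right --- and sums over the \emph{southwesternmost} marker lying above the path. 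Fixing that marker forces an entire rectangle, of size $(m+1-y)\times(\ell+sy-1)$, resp.\ $(m-y)\times(2+(y-1)s)$, to sit above the path, and the parity of that rectangle (simplified using the parities of $s$, $\ell$, $m$) is precisely where $(-1)^{(m+1)y}$ and $(-1)^{y+1}$ come from. This is the analogue of your ``connector cells,'' but it is the step you explicitly leave open (``should come out''), and without a forced rectangle an arch decomposition gives no reason for the sign to depend only on $y$.

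Two further ingredients of the actual proof have no counterpart in your plan and would remain gaps even if the decomposition were repaired: (i) paths containing \emph{no} marker must be handled separately (they contribute the extra terms $D_s(\ell-2,m)$ and $D_s(1,m-2)$, which are then absorbed into the sums), whereas a first-return decomposition is exhaustive and would leave no room for such boundary terms in the stated formulas; and (ii) in the base case the $y=1$ term must be shown to vanish by an involution --- a path through the marker $y_1$ begins $EN$ or $NE$, and the two options cancel in signed area. Your reading of the hypothesis $s>3$ (degeneration of a sub-shape, with $s=3$ handled elsewhere) is also speculation; in the paper it is tied to the marker placements and parity bookkeeping, not to a failure of the recursion's shapes. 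So the proposal correctly anticipates a signed convolution with a sign recorded by forced area, but neither the decomposition itself nor the sign, boundary-term, and cancellation arguments are in place.
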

\begin{proof}[Proof of \eqref{dslm base}]
Our approach is inspired by the proof of \cite[Proposition 1.5]{reinerTennerYong2018}, where markers are placed on the Young diagram which form the basis for a recursion. 

In $Y_s(\ell,m)$, place a marker $y_i$ for $2\le i\le m-1$ in the box $(m-i,i-1,1)$ (so, with $2s-1$ empty boxes to its right) and place a marker $y_1$ in the box $(m-1,0,1)$. See \figref{fig:base yd}. 

\begin{figure}
    \centering
    \includegraphics[width=\textwidth]{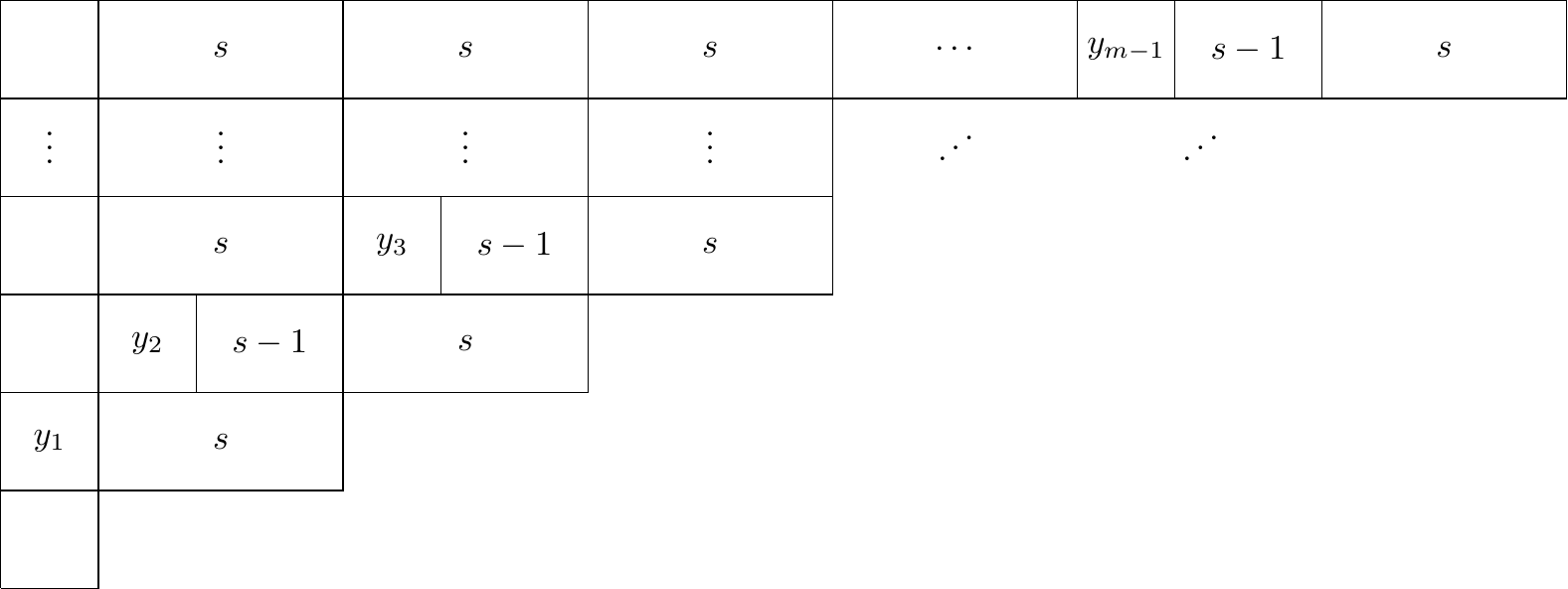}
    \caption{Young diagram $Y_s(1,m)$. }
    \label{fig:base yd}
\end{figure}

We sum over the southwesternmost marker contained above a given Dyck path in the diagram: $$\sum\limits_{y=2}^{m-1}(-1)^{e_s(1,m,y)}D_s(1,y-2)D_s(2s-1,m-y).$$ 
Here, the term $D_s(1,y-2)D_s(2s-1,m-y)$ obtains its first multiplicand from the possibilities for the path below the rectangle to the marker and its second from the possibilities for the path from the marker to the northeast corner. The sign is determined by the size of the $(m-y)\times(2+(y-1)s)$ rectangle included above the path. The $y=1$ term vanishes because any path containing the marker $y_1$ may be obtained by any path in $Y_s(1,m-2)$ prefixed by either $EN$ or $NE$, yielding paths whose areas cancel. 

A path in $Y_s(1,m)$ need not contain a marker; these are counted by noting that the first two directions are $NN$ followed by any path (of the same parity area) in $Y_s(1,m-2)$. Noting that $D_s(2s-1,0)=1$ and that $e_s(1,m,m)\equiv0\pmod{2}$, we find 
\begin{align}
D_s(1,m) &= \sum\limits_{y=2}^{m-1}(-1)^{e_s(1,m,y)}D_s(1,y-2)D_s(2s-1,m-y) + D_s(1,m-2)\nonumber\\
&= \sum\limits_{y=2}^{m-1}(-1)^{(m-y)\cdot(1+(y-1)s)}D_s(1,y-2)D_s(2s-1,m-y) + D_s(1,m-2)D_s(2s-1,0)\nonumber\\
&= \sum\limits_{y=2}^m(-1)^{y+1}D_s(1,y-2)D_s(2s-1,m-y)\nonumber\\
&= \sum\limits_{y=0}^{m-2}(-1)^{(m+y)(y+1)}D_s(1,y)D_s(2s-1,m-(y+2)).\nonumber
\end{align}
\end{proof}

\begin{proof}[Proof of \eqref{dslm inductive}]

Place a marker $y_i$ for $1\le i\le m-1$ in the box $(m-i,i,s-1)$ and place a marker $y_0$ in the box $(m,0,\ell-1)$ (so, in all cases with 1 empty box to its right). See \figref{fig:ind yd}.

\begin{figure}
    \centering
    \includegraphics[width=\textwidth]{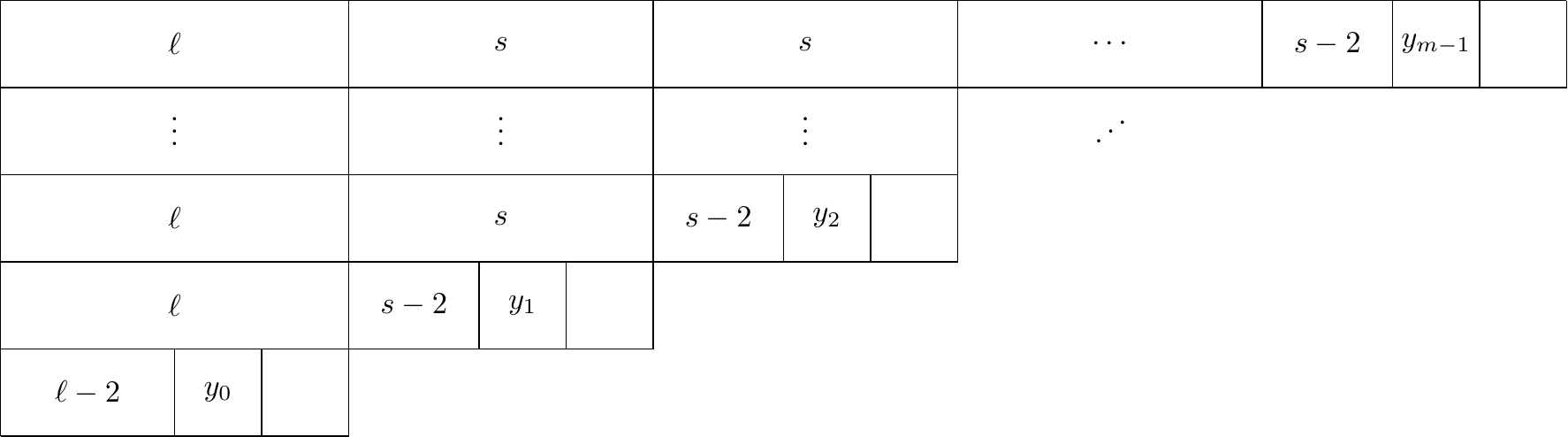}
    \caption{Young diagram $Y_s(\ell,m)$. }
    \label{fig:ind yd}
\end{figure}

We sum over the southwesternmost marker contained above a given Dyck path in the diagram: 
$$\sum\limits_{y=0}^{m-1}(-1)^{e_s(\ell,m,y)}D_s(\ell-2,y)D_s(1,m-y),$$ 
where each summand arises from the necessary inclusion of the $(m+1-y)\times(\ell+sy-1)$ rectangle (yielding the exponent $e_s$ of $-1$) along with a path below the rectangle to the marker (yielding the first multiplicand) and a path from the marker (yielding the second multiplicand). 

A path in $Y_s(\ell,m)$ need not contain a marker; these are counted by noting that the first direction is $N$ followed by any path (of opposite parity area) in $Y_s(s,m-1)$ (since each marker $y_i$ is avoided). Noting that $D_s(1,0)=1$ and that $e_s(1,m,m)\equiv0\pmod{2}$, we find 
\begin{align*}
D_s(\ell,m) &= \sum\limits_{y=0}^{m-1}(-1)^{e_s(\ell,m,y)}D_s(\ell-2,y)D_s(1,m-y) + D_s(\ell-2,m) \\
&= \sum\limits_{y=0}^{m-1}(-1)^{(m+1-y)\cdot(\ell+sy-1)}D_s(\ell-2,y)D_s(1,m-y) + D_s(\ell-2,m)D_s(1,0) \\
&= \sum\limits_{y=0}^m(-1)^{my}D_s(\ell-2,y)D_s(1,m-y).
\end{align*}
\end{proof}

\begin{prop}\label{prop:dslm vanish}
For all odd $\ell$, and all $s$ and $m$ with $s>3$, $D_s(\ell,m)$ vanishes. 
\end{prop}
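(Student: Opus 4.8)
The plan is to prove $D_s(\ell,m)=0$ by a nested induction: an outer induction on the odd positive integer $m$, and, for each fixed $m$, an inner induction on odd $\ell$ in which $\ell=1$ is settled first and then $\ell=3,5,\dots$ are treated in increasing order. The base case $m=1$ can be checked directly: the Dyck paths in $Y_s(\ell,1)$ are the monotone lattice paths in a $1\times\ell$ rectangle, the path that turns north after $i$ east-steps has area $\ell-i$, and pairing consecutive values of $i$ cancels the areas in pairs, giving $D_s(\ell,1)=\sum_{i=0}^{\ell}(-1)^{\ell-i}=0$ for odd $\ell$. (Alternatively, for $m=1$ the sum in~\eqref{dslm base} is empty and~\eqref{dslm inductive} reduces $D_s(\ell,1)$ to $D_s(\ell-2,1)$, so the same induction on $\ell$ works.)

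For the inductive step, fix odd $m\ge 3$ and assume $D_s(\ell',m')=0$ for every odd $\ell'$ and every odd $m'$ with $1\le m'<m$. First I would dispatch $\ell=1$ using~\eqref{dslm base}, $D_s(1,m)=\sum_{y=0}^{m-2}(-1)^{y+1}D_s(1,y)\,D_s(2s-1,m-y-2)$. Since $y+(m-y-2)=m-2$ is odd, exactly one of $y$ and $m-y-2$ is odd in each summand. If $y$ is odd then $1\le y<m$, so $D_s(1,y)=0$ by the outer induction. If $y$ is even then $m-y-2$ is odd and nonnegative, and it cannot vanish (that would force $y=m-2$, an odd number, contradicting the parity of $y$), so it is a positive odd integer less than $m$; since $2s-1$ is odd and exceeds $3$ (as $s>3$), the outer induction gives $D_s(2s-1,m-y-2)=0$. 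Hence every summand vanishes and $D_s(1,m)=0$. Next, for odd $\ell\ge 3$, I would use~\eqref{dslm inductive}, which for odd $m$ becomes $D_s(\ell,m)=\sum_{y=0}^{m}D_s(\ell-2,y)\,D_s(1,m-y)$ because $(-1)^{(m+1)y}=1$. Exactly one of $y$ and $m-y$ is odd in each term. If $y$ is odd, $D_s(\ell-2,y)=0$: by the outer induction when $y<m$, and by the inner induction on $\ell$ when $y=m$, since $(\ell-2,m)$ has strictly smaller first parameter. If $y$ is even, $m-y$ is odd, and either $m-y<m$, whence $D_s(1,m-y)=0$ by the outer induction, or $y=0$, in which case the term equals $D_s(\ell-2,0)\,D_s(1,m)=1\cdot 0$ by the case $\ell=1$ just proved. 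Every summand vanishes, so $D_s(\ell,m)=0$, completing the induction.

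The arithmetic here is entirely routine, so the one point that genuinely needs care --- and thus the main obstacle --- is verifying that the nested induction is well-founded: each value $D_s(\ell',m')$ invoked on the right of~\eqref{dslm base} or~\eqref{dslm inductive} must already have been established. For~\eqref{dslm base} every second argument is strictly less than $m$, so there is nothing to worry about; for~\eqref{dslm inductive} the second argument is at most $m$, and the two summands with second argument exactly $m$ (at $y=0$ and $y=m$) reduce to $D_s(1,m)$ and $D_s(\ell-2,m)$, i.e.\ to strictly smaller values of the first parameter. This is exactly why $\ell=1$ must be resolved before $\ell\ge 3$ and why the odd $\ell$ are processed in increasing order. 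Once that is in place, the whole proof rests on a single observation: when $m$ is odd, each product in either recurrence has a factor whose second argument is odd, hence --- after setting aside the harmless $y=0$ term --- a positive odd integer that is either below $m$ or sits at a smaller $\ell$, and the inductive hypothesis kills it.
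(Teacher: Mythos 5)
Your proof is correct and follows essentially the same route as the paper: the base case $m=1$ (a single row, where areas alternate and cancel in pairs) together with a strong induction driven by the parity observation that for odd $m$ exactly one of $y$, $m-y$ (resp.\ $y$, $m-y-2$) is odd, applied to the recurrences \eqref{dslm base} and \eqref{dslm inductive}. The only difference is organizational --- you run a lexicographic nested induction (outer on $m$, inner on $\ell$) and spell out the boundary terms $y=0$, $y=m$ explicitly, whereas the paper phrases it as strong induction over the product order on pairs $(\ell,m)$.
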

\begin{proof}
We fix $s$ and induct on pairs of positive odd integers $(\ell,m)$ in the poset where $(\ell,m)\le(\ell',m')$ if and only if $\ell\le\ell'$ and $m\le m'$. We prove the result by direct computation for elements in the set $I \coloneqq \{(\ell,1)\mid\text{odd $\ell\in\N$}\}$, in which case $Y_s(\ell,1)$ is simply $\ell$ blocks in a (horizontal) row. There are $\ell+1$ paths, characterized by the position of the $N$, and as that position increases their areas alternate parity. $I$'s upwards-closed ideal equals the entire poset, so we invoke strong induction and use \propref{prop:dslm recur} \eqref{dslm base} and \eqref{dslm inductive}: we observe that for odd $m$ and any $y\in\Z$:  \begin{itemize}
    \item exactly one of $y$ and $m-y$ is odd; and
    \item exactly one of $y$ and $m-y-2$ is odd. 
\end{itemize}
The result follows. 
\end{proof}

\begin{prop}\label{prop:dslm raney}
We conclude that for even $m$, 
$$D_s(\ell,m)=R_{s+1,\frac{\ell+1}{2}}\lpr{\frac{m}{2}}.$$ 
\end{prop}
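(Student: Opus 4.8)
The plan is to specialize the two recurrences of \propref{prop:dslm recur} to $m$ even and recognize what remains as the defining convolution recurrences for the Raney numbers. Throughout I would set $p \coloneqq s+1$ and write the odd parameter as $\ell = 2r-1$, so that the claimed identity becomes $D_s(2r-1,2n) = R_{p,r}(n)$, now to be established for all integers $r\ge 1$ and $n\ge 0$; note that the value $2s-1$ occurring in \eqref{dslm base} corresponds under this parametrization to $r = s = p-1$. The induction is on the pair $(n,r)$, ordered so that $(n,r)<(n',r')$ iff $n<n'$, or $n=n'$ and $r<r'$.

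The base case $n=0$ is immediate: $D_s(\ell,0)=1$ by \propref{prop:dslm recur}, while $R_{p,r}(0)=\frac{r}{r}\binom{r}{0}=1$. For the inductive step when $r=1$, I would apply \eqref{dslm base} with $m=2n$. By \propref{prop:dslm vanish} --- valid since $s>3$ and both $1$ and $2s-1$ are odd --- every summand carrying an odd index vanishes, and on the surviving terms the index is even, so the sign contribution is $+1$. This leaves
\[ D_s(1,2n)=\sum_{i=0}^{n-1}D_s(1,2i)\,D_s\big(2s-1,2(n-1-i)\big); \]
since both $i$ and $n-1-i$ are less than $n$, the induction hypothesis rewrites the right-hand side as $\sum_{i=0}^{n-1}R_{p,1}(i)R_{p,p-1}(n-1-i)$, which is precisely $R_{p,1}(n)$ by the first recurrence of \propref{prop:RaneyRecursion2}. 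When $r\ge 2$, I would instead apply \eqref{dslm inductive} with $\ell=2r-1$ and $m=2n$; once more \propref{prop:dslm vanish} kills the odd-index terms and the surviving signs are trivial, so
\[ D_s(2r-1,2n)=\sum_{i=0}^{n}D_s(2r-3,2i)\,D_s\big(1,2(n-i)\big). \]
Each term $D_s(2r-3,2i)$ lives at the pair $(i,r-1)$ with $i\le n$, and each term $D_s(1,2(n-i))$ at the pair $(n-i,1)$; both are strictly smaller than $(n,r)$ (in the boundary case $i=n$ one uses $r-1<r$, and for $i=0$ one uses $1<r$). Applying the induction hypothesis and then peeling the last factor out of the sum in \propref{prop:RaneyRecursion} --- which yields $R_{p,r}(n)=\sum_{i=0}^n R_{p,r-1}(i)R_{p,1}(n-i)$ --- identifies the right-hand side with $R_{p,r}(n)$, closing the induction.

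I expect the only real difficulty to be careful bookkeeping rather than any new idea: one must verify that, after discarding the summands that vanish by \propref{prop:dslm vanish}, the surviving sign contributions in both recurrences of \propref{prop:dslm recur} are all $+1$ for even $m$, and that every $D_s$-term appearing on the right-hand side of either recurrence is indexed by a strictly smaller pair in the chosen order --- in particular the degenerate summands at $i=0$ and $i=n$. The rest is the routine translation between the paper's indexing $(\ell,m)$ and the Raney indexing $(p,r,k)$ via $p=s+1$, $r=(\ell+1)/2$, $k=m/2$.
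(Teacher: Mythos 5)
Your proposal is correct in substance and follows essentially the same route as the paper's proof: specialize the recurrences of \propref{prop:dslm recur} to even $m$, kill the odd-indexed summands with \propref{prop:dslm vanish}, and recognize the surviving convolutions as Raney recurrences. The paper packages this by setting $D_s'(\ell,m)=D_s(\ell,2m)$ and asserting the match with \propref{prop:RaneyRecursion2} (leaving the induction implicit), whereas you make the lexicographic induction on $(n,r)$ explicit and obtain the $r\ge2$ convolution $R_{p,r}(n)=\sum_{i}R_{p,r-1}(i)R_{p,1}(n-i)$ from \propref{prop:RaneyRecursion} rather than from the second line of \propref{prop:RaneyRecursion2}; both are harmless presentational differences (the latter arguably prudent, given how that second recurrence is printed).

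The one point you flag but do not actually verify --- that all surviving signs are $+1$ --- is exactly the delicate step, and it does not follow from \eqref{dslm base} as stated: there the exponent is $y+1$, which is odd for even $y$, so literal substitution would put an overall minus on the $r=1$ recurrence and derail the induction. The $+1$ sign for even $m$ instead comes from the general form of the exponent in the last line of the paper's derivation of \eqref{dslm base}, namely $(-1)^{(m+y)(y+1)}$ --- equivalently, from the parity of the forced $(m-y)\times\bigl(2+(y-1)s\bigr)$ rectangle, which is even when $m$ and $y$ are both even and $s$ is odd. Supply that check (the printed exponent $(m+1)y$ in \eqref{dslm inductive} is already even for even $y$, as you say) and your argument closes; note that the published proof slips at the same spot, writing $(-1)^{2y+1}=-1$ and still asserting the match with the Raney recurrence.
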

\begin{proof}
Since in \propref{prop:dslm recur} \eqref{dslm base} and \eqref{dslm inductive} the first argument of $D_s$ is always odd, for odd $y$ the summands vanish. Hence we restrict our attention to even $y$. Letting $D_s'(\ell,m) \coloneqq D_s(\ell,2m)$, we observe that:
\begin{align*}
D_s'\lpr{1,\frac{m}{2}} &= D_s(1,m) \\
&= \sum\limits_{y=0}^{\frac{m}{2}-1}(-1)^{2y+1}D_s(1,2y)D_s(2s-1,m-(2y+2)) \\
&=-\sum\limits_{y=0}^{\frac{m}{2}-1}D_s'(1,y)D_s'\lpr{2s-1,\frac{m}{2}-y-1}. \\
D_s'\lpr{\ell,\frac{m}{2}} &= D_s(\ell,m) \\
&= \sum\limits_{y=0}^{\frac{m}{2}}(-1)^{(m+1)2y}D_s(\ell-2,2y)D_s(1,m-2y) \\
&= \sum\limits_{y=0}^{\frac{m}{2}}D_s'(\ell-2,y)D_s'\lpr{1,\frac{m}{2}-y}.
\end{align*}
These precisely match the recurrences of \propref{prop:RaneyRecursion2}. 
\end{proof}

\begin{proof}[Proof of {\thmref{thm:dyckdiff}}]
The case of $s=1$ is proved in \cite[Theorem 6.1]{rs2017dihedral}, as the Raney numbers $R_{2,1}(n)$ are precisely the Catalan numbers. 

We observe that $D_s(m)=D_s(0,m)=D_s(s,m-1)$. Since $m$ is odd, from \propref{prop:dslm raney}, we are done. 
\end{proof}

From this result, we are able to claim a proof of the reflection case of dihedral sieving for the instance of $k$-angulations of an $n$-gon, since we have shown equality between the number of $k$-angulations fixed by a given reflection and the appropriate evaluation of the Catalan $q,t$-analogue. The following result of Eu and Fu will be used for the rotational case of dihedral sieving.

\begin{prop}[\cite{eufu2008dissections}]\label{prop:catalanatroots}
For $d\geq 2$ a divisor of $sm+2$, let $\omega$ be a primitive $d$th root of unity. Then
\[\Cat_{sm+1,m}(\omega, \omega^{-1}) = 
\begin{cases}
{\frac{m(s+1)+1}{d}-1\choose \frac{m-1}{d}} &\text{if }d\geq 2\text{ and }d|k,\\
0 & \text{otherwise.}
\end{cases}
\]
\end{prop}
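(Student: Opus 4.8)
The target statement is \propref{prop:catalanatroots}, which is attributed to Eu and Fu --- so the ``proof'' here should really be an exposition of how their argument goes, or how one would rederive it in the present notation. The plan is to reduce the evaluation $\Cat_{sm+1,m}(\omega,\omega^{-1})$ to a CSP-style root-of-unity evaluation and then recognize the answer via the cyclic sieving already established by Eu--Fu on type $A$ cluster complexes (equivalently, on $k$-angulations). First I would recall that by \defref{def:qtratcat} and the remark following it, setting $t = q^{-1}$ (so $t$ is no longer ``free'') collapses $\Cat_{sm+1,m}(q,q^{-1})$ into an honest one-variable $q$-analogue: one checks that $\sum_{\gl} q^{\area(\gl) - \area(\sweep(\gl))}$ is, up to an overall power of $q$ and the substitution $q \mapsto q$, the $q$-Fuss--Catalan number $\Cat^{(s)}_m(q) = \frac{1}{[m]_q\cdot\text{(stuff)}}\qbinom{(s+1)m}{m}_q$, the standard cyclic sieving polynomial for $(s+2)$-angulations of the $(sm+2)$-gon under rotation. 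This identification is exactly the kind of specialization statement that Stump's conjectures (invoked in \thmref{thm:alltypesoftriangles}) assert in general type, and which is a theorem in type $A$.

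Next I would invoke the cyclic sieving phenomenon of Eu and Fu: the pair $(X_{s,m} \ \acton \ C_{sm+2}, \Cat^{(s)}_m(q))$ exhibits CSP, where $C_{sm+2}$ acts by rotation. By definition of CSP, evaluating at a primitive $d$th root of unity $\omega$ (for $d \mid sm+2$) gives the number of $(s+2)$-angulations fixed by a rotation of order $d$. That fixed-point count is precisely what \thmref{thm:rotationcounting} computes: it is $\binom{\frac{m(s+1)+1}{d}-1}{\frac{m-1}{d}}$ when $d \mid s+2$ (equivalently $d \mid k$, since $k = s+2$), and it is $0$ otherwise --- the ``otherwise'' case because a rotation of order $d$ can only fix an $(s+2)$-angulation if it fixes the central $(s+2)$-gon, which forces $d \mid s+2$. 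Chaining the two identifications --- $\Cat_{sm+1,m}(\omega,\omega^{-1}) = \Cat^{(s)}_m(\omega) = \#\{\text{fixed }(s+2)\text{-angulations}\} = T(d,s,m)$ --- yields the claimed formula.

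The main obstacle I expect is the first step: cleanly matching the two-variable $\Cat_{a,b}(q,t)$ at $t = q^{-1}$ with the classical one-variable $q$-Fuss--Catalan number. This requires understanding the joint statistics $(\area, \area\circ\sweep)$ well enough to see that their difference, summed over $(sm+1,m)$-Dyck paths, reproduces the $q$-analogue with the right normalization and the right overall monomial shift; the $\sweep$ bijection (of \cite{ALW2015Sweep, TW2015Sweep}) is what makes $\area - \area\circ\sweep$ behave, but verifying the precise rational-function identity $\sum_\gl q^{\area(\gl)-\area(\sweep(\gl))} = q^{c}\,\Cat^{(s)}_m(q)$ for the appropriate constant $c$ is the technical heart. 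Once that is in place, the rest is a formal concatenation of Eu--Fu's CSP with \thmref{thm:rotationcounting}, both of which are already available to us. An alternative route that sidesteps the $\sweep$ analysis entirely is to quote Eu--Fu's evaluation directly from \cite{eufu2008dissections} --- where the polynomial is defined in their conventions --- and simply verify that their polynomial and our $\Cat_{sm+1,m}(q,t)$ agree after the $t = q^{-1}$ specialization by comparing degrees and leading terms, or by citing the type-$A$ case of Stump's specialization conjecture, which is known.
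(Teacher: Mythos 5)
The paper itself gives no proof of \propref{prop:catalanatroots}: it is imported wholesale from Eu and Fu, so your closing ``alternative route''---quote their evaluation directly and reconcile conventions, including the $t=q^{-1}$ specialization---is in effect what the paper does. Judged on its own terms, your primary route (specialize $t=q^{-1}$ to a one-variable $q$-Fuss--Catalan number, apply Eu--Fu's CSP for $(s+2)$-angulations, then read the fixed-point count off \thmref{thm:rotationcounting}) is logically sound and not circular, since the CSP is an external input and \thmref{thm:rotationcounting} is proved independently in the paper. But note that it runs Eu--Fu's argument backwards: their CSP is established precisely by evaluating the $q$-Fuss--Catalan number at roots of unity and comparing with a fixed-point count. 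When your version of the proposition is then recombined with \thmref{thm:rotationcounting} in the proof of \thmref{thm:Akangulations}, the rotation part of the sieving statement collapses to a restatement of Eu--Fu's CSP plus the specialization, with the explicit binomial formula doing no work there.

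The genuine gap is the step you flag but do not carry out. For the sweep-based definition in \defref{def:qtratcat} one needs $q^{\delta}\,\Cat_{sm+1,m}(q,q^{-1})$ to equal the rational $q$-Catalan number $\frac{1}{[(s+1)m+1]_q}\binom{(s+1)m+1}{m}_{q}$, where $\delta=\frac{sm(m-1)}{2}$ is the maximal area; this is a nontrivial statement about the joint statistic $(\area,\area\circ\sweep)$, not a formal consequence of $\sweep$ being a bijection, and appealing to ``the type-$A$ case of Stump's conjecture'' does not settle it, because Stump's polynomial is the module-theoretic $\Cat(\Phi,q,t)$ of \defref{def:qtcatW}, whose identification with the Dyck-path polynomial in the Fuss/rational setting is itself a substantial input. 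Moreover, since the proposition asserts an exact equality with no stray monomial, you must also verify that $\omega^{\delta}=1$ in the nonzero case; this does hold ($d\mid sm+2$ with $s,m$ odd forces $d$ odd, and $d\mid s+2$ then gives $d\mid m-1$, hence $d\mid\frac{m-1}{2}$ and $d\mid\delta$), but that check is absent from your write-up. With the specialization identity supplied---or simply cited, as the paper does---the rest of your argument goes through.
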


Combining all of these results, we obtain: 

\begin{proof}[Proof of {\thmref{thm:Akangulations}}]
For reflections on the polygon, by \thmref{thm:reflectioncounting} and \thmref{thm:dyckdiff}, we have
\[S_{s+2}(n) = D_s(m) = \Cat_{sm+1, m}(1, -1)\]
where $S_{s+2}(n)$ is the number of $(s+2)$-dissections of an $n$-gon and $D_s(m)$ is the difference between the number of even-area and odd-area $(ms+1,m)$-Dyck paths. 

For rotations on the polygon, by \thmref{thm:rotationcounting} and \propref{prop:catalanatroots}, we have for $\omega$ a primitive $d$th root of unity
\[T(d,s,m) = \Cat_{sm+1, m}(\omega, \omega^{-1})\]
where $T(d,s,m)$ denotes the number of $(s+2)$-angulations of an $(sm+2)$-gon fixed under rotation by $\omega$. Now we can conclude dihedral sieving for $(X_{s,m}\ \acton \ I_2(n),\Cat_{sm+1,m}(q,t))$.
\end{proof}

\section{Dihedral Sieving on Clusters of Other Types}\label{sec:othertypes}

In this section we prove our second main result, which we recall here:

\setcounter{section}{1}
\setcounter{theorem}{2}
\begin{theorem}
Assume two conjectures of C. Stump \cite[Conjectures 3 and 4]{stump2010qtfcat}. Then, we have that the pair $(\Delta(\Phi) \ \acton \ I_2(n),\Cat(\Phi,q,t))$ exhibits dihedral sieving for all odd $n$ and $\Phi$ of type $A$, $B/C$, $D$, $E$, $F$, or $I$.
\end{theorem}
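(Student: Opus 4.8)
The plan is to reduce, via \lemref{lem:equivdef}, to a fixed-point count: it suffices to show that for every $g\in I_2(n)$, if $\lambda_1,\lambda_2$ are the eigenvalues of $\rho_{\de}(g)$, then the number of clusters of $\Delta(\Phi)$ fixed by $g$ equals $\Cat(\Phi,\lambda_1,\lambda_2)$. Since $n$ is odd, $I_2(n)$ has exactly three kinds of elements, and their $\rho_{\de}$-eigenvalue pairs are: $\{1,1\}$ for the identity; $\{\zeta^j,\zeta^{-j}\}$ for the rotation $R^j$ ($1\le j\le n-1$), where $\zeta=e^{2\pi i/n}$; and $\{1,-1\}$ for every reflection, all of which are conjugate in $I_2(n)$. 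The eigenvalue pairs of the rotations lie on the locus $t=q^{-1}$, while by the symmetry of $\Cat(\Phi,q,t)$ the reflections (and the identity) only require values on the locus $t=1$; this is exactly why the two conjectures of Stump, which concern precisely these two specializations, are the only inputs about $\Cat(\Phi,q,t)$ the argument needs.

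For the identity and the rotations, I would combine the cyclic sieving phenomenon of Eu and Fu on cluster complexes with Stump's conjectured formula for $\Cat(\Phi,q,q^{-1})$. Eu and Fu's result, applied to the $\langle R\rangle$-action, identifies $|\{x\in\Delta(\Phi):R^j x=x\}|$ with the evaluation of the $q$-Catalan number $\Cat(\Phi,q):=\Cat(\Phi,q,1)$ at $\zeta^j$ --- at $j=0$ this is $\Cat(\Phi,1,1)$, the $W$-Catalan number, counting all clusters. On the other side, Stump's $q=t^{-1}$ conjecture rewrites $\Cat(\Phi,q,q^{-1})$ as an explicit cyclotomic product; this is the arbitrary-type analogue of \propref{prop:catalanatroots}, which carried out exactly this step in type $A$. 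It then remains to check that the two root-of-unity evaluations agree for every $j$, a verification with products of cyclotomic polynomials expressed through the degrees $d_i$ and the Coxeter number $h$ of $W$ (and which in type $A$ is the identification already made in \thmref{thm:rotationcounting} together with \propref{prop:catalanatroots}).

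For the reflections, I would first use the symmetry of $\Cat(\Phi,q,t)$ together with Stump's $t=1$ conjecture to write $\Cat(\Phi,1,-1)=\Cat(\Phi,-1,1)=\Cat(\Phi,q)\big|_{q=-1}$, the arbitrary-type $q$-Catalan number at $q=-1$. The task is then to prove that this number equals the number of clusters of $\Delta(\Phi)$ fixed by a reflection $\tau_+$. In type $A$ this is precisely \thmref{thm:reflectioncounting} together with \thmref{thm:dyckdiff} (and \propref{prop:catalanatroots}), matching reflection-symmetric $k$-angulations, Raney numbers, and the specialization $\Cat_{sm+1,m}(1,-1)$. For types $B/C$ and $D$ I would run the same argument in the polygon-dissection models of the cluster complex reviewed in \secref{sec:ccdefs}: a reflection of $\Delta(\Phi)$ corresponds to a mirror symmetry of the (centrally symmetric) dissected polygon, reflection-symmetric dissections are enumerated by a Raney-type expression through the same dual-tree / coral-diagram bijection as in \thmref{thm:reflectioncounting}, and that expression is matched against $\Cat(\Phi,q)\big|_{q=-1}$ by manipulating the product formula. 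For the finitely many exceptional root systems $E_6,E_7,E_8,F_4$ and the rank-two systems $I_2(m)$ with $m$ odd, the cluster complex and the reflection action are small enough that the identity $|\{x:\tau_+ x=x\}|=\Cat(\Phi,q)\big|_{q=-1}$ can be checked by direct enumeration.

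The main obstacle is the reflection case. Unlike the rotations there is no ready-made sieving statement to cite, so the fixed-point counts must be produced by genuine combinatorics --- via the type $B/C$ and $D$ realizations and by hand for the exceptional and rank-two types --- and then reconciled with $\Cat(\Phi,q)\big|_{q=-1}$. The latter evaluation is itself delicate, since $\Cat(\Phi,q)=\prod_i[d_i+h]_q/[d_i]_q$ specializes at $q=-1$ to a ratio of products of $0$'s and $1$'s whose value depends on the parities of the degrees $d_i$ and of $h$ and must be read off as a limit; some care is needed to see that it is a nonnegative integer agreeing with the geometric count. A secondary point is to confirm that $\Cat(\Phi,q,t)$ as defined in \defref{def:qtcatW} --- with whatever power-of-$(qt)$ normalization makes it an honest polynomial, as in \thmref{thm:prevdisieve}(5) --- genuinely has the two specializations predicted by Stump's conjectures, so that the whole argument applies to exactly the polynomial appearing in the statement.
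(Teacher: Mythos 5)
Your overall reduction and your treatment of the identity and rotation cases match the paper: Stump's $q=t^{-1}$ conjecture gives $\Cat(\Phi,q,q^{-1})=\Cat(\Phi,q)$, which is literally the polynomial in Eu--Fu's cyclic sieving result (\thmref{thm:cycalltypes}), so the rotation case closes immediately and the extra root-of-unity verification you anticipate is not needed. The genuine gap is in the reflection case. You write $\Cat(\Phi,-1,1)=\Cat(\Phi,q)\big|_{q=-1}$, but this conflates Stump's two specializations: the $t=1$ conjecture states $\Cat(\Phi,q,1)=\sum_{I\in J(P)}q^{|I|}$, the rank generating function for order ideals of the root poset, \emph{not} the product-formula $q$-Catalan number (that is the $t=q^{-1}$ specialization). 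Hence the number a reflection must produce is the signed order-ideal count $\sum_{I\in J(P)}(-1)^{|I|}$ --- exactly \claimref{claim:checkthis} in the paper --- and this genuinely differs from $\Cat(\Phi,q)\big|_{q=-1}$. Already in type $A_2$: $\Cat(A_2,q,1)=1+2q+q^2+q^3$ gives $\Cat(A_2,1,-1)=-1$, matching (up to sign) the unique reflection-symmetric triangulation of the pentagon, whereas $\Cat(A_2,q)\big|_{q=-1}=(1+q^2+q^3+q^4+q^6)\big|_{q=-1}=3$. So the target you propose to hit is the wrong number.

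This wrong target then derails your combinatorial plan for types $B/C$ and $D$: you expect nonzero Raney-type counts of reflection-symmetric dissections, but with the parities forced by requiring an odd dihedral action the paper shows (\lemref{lem:noBfix}, \lemref{lem:noDfix}) that \emph{no} facet is fixed by $\tau_\pm$ at all (and in type $D$ the maps $\tau_\pm$ reflect the polygon \emph{and} swap flavors, so they are not plain mirror symmetries of the dissection), and correspondingly that the signed order-ideal sums over the trapezoid poset $T_{n,2n}$ and the double-triangle poset vanish (\lemref{lem:pB0} and its type-$D$ analogue). Matching instead against $\Cat(\Phi,q)\big|_{q=-1}$, which by the usual $q=-1$ phenomenon is typically a nonzero binomial coefficient, would fail. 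Your plan to settle the exceptional types and type $I$ by direct enumeration does agree with what the paper does, but the reflection case for the infinite families needs to be re-aimed at the signed order-ideal count before it can go through.
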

\setcounter{section}{4}
\setcounter{theorem}{0}

We make no such claims about cluster complexes for $H_3$ and $H_4$, since the former is of the wrong parity for odd dihedral sieving, and the latter lacks a root poset.

First, we define the polynomial we will use. 
In \cite{eufu2008dissections}, the polynomial \[ \Cat(\Phi,q) \coloneqq \prod\limits_{i=1}^n \frac{[h+e_i +1]_q}{[e_i + 1]_q}\]
\noindent is used to exhibit cyclic sieving. Here $h$ is the Coxeter number and $e_1,\ldots,e_n$ are the exponents of $\Phi$, though these details will not pertain directly to our discussion. Observe that when $\Phi$ is of type $A_n$, we have $h=n+1$ and $e_i=i+1$ from which we have $\Cat(\Phi,q)$ collapse to the usual $q$-Catalan numbers.

\begin{theorem}[{\cite[Theorem 1.2]{eufu2008dissections}}]\label{thm:cycalltypes}
The pair $(\Delta(\Phi)\ \acton \ C_n,\Cat(\Phi,q) )$ exhibits the cyclic sieving phenomenon for all root systems $\Phi$.
\end{theorem}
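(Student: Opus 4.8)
The plan is to prove the cyclic sieving phenomenon directly from its fixed-point formulation, in the same spirit as \lemref{lem:equivdef} but for the cyclic rather than the dihedral group. Write $C_n = \langle R\rangle$ for the cyclic group of order $n$ generated by the rotation $R=\tau_-\tau_+$ acting on the facets (clusters) of $\Delta(\Phi)$, let $d_1,\dots,d_r$ be the degrees of $W$ (so $r$ is the rank and $d_i = e_i+1$), and let $\omega = e^{2\pi i/n}$. Recall that $\Cat(\Phi,1)=\prod_{i=1}^r\frac{h+d_i}{d_i}$ is exactly the number of clusters. Since the trace of a permutation representation counts fixed points, the pair $(\Delta(\Phi)\ \acton\ C_n,\Cat(\Phi,q))$ exhibits the CSP if and only if, for every $\ell$,
\[
\#\{F : R^\ell F = F\} = \Cat(\Phi,\omega^\ell).
\]
So I would split the work into two independent computations --- the fixed-point count on the left and the root-of-unity evaluation on the right --- and match them for each divisor $d\mid n$. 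It suffices to treat one $\ell$ per divisor, namely with $\omega^\ell$ a primitive $d$th root of unity, since both sides depend only on $d=n/\gcd(\ell,n)$ (the left side depends only on the subgroup $\langle R^\ell\rangle$, and the right side is a nonnegative integer, hence invariant under the Galois action permuting primitive $d$th roots).

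For the right-hand side I would first establish a root-of-unity evaluation lemma for $\Cat(\Phi,q)=\prod_{i=1}^r\frac{[h+d_i]_q}{[d_i]_q}$, generalizing \propref{prop:catalanatroots} from type $A$ to all types. Writing $\mathrm{Cyc}_e$ for the $e$th cyclotomic polynomial, one uses the factorization $[m]_q=\prod_{e\mid m,\,e>1}\mathrm{Cyc}_e(q)$, so that a factor $[m]_q$ vanishes at a primitive $d$th root of unity precisely when $d\mid m$, and then only to first order. Hence at $q=\omega^\ell$ the product is a controlled $0/0$ resolved by cancelling the common $\mathrm{Cyc}_d$ factors (equivalently, by pairing the numerator indices $h+d_i$ and denominator indices $d_i$ divisible by $d$ and applying L'H\^opital). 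This should collapse $\Cat(\Phi,\omega^\ell)$ to a finite product over the degrees divisible by $d$ --- a ``Catalan number of the fixed subsystem'' --- and should vanish exactly when the counts of numerator and denominator indices divisible by $d$ disagree, i.e. when $d$ is not a regular number; this matches both the binomial and the ``$0$ otherwise'' in \propref{prop:catalanatroots}.

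For the left-hand side I would count the clusters fixed by $R^\ell$. In the classical types $A$, $B/C$, $D$ I would use the polygon and dissection realizations set up in \secref{sec:ccdefs}, so that $R^\ell$ becomes a rotation of a regular polygon and invariant clusters become rotationally symmetric dissections; these are then enumerated by the bijective, Raney-number techniques already developed here for type $A$ (cf.\ \thmref{thm:rotationcounting}), now carried out in each type and producing the same closed forms as the evaluation lemma. For the exceptional types $E$, $F$ and the remaining rank-two types $I$, where no polygon model is available, I would instead invoke a uniform fixed-point principle from Springer's theory of regular elements, which predicts that the number of $R^\ell$-invariant clusters is again a product over the degrees divisible by $d$ --- precisely the shape produced by the evaluation lemma. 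The rank-two and other small cases can alternatively be checked by direct enumeration of the finitely many clusters.

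The main obstacle is this fixed-point count, executed uniformly across types. For the classical types it is combinatorial but delicate: the symmetry of the central polygon cell imposes divisibility constraints (the role of $d\mid k$ in \propref{prop:catalanatroots}) that must line up exactly with the cyclotomic vanishing pattern on the right. For the exceptional types the absence of any geometric model is the real difficulty, forcing one either to extract the count from the regular-element structure of $W$ acting on the cluster fan --- the conceptually hard route --- or to fall back on explicit computation. Reconciling the two sides then reduces to checking that the regular-element product and the cyclotomic evaluation agree term by term, together with bookkeeping of the order $n$ of $R$, where the parity of $h+2$ introduces the same even/odd subtlety seen throughout this paper.
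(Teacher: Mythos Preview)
The present paper does not supply its own proof of this statement: \thmref{thm:cycalltypes} is quoted verbatim from Eu and Fu, and the surrounding text only remarks that their result covers faces of all dimensions while this paper restricts to facets. So there is no ``paper's proof'' to compare against here; your proposal should instead be measured against Eu--Fu's original argument.

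Your outline tracks their strategy faithfully at the classical end: evaluate $\Cat(\Phi,q)$ at roots of unity via cyclotomic factorization, and match against fixed-point counts obtained from the polygon realizations of $\Delta(\Phi)$ in types $A$, $B/C$, $D$. That part is sound as a plan. The one place to be cautious is your invocation of Springer's regular-element theory for the exceptional types. There is no known uniform argument that extracts the number of $R^\ell$-invariant \emph{clusters} directly from regular-element structure; the analogous story works cleanly for noncrossing partitions (via Bessis--Reiner), but cluster complexes are a different $W$-set and the correspondence is not automatic. Eu and Fu handle $E_6,E_7,E_8,F_4,H_3,H_4$ by explicit computation, not by Springer theory. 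Your fallback --- direct enumeration for the exceptionals --- is exactly what is required, so the proposal is salvageable, but as written the Springer step is a gap rather than a shortcut.
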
 
Eu and Fu actually consider cyclic sieving for faces of the cluster complex of any dimension, whereas we only consider the top-dimensional faces, corresponding to $k$-angulations. An interesting direction to extend our result further would be to consider dihedral sieving for $k$-divisible dissections, corresponding to faces of any dimension. 

We need to introduce a $t$ parameter in order to exhibit dihedral sieving. We'll use the following definition, which includes assuming a conjecture of C. Stump~\cite{stump2010qtfcat}.

\begin{definition}[cf. {\cite[Conjectures 3 and 4]{stump2010qtfcat}}]\label{def:qtcatW} Let $\Phi$ be a root system with root poset $P$ and Weyl group $W$, which acts on an $n$-dimensional complex space $V$. Then $W$ acts diagonally on $\mb C[V\oplus V^*] = \mb C[\mathbf{x},\mathbf{y}]$. Let $\mc A$ be the ideal of $\mb C[\mathbf{x},\mathbf{y}]$ generated by the polynomials $p$ such that $w(p) = \det(w)p$ for all $w\in W$. The $q,t$-$\Cat(\Phi)$ polynomial is defined as a Hilbert series graded in $q$ and $t$:
\[\Cat(\Phi,q,t) \coloneqq \mc H(\mc A/\langle \mathbf{x},\mathbf{y}\rangle \mc A; q,t). \]

This polynomial conjecturally satisfies the following two specializations: \[\Cat(\Phi,q,q^{-1}) = \Cat(\Phi,q)\] and \[\Cat(\Phi,q,1) = \sum_{I \in J(P)} q^{|I|}.\]

For our purposes, the only relevant property of the $q,t$-$\Cat(\Phi)$ polynomials are these specializations, which we will assume to hold.
\end{definition}

The first specialization takes care of the cyclic part for us, due to \thmref{thm:cycalltypes}. It only remains for us to show the following claim, for each type $\Phi$.
\begin{claim}\label{claim:checkthis}
For a root system $\Phi$ with root poset $P$, and $\Delta(\Phi)$ with odd-$n$ dihedral action, we have that $\sum\limits_{I \in J(P)} (-1)^{|I|}$ counts the number of facets in $\Delta(\Phi)$ fixed by $\tau_+$, or by $\tau_-$.
\end{claim}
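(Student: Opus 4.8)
\textbf{Proof plan for Claim~\ref{claim:checkthis}.}
The plan is to treat each type separately, using in each case an explicit combinatorial realization of $\Delta(\Phi)$ together with its $\tau_\pm$-action, and an explicit description of the order ideals $J(P)$ of the root poset. The quantity $\sum_{I\in J(P)}(-1)^{|I|}$ is, by inclusion–exclusion / the Euler characteristic of the order complex viewpoint, simply the value at $q=-1$ of the rank–generating function $\sum_{I\in J(P)}q^{|I|}$, which is the second Stump specialization $\Cat(\Phi,q,1)$; so what must be checked is that this evaluation equals the number of facets of $\Delta(\Phi)$ fixed by $\tau_+$ (equivalently $\tau_-$, since $\tau_+$ and $\tau_-$ are conjugate in $I_2(n)$ and the fixed-point counts of conjugate elements agree). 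For the classical types I would proceed as follows. In type $A_{n}$, facets are triangulations of an $(n+3)$-gon and $\tau_+$ is a reflection of that polygon; the fixed-facet count was already computed as a Raney number $R_{2,1}\big(\tfrac{m-1}{2}\big)$-type expression in \thmref{thm:reflectioncounting}, and \thmref{thm:dyckdiff} (with $s=1$) identifies it with $D_1(m)$, the signed count of Dyck paths, i.e.\ with $\Cat_{n+2,n+3}(q,1)$ at $q=-1$; it remains to match $\sum_{I\in J(P)}q^{|I|}$ for the type-$A$ root poset against this Dyck-path generating function, which is classical (the type-$A$ root poset order ideals are counted by the Narayana/Catalan statistics and their generating function is the $q$-Catalan number appearing in \thmref{thm:cycalltypes}, so at $q=-1$ the telescoping of even/odd ideals reproduces exactly $D_1(m)$).

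For types $B/C$ and $D$, I would use the polygon-dissection models recalled in \secref{sec:ccdefs} (centrally symmetric triangulations of a $2n$-gon for $B_n$, and the analogous punctured/centrally symmetric model for $D_n$), where again $\tau_+$ acts as a geometric reflection, count the reflection-fixed facets directly by the same dual-tree/coral-diagram bookkeeping used in \thmref{thm:reflectioncounting} and \thmref{thm:rotationcounting}, and on the other side write down the $B_n$ and $D_n$ root posets explicitly (these are small, combinatorially transparent posets — a ``staircase'' with a doubled bottom for $D$, a staircase for $B$), compute $\sum_{I\in J(P)}(-1)^{|I|}$ by a short generating-function or direct sign-reversing-involution argument, and verify the two numbers coincide. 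For type $I_2(n)$ itself the cluster complex is just an $n$-cycle (the ``$n$-gon triangulations'' degenerate to choosing a single diagonal, so $\Delta(I_2(n))$ has $n$ facets), $\tau_+$ fixes exactly one of them when $n$ is odd, and the $I_2(n)$ root poset has exactly two incomparable elements, so $J(P)$ has four ideals with sizes $0,1,1,2$ and $\sum(-1)^{|I|}=1$ — matching. For the exceptional types $E_{6,7,8}$ and $F_4$ there is no polygon model, so there I would instead invoke the known description of $\tau_\pm$ as a specific reflection in the reflection group acting on $\Phi_{\ge -1}$, compute its fixed clusters either by the explicit enumeration of Fomin–Zelevinsky (the ``$\Phi_{\ge-1}$ with the $\tau_\pm$-orbit structure'' is fully tabulated) or, more efficiently, by appealing to the fact that $\tau_+$-fixed facets of $\Delta(\Phi)$ are in bijection with facets of the cluster complex of the folded (smaller-rank) root subsystem fixed by the relevant diagram automorphism — reducing $E_6\to F_4$, $D_n\to B_{n-1}$, $A_{2n-1}\to B_n$, $A_{2n}\to$ (type $BC_n$-flavored), etc. — and checking the poset side $\sum_{I\in J(P)}(-1)^{|I|}$ against the root-poset-ideal count of that folded system, which again is a finite check.

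The main obstacle I anticipate is the second half of the type-independent identity: relating the \emph{alternating sum of order-ideal sizes of the root poset} to a \emph{fixed-point count of a reflection on the cluster complex}, uniformly. The first specialization of $\Cat(\Phi,q,t)$ handles rotations via \thmref{thm:cycalltypes} and is essentially a black box, but the $t=1$ specialization gives us only $\sum_{I\in J(P)}q^{|I|}$, and there is no a priori reason its value at $q=-1$ should be the $\tau_+$-fixed-facet count except through the "miracle" that underlies dihedral sieving. I would try to prove this uniformly via the theory of the \emph{cyclic sieving on $J(P)$} / the fact that $\#J(P)=\Cat(\Phi)$ together with a \emph{sign-reversing involution on non-$\tau_+$-symmetric facets} paired with an \emph{involution on $J(P)$ whose fixed points are the "balanced" ideals}; concretely, the plan is to build an explicit bijection between $\tau_+$-symmetric facets of $\Delta(\Phi)$ and the order ideals of the root poset of the folded subsystem, and separately a sign-reversing involution on the non-fixed ideals of $J(P)$, so that $\sum_{I\in J(P)}(-1)^{|I|}=\#\{\text{balanced ideals}\}=\#J(P_{\mathrm{folded}})=\#\{\tau_+\text{-fixed facets}\}$. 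If a uniform involution resists, the fallback — and probably what I would actually carry out — is the case-by-case verification above: classical types by the polygon/coral-diagram method already developed in \secref{sec:kangA}, $I_2(n)$ by hand, and $E_6, E_7, E_8, F_4$ by the folding reduction plus a finite tabulation of order ideals of the (small) folded root posets.
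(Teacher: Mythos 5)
Your overall strategy --- a case-by-case verification using the polygon models for the classical types, a hands-on check for type $I$, and a finite computation for the exceptional types --- is the same route the paper takes (the paper also makes no attempt at a uniform folding argument). However, two concrete pieces are wrong as written. First, type $I$: the root poset of $I_2(n)$ used here (from \cite{CP2015root}; the paper's $I^{(n)}$) is not a two-element antichain. It has one element per positive root, hence $n$ elements, with the two simple roots incomparable at the bottom and a chain of the remaining $n-2$ roots above them; consequently $J(P)$ has $n+2$ ideals (as it must, since $|J(P)|=\Cat(\Phi,1,1)$ has to equal the number of clusters, which is $n+2$, not $n$ as you state), and the alternating sum is $-1$. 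Your claimed poset with ideals of sizes $0,1,1,2$ would in fact give alternating sum $0$, not $1$, so as written your type-$I$ verification fails even internally against the one $\tau_\eps$-fixed cluster you correctly identify; with the correct poset the two sides agree up to sign, which is all the paper claims. Second, exceptional types: the proposed bijection between $\tau_+$-fixed facets and clusters of a folded subsystem is unsupported --- $\tau_+$ is not induced by a diagram automorphism, and $E_7$, $E_8$ (and $F_4$) admit no nontrivial diagram automorphism to fold along, so ``reducing $E_7,E_8$'' this way is simply unavailable. Your fallback of explicit finite enumeration is what the paper actually does (Sage computation of $\Cat(W,1,-1)$ and of the $\tau_\eps$-fixed facets, \figref{fig:exceptional data}), so the claim survives, but only through that computation.

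For types $B/C$ and $D$ your plan is compatible in spirit but misses what actually happens: in the parity relevant to odd dihedral sieving both sides are zero. The paper proves that no facet of $\Delta(B_n)$ (for the relevant parity of $n$) or $\Delta(D_n)$ is fixed by $\tau_\eps$ (\lemref{lem:noBfix}, \lemref{lem:noDfix}: a symmetric triangulation must contain a diameter, and one analyzes whether the reflection axis passes through vertices), and separately that the alternating sums over the trapezoid poset $T_{n,2n}$ and the double triangle poset vanish via a toggling, i.e.\ sign-reversing, involution on order ideals (\lemref{lem:pB0} and its type-$D$ analogue) --- note these posets are not quite the ``staircase'' shapes you describe. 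Your coral-diagram bookkeeping would presumably uncover the vanishing, and your proposed sign-reversing involution on the poset side is precisely the paper's toggling argument, so this part is a matter of carrying out details rather than a wrong idea; the genuine gaps are the type-$I$ poset and the folding shortcut for the exceptional types.
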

For $\Phi$ of type $I$, despite being non-crystallographic, there is still a nice choice of poset $P$ which we can call the root poset as shown in~\cite{CP2015root}, 
so we will do that case too. The remainder of this section is dedicated to verifying \claimref{claim:checkthis} for each type in turn. For types $B$ and $D$, we will first describe a realization of facets in $\Delta(\Phi)$ as certain decorated triangulations. 

\subsection{Type $B_n$/$C_n$ Cluster Complex}

In \cite{fr2005generalized}, a combinatorial realization of $\Delta(\Phi)$ for type $B_n$ is given as follows (and it is the same for $C_n$). Let $P$ be a centrally symmetric regular polygon with $2n+2$ vertices. The vertices of the complex are of two types:
\begin{itemize}
    \item The diameters of $P$, i.e., the diagonals connecting antipodal vertices;
    \item The pairs $(D, D')$ of distinct diagonals of $P$ such that $D$ is related to $D'$ by a half-turn about the center of $P$.
\end{itemize}
Two vertices are called noncrossing if no diagonal representing one vertex crosses a diagonal representing the second vertex. The faces of the complex are the sets of pairwise noncrossing vertices. Therefore the maximal faces correspond to centrally symmetric triangulations of $P$.

Moreover, the action of the reflections $\tau_-,\tau_+$ on a type $B_n$ face is in fact reflection in the usual sense \cite[Proposition 3.15 (1)]{fz2003Ysystems}.

See \figref{fig:b4} for an example. 

\begin{figure}
    \centering
    \includegraphics[scale=2]{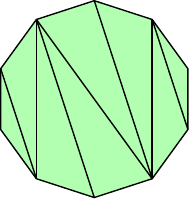}\quad\includegraphics[scale=2]{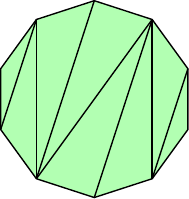}
    \caption{Two maximal faces for type $B_4/C_4$ acted on by $\tau_+$.}
    \label{fig:b4}
\end{figure}

\begin{lemma}\label{lem:noBfix}
If $F$ is a facet for $\Delta(B_n)$ for $n$ even and $\eps \in\{+,-\}$, then $\tau_\eps(F) \neq F$.  
\end{lemma}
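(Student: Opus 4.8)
The plan is to work entirely inside the polygon realization recalled above: a facet $F$ of $\Delta(B_n)$ is a centrally symmetric triangulation of the regular $(2n+2)$-gon $P$, and $\tau_\epsilon$ acts on $F$ as a genuine Euclidean reflection of $P$ across one of its axes of symmetry (\cite[Proposition 3.15 (1)]{fz2003Ysystems}). I would first establish two structural facts. The first is that every centrally symmetric triangulation of $P$ contains exactly one \emph{diameter} (a diagonal joining antipodal vertices): it contains at least one, because the center of $P$ cannot lie in the interior of a triangle $T$ of $F$ — otherwise $T$ would be invariant under the central point-reflection $\rho_\pi$, which acts without fixed points on the vertices of $P$, whereas no involution can permute a $3$-element set without a fixed point — so the center lies on a diagonal of $F$, necessarily a diameter; and it contains at most one, since two distinct diameters meet at the center, violating the noncrossing condition. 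The second fact is a parity observation: since $n$ is even, $2n+2 \equiv 2 \pmod 4$, so every axis of symmetry of $P$ is either a \emph{vertex-axis}, through two opposite vertices and fixing exactly those two vertices, or an \emph{edge-axis}, through two opposite edge-midpoints and fixing no vertex of $P$; moreover the perpendicular of a vertex-axis is an edge-axis and conversely, and both $L$ and its perpendicular through the center are symmetry axes (here using that $2n+2$ is even).

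With these in hand I would suppose, toward a contradiction, that $\tau_\epsilon(F) = F$. Then $\tau_\epsilon$ sends the unique diameter $D$ of $F$ to the unique diameter of $\tau_\epsilon(F) = F$, so $\tau_\epsilon(D) = D$, which forces the axis of $\tau_\epsilon$ to be either the line spanned by $D$ (a vertex-axis) or the perpendicular bisector of $D$. Since $F$ is centrally symmetric it is also fixed by $\rho_\pi$, hence by the reflection $\sigma \coloneqq \rho_\pi \tau_\epsilon$, whose axis is perpendicular to that of $\tau_\epsilon$. By the parity fact, exactly one of $\tau_\epsilon$ and $\sigma$ is an edge-axis reflection; call it $\sigma'$, so $\sigma'$ fixes no vertex of $P$ while $\sigma'(F) = F$ and $\sigma'(D) = D$. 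As $\sigma'$ is an edge-axis, its axis is not the (vertex-axis) line spanned by $D$, so it must be the perpendicular bisector of $D$; since $D$ is a diameter, this axis passes through the center of $P$ perpendicular to $D$, and therefore preserves each of the two sides of the line of $D$. Hence $\sigma'$ fixes setwise the unique triangle of $F$ lying on a given side of $D$. But $\sigma'$ is an involution, so it must fix at least one of the three vertices of that triangle — contradicting the fact that an edge-axis reflection fixes no vertex of $P$. Therefore $\tau_\epsilon(F) \neq F$.

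The genuinely delicate ingredients are the parity bookkeeping in the second structural fact — namely determining exactly which reflections of a $(2n+2)$-gon with $2n+2 \equiv 2 \pmod 4$ fix a vertex, and checking this is precisely where the hypothesis ``$n$ even'' is used — together with the elementary but essential observation that an involution fixing a triangle setwise must fix one of its vertices. Everything else is immediate from the polygon picture and should be routine to fill in.
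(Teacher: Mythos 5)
Your proof is correct and follows essentially the same route as the paper's: realize $F$ as a centrally symmetric triangulation of the $(2n+2)$-gon, locate the (unique) diameter $D$ that $F$ must contain, and reduce to the case of a reflection whose axis passes through no vertex by composing with the central half-turn --- this is exactly the paper's reduction of its vertex-axis case to its edge-axis case. The only genuine difference is the final contradiction: the paper argues that the triangulation of the half-polygon on one side of $D$ must contain a diagonal crossing the axis, whose image would create a crossing, whereas you look at the triangle of $F$ having $D$ as an edge on one side of $D$ and observe that a vertex-free reflection fixing that triangle setwise would have to fix its apex (an involution on three vertices has a fixed point). Your version is slightly cleaner, since it sidesteps the possibility that a diagonal crossing the axis is itself symmetric (hence sent to itself rather than to ``another'' diagonal), a case the paper's wording glosses over. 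One wording slip to fix: ``the unique triangle of $F$ lying on a given side of $D$'' should read ``the unique triangle of $F$ having $D$ as an edge on a given side of $D$'' --- there are of course many triangles on each side --- but the following sentence makes the intended meaning clear, and with that correction the argument is complete, including the correct identification of where the hypothesis that $n$ is even (i.e.\ $2n+2\equiv 2 \pmod 4$, so perpendicular symmetry axes have opposite types) enters.
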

\begin{proof}
We realize $F$ as a centrally symmetric triangulation of a $(2n+2)$-gon. Then $F$ must contain some diameter $D$, otherwise the central triangle would not be centrally symmetric by a half-turn.

Let $A$ be the axis of reflection of $\tau_\eps$. If $A$ does not go through vertices, then it must be perpendicular to the diameter contained in $F$, otherwise the reflection about $A$ cannot fix $F$. Now, any triangulation  of the $n+2$ vertices not on the right of $D$ must contain some diagonal which crosses the axis $A$ (because $n$ is even). This diagonal will not be sent to another diagonal under $\tau_\eps$, since if it was, then the configuration would not be non-crossing.

Next, suppose that $A$ does go through vertices. In order to fix the diameter, the axis $A$ and the diameter $D$ must coincide. Then the left and right sides of $D$ must be symmetric, in addition to having the half-turn central symmetry. This implies that the $F$ must also be fixed by the reflection perpendicular to $A$, which was covered in the previous case. Therefore $F$ cannot be fixed by $\tau_{\eps}$.
\end{proof}

Now it just remains to show that there are equally many order ideals $B_n^{+} = T_{n,2n}$ of each parity. (See \figref{trap poset ex} for an example of such a poset.)

\begin{figure}
\centering
\begin{tikzpicture}[scale=1.5]
	\SetFancyGraph
	\Vertex[NoLabel,x=0,y=0]{1}
	\Vertex[NoLabel,x=1,y=0]{2}
	\Vertex[NoLabel,x=2,y=0]{3}
	\Vertex[NoLabel,x=3,y=0]{4}
	\Vertex[NoLabel,x=0.5,y=0.5]{5}
	\Vertex[NoLabel,x=1.5,y=0.5]{6}
	\Vertex[NoLabel,x=2.5,y=0.5]{7}
	\Vertex[NoLabel,x=1,y=1]{8}
	\Vertex[NoLabel,x=2,y=1]{9}
	\Vertex[NoLabel,x=1.5,y=1.5]{10}
	\Vertex[NoLabel,x=0,y=1]{11}
	\Vertex[NoLabel,x=0.5,y=1.5]{12}
	\Vertex[NoLabel,x=0,y=2]{13}
	\Vertex[NoLabel,x=1,y=2]{14}
	\Vertex[NoLabel,x=0.5,y=2.5]{15}
	\Vertex[NoLabel,x=0,y=3]{16}
	\Edges[style={red,ultra thick}](1,5)
	\Edges[style={red,ultra thick}](2,5)
	\Edges[style={thick}](2,6)
	\Edges[style={thick}](3,6)
	\Edges[style={red,ultra thick}](3,7)
	\Edges[style={red,ultra thick}](4,7)
	\Edges[style={thick}](5,8)
	\Edges[style={thick}](6,8)
	\Edges[style={thick}](6,9)
	\Edges[style={thick}](7,9)
	\Edges[style={thick}](8,10)
	\Edges[style={thick}](9,10)
	\Edges[style={red,ultra thick}](5,11)
	\Edges[style={thick}](11,12)
	\Edges[style={thick}](8,12)
	\Edges[style={thick}](12,13)
	\Edges[style={thick}](12,14)
	\Edges[style={thick}](10,14)
	\Edges[style={thick}](13,15)
	\Edges[style={thick}](14,15)
	\Edges[style={thick}](15,16)
\end{tikzpicture}
\caption{$T_{4,8}$ with an order ideal highlighted in red. }
\label{trap poset ex}
\end{figure}

\begin{lemma}\label{lem:pB0}
For $T_{n,2n}$ a trapezoid poset and $J(T_{n,2n})$ its set of order ideals, $$\sum\limits_{I\in J(T_{n,2n})}(-1)^{\abs{I}}=0.$$
\end{lemma}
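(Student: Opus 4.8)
The plan is to prove $\sum_{I\in J(T_{n,2n})}(-1)^{|I|}=0$ by exhibiting, recursively, a cardinality-parity-reversing involution on order ideals. Write $f(P)\coloneqq\sum_{I\in J(P)}(-1)^{|I|}$ for a finite poset $P$. The basic tools are: $f$ is multiplicative over disjoint unions, with $f(\emptyset)=1$ and $f(\{\mathrm{pt}\})=0$; and, for any minimal $v\in P$, splitting ideals according to whether they contain $v$ gives
\[
f(P)=f(P\setminus F_v)-f(P\setminus\{v\}),\qquad F_v=\{x\in P:x\ge v\},
\]
because the ideals avoiding $v$ are exactly the ideals of $P\setminus F_v$, while the ideals containing $v$ biject with $J(P\setminus\{v\})$ by deleting $v$, which changes $|I|$ by one. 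I will also use a ``hook-removal'' case: if $v$ is minimal, $w$ is the unique cover of $v$, and $w$ covers only $v$, then the ideals missing $v$ cancel against those containing $v$ but not $w$, and the rest biject with $J(P\setminus\{v,w\})$, so $f(P)=f(P\setminus\{v,w\})$.

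I would then work in the concrete model of $T_{n,2n}$ as the type $B_n$ (equivalently $C_n$) root poset, whose $n^2$ positive roots split into the difference roots $e_i-e_j$ ($i<j$), forming a type $A_{n-1}$ staircase; the short roots $e_1>e_2>\cdots>e_n$, forming a chain; and the sum roots $e_i+e_j$ ($i<j$), with the only cross-relations being $e_i-e_j<e_k$ and $e_i-e_j<e_k+e_\ell$ for $k\le i$, and $e_i<e_k+e_\ell$ for $k\le i$. Peeling at $v=e_{n-1}-e_n$: the filter $F_v$ is the set of roots whose simple-root expansion uses $\alpha_{n-1}$, and $T_{n,2n}\setminus F_v$ is the disjoint union of a type $A_{n-2}$ staircase and the isolated point $e_n$, so $f(T_{n,2n}\setminus F_v)=0$ and hence $f(T_{n,2n})=-f(T_{n,2n}\setminus\{e_{n-1}-e_n\})$. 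In that punctured poset $e_n$ is covered only by $e_{n-1}$, which now covers only $e_n$, so hook-removal gives
\[
f(T_{n,2n})=-f\bigl(T_{n,2n}\setminus\{e_{n-1}-e_n,\;e_{n-1},\;e_n\}\bigr).
\]

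From here I would induct on $n$ — the base case $n=1$ ($T_{1,2}$ a point) being trivial — continuing to peel minimal elements of the punctured poset, starting with the now-minimal sum root $e_{n-1}+e_n$. The honest difficulty is that the punctured poset is no longer a trapezoid poset, and the resulting recursion branches among several non-trapezoidal posets rather than collapsing to $T_{n-1,2n-2}$: the step I expect to be the main obstacle is identifying an auxiliary family of posets closed under the peels above on which the induction can actually be run, and checking each residual summand. Two alternatives I would consider: producing a single explicit sign-reversing involution on $J(T_{n,2n})$ directly (note that order-complementation will not serve, since the rank sizes $n,n-1,n-1,n-2,n-2,\dots,1,1$ of $T_{n,2n}$ are not palindromic, so $T_{n,2n}$ is not self-dual); or, in the spirit of \secref{sec:kangA}, placing ``markers'' on the poset and running a recursion on signed order-ideal counts analogous to \propref{prop:dslm recur}, with the vanishing falling out of a parity cancellation.
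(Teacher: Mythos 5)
Your preliminary reductions are all correct: the identity $f(P)=f(P\setminus F_v)-f(P\setminus\{v\})$ at a minimal element, the hook-removal identity, the observation that the filter above $e_{n-1}-e_n$ in the $B_n$ root poset is exactly the set of roots whose expansion uses $\alpha_{n-1}$ (so its complement is an $A_{n-2}$ staircase together with the isolated point $e_n$, whose signed count is $0$ because of the isolated-point factor), and hence
\[
f(T_{n,2n})=-f\bigl(T_{n,2n}\setminus\{e_{n-1}-e_n,\;e_{n-1},\;e_n\}\bigr).
\]
But this is precisely where the proposal stops being a proof: you give no argument that the right-hand side vanishes, and, as you acknowledge, the punctured poset is no longer a trapezoid, the subsequent peels branch, and you have not identified an auxiliary family of posets closed under the peeling on which an induction could actually be run. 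That missing step is the entire content of the lemma, so what you have is a correct first reduction plus a plan. The two fallback routes you mention --- a single explicit sign-reversing involution, or a marker recursion in the spirit of \propref{prop:dslm recur} --- are named but not carried out, so they do not close the gap either.

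For comparison, the paper's argument never leaves the trapezoid family, which is how it sidesteps your branching problem: the top $(n-1)^2$ elements of $T_{n,2n}$ form a copy of $T_{n-1,2n-2}$ whose complement is the bottom two ranks; order ideals are grouped by their trace on that copy, and within each group ideals of opposite parity are paired off by toggling one well-chosen element --- an element of the bottom two ranks that is neither in the copy nor forced by the trace when such an element exists, and otherwise (when the trace forces the whole bottom) an element of the bottom row of the embedded copy, with the base case $n=1$ checked directly. If you want to salvage your peeling strategy, the analogous move is to peel the entire bottom two ranks in one step rather than root by root, so that the residual posets are again trapezoids and the induction on $n$ closes; as it stands, the key inductive step is missing.
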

\begin{proof}
We proceed by induction on $n$. The case $n=1$ follows by computation. We note that the upper $\abs{T_{n-1,2n-2}}$-many elements of $T_{n,2n}$ are precisely the subposet $T\cong T_{n-1,2n-2}$, say via the poset injection $f$. For $I\in J(T_{n-1,2n-2})$, let $F(I)\subset T_{n,2n}\setminus T$ be defined such that for each $\ga\in F(I)$, there is $\gb\in I$ with $\ga\le f(\gb)$. (Elements of $F(I)$ are ``forced'' to appear in the order ideal of $T_{2n,n}$ corresponding to $f(I)$.) For $I\in J(T_{n-1,2n-2})$, there are two cases: $F(I)\subsetneq T_{n,2n}\setminus T$, and $F(I)=T_{n,2n}\setminus T$. 

In the former case, we can ``toggle'' any element $\ga\in T_{n,2n}\setminus(T\cup F(I))$, i.e., we consider all order ideals containing $f(I)\cup F(I)$ as well as additional elements of $T_{n,2n}\setminus T$ but not $\ga$, as well as all order ideals containing $f(I)\cup F(I)\cup\{\ga\}$ as well as additional elements of $T_{n,2n}\setminus T$. It is clear that these sets are of the same size and their elements pair up, with sizes of opposing parities. 

In the latter case, we may toggle any element of the bottom row of $T$ other that one with down-degree 1 to again obtain two sets of the same size whose elements pair up, having sizes of opposing parities. 
\end{proof}

\subsection{Type $D_n$ Cluster Complex}

In \cite{fr2005generalized}, a combinatorial realization of $\Delta(\Phi)$ for type $D_n$ is given as follows.
Let $P$ be a regular polygon with $2n$ vertices. The vertices of the combinatorial realization of $\Delta(D_n)$ will fall into two groups. The vertices in the first group corresponds one-to-one to pairs of distinct non-diameter diagonals in $P$ related by a half turn. In the second group, each vertex is indexed by a diameter of $P$, together with two \emph{flavors}, which we call ``red'' and ``blue.'' Thus each diameter occurs twice, in each of the two flavors. We label the vertices of $P$ counterclockwise by $\{1,\ldots, n, -1,\ldots, -n\}$ and we call $[1, -1]$ the {\em primary diameter}. By construction, the map $R$ acts by rotating $2$ clockwise to $1$ and switching the flavors of certain diagonals. Specifically, $R$ preserves flavor when applied to a diameter of form $[k,-k]$ for $1\leq k\leq n$ unless $k = 1$. If $k=1 $, the flavor is switched.

The notion of compatibility for the cluster complex of type $D_n$ is defined as follows. Two vertices at least one of which is not a diameter are compatible or not according to precisely the same rules as in the type $B_n$ case. A more complicated condition determines whether two diameters are compatible. Two diameters with the same location and different flavors are compatible. Two diameters at different locations are compatible if and only if applying $R$ repeatedly until either of them is in position $[1,-1]$ results in diameters of the same flavor. Explicitly, let $D$ be a flavored diameter and let $\overline{D}$ denote $D$ with its flavor reversed. Then for $1 \le k \le n-1$, the flavored diameter $R(D)$ is compatible with $D$ (and incompatible with $\overline{D}$) if and only if $k$ is even.

By definition, the maximal faces of the complex are the diagonals of $P$ that satisfy the following. Within the flavored diagonals, if they are not of same flavor, they are allowed to overlap; if they are of the same flavor, they are allowed to cross. Aside from within the flavored diagonals, flavored diagonals cannot cross non-flavored diagonals and non-flavored diagonals should be pairwise noncrossing.

In this case, the action of the reflections $\tau_-,\tau_+$ on a type $D_n$ face is not just to reflect the polygon, but also to reverse all of the flavors \cite[Proposition 3.16 (1)]{fz2003Ysystems}.

See \figref{fig:d5} for an example. 

\begin{figure}
    \centering
    \includegraphics[scale=2]{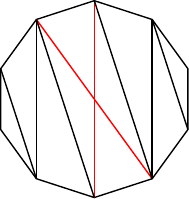}\quad\includegraphics[scale=2]{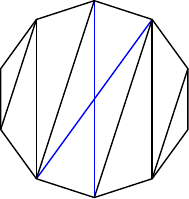}
    \caption{Two maximal faces in $D_5$ acted on by $\tau_+$.}
    \label{fig:d5}
\end{figure}

\begin{lemma}\label{lem:noDfix}
If $F$ is a facet for $\Delta(D_n)$ for $n$ odd and $\eps \in\{+,-\}$, then $\tau_\eps(F) \neq F$.  
\end{lemma}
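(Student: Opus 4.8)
\textbf{Proof plan for Lemma~\ref{lem:noDfix}.}

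The plan is to mimic the structure of the proof of Lemma~\ref{lem:noBfix}, but with careful bookkeeping of the flavor reversal, which is the new feature in type $D$. The key observation is that $\tau_\eps$ acts by reflecting the $2n$-gon $P$ across some axis $A$ \emph{and} reversing all flavors. First I would realize $F$ as a decorated triangulation of $P$ as in the Fomin--Reading model, and note that $F$ must contain at least one flavored diameter: since $n$ is odd the center of $P$ is an interior point of $P$ not on any non-diameter diagonal, so some diameter must be present, and in this model every diameter carries a flavor. Call this diameter $D$, with flavor $c$. If $\tau_\eps(F)=F$, then $\tau_\eps$ must send the set of flavored diameters of $F$ to itself while reversing each flavor, so in particular $D$ must be sent to a flavored diameter of $F$ whose underlying diameter is $A(D)$ but whose flavor is $\overline{c}$.

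The argument then splits into cases according to how the reflection axis $A$ meets the polygon. If $A$ is a perpendicular bisector of an edge (the ``no vertices on $A$'' case), then $A(D)$ is again a diameter present in $F$; if $A(D)=D$ then $F$ contains $D$ with flavor $c$ and is supposed to contain it with flavor $\overline c$ — but $F$ is a facet and cannot contain $D$ in both flavors together with everything else needed to triangulate (the two flavors of a diameter are compatible with each other but their simultaneous presence already uses up the central region, and the rest of $F$ on the two sides of $D$ would then have to be a centrally symmetric triangulation with an extra crossing constraint, which for $n$ odd forces a diagonal crossing $A$ that is not sent to a valid diagonal, exactly as in Lemma~\ref{lem:noBfix}). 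If $A(D)\ne D$ then $D$ and $A(D)$ are two diameters at different locations both lying in $F$, so they must be compatible, which by the type $D$ rule forces their flavors to agree after rotating one into primary position; but $\tau_\eps$ reverses flavors, so applying $\tau_\eps$ to the compatible pair $\{D,A(D)\}$ yields $\{A(D),D\}$ with \emph{both} flavors reversed, and one checks the flavor-compatibility parity condition is violated — contradiction. If instead $A$ passes through two antipodal vertices, then to fix the set of diameters $A$ must coincide with one of the diameters $D$ of $F$, and then the two halves of $F$ on either side of $D$ are exchanged by $\tau_\eps$ with flavors reversed; reasoning as in the $B_n$ case, this forces $F$ to also be invariant under the perpendicular reflection, reducing to the previous case.

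The main obstacle I expect is handling the flavored diameters cleanly: unlike type $B$, where ``reflection fixes $F$'' is a purely geometric statement, here one must track the flavor involution and verify that it is genuinely incompatible with reflection symmetry for \emph{every} configuration of diameters in $F$, not just a single central one (a facet of $\Delta(D_n)$ can contain a whole ``pinwheel'' of coincident diameters in alternating flavors plus one genuine crossing diameter). The cleanest route is probably to isolate the sub-configuration of $F$ consisting of all flavored diameters, observe it is determined by a cyclically consecutive block of diameter-positions with a prescribed flavor pattern dictated by the compatibility parity rule, and show this block cannot be invariant under any reflection of $P$ composed with flavor reversal when $n$ is odd — essentially a small parity computation on $\Z/2n$. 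Once that is in hand, the non-diameter part of the argument is identical to the type $B_n$ case and can be cited from the proof of Lemma~\ref{lem:noBfix}.
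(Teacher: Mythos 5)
Your overall architecture — force a diameter into $F$, use the flavor reversal built into $\tau_\eps$, and reduce the single-doubled-diameter configuration to \lemref{lem:noBfix} — is sound, but the write-up has two genuine gaps, both sitting exactly where you yourself locate ``the main obstacle.'' First, the facets containing two or more diameter locations (your ``pinwheel'' configurations) are never actually handled: you only sketch a plan (isolate the block of flavored diameters and do ``a small parity computation on $\Z/2n$'') without carrying it out, so the argument is incomplete in its hardest case. Second, the justification you offer in the subcase $A(D)\neq D$ does not work as stated: you cannot get a contradiction by ``applying $\tau_\eps$ to the compatible pair $\{D,A(D)\}$ and checking that the flavor-compatibility parity condition is violated,'' because $\tau_\eps$ is a simplicial automorphism of $\Delta(D_n)$ and therefore preserves compatibility; moreover, reversing \emph{both} flavors of a pair of diameters preserves whether their flavors agree, so no parity obstruction can appear at that step.

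Both gaps are closed by one elementary fact of the Fomin--Reading realization, which is also the engine of the paper's proof: two flavored diameters at \emph{different} locations are compatible only if they carry the \emph{same} flavor (different flavors may overlap at one location but may never cross). Granting this, your own case analysis finishes uniformly: if $\tau_\eps(F)=F$ and $D\in F$ is a diameter of flavor $c$, then $F$ contains $A(D)$ with flavor $\overline{c}$, which is incompatible with $D$ unless $A(D)=D$; since for $n$ odd exactly one diameter location is invariant under the reflection axis, every diameter of $F$ occupies that single location and occurs there in both flavors, which kills the pinwheels with no parity computation and leaves precisely the doubled-diameter case that reduces to \lemref{lem:noBfix}. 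The paper organizes the same content slightly differently: it first notes that all diameters of $F$ occurring in only one flavor must share that flavor (again by the different-flavors-cannot-cross rule), so the global flavor reversal of $\tau_\eps$ already rules out fixing such an $F$, and the remaining case is a single diameter doubled in both flavors, handled exactly as in type $B_n$. Finally, your parenthetical that a facet ``cannot contain $D$ in both flavors together with everything else needed to triangulate'' is misleading — the generic facet is exactly a doubled diameter plus a centrally symmetric triangulation; the obstruction in that case is the reflection argument of \lemref{lem:noBfix}, not any incompatibility.
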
\begin{proof}
Suppose $F$ contains a diameter which exists in one flavor but not the other. Then the reflection $\tau_\eps$ causes there to exist a diameter in the other flavor with the same property. But only one flavor can have such a diameter in the original configuration, so $\tau_\eps$ does not fix $F$.

There must be at least one diameter, for the same reason as for type $B_n$. This diameter exists in both flavors. Then no other diameter may exist, because differently-flavored diameters cannot cross. Therefore we are in exactly the same setting as in the proof of \lemref{lem:noBfix}, so we can conclude.
\end{proof}

Again we are left to count the order ideals of the root poset by parity. The type-$D$ poset is as described in the following Lemma.

\begin{figure}
\centering
\begin{tikzpicture}[scale=1.2]
	\SetFancyGraph
	\Vertex[NoLabel,x=0,y=0]{1}
	\Vertex[NoLabel,x=1,y=0]{2}
	\Vertex[NoLabel,x=2,y=0]{3}
	\Vertex[NoLabel,x=3,y=0]{4}
	\Vertex[NoLabel,x=4,y=0]{5}
	\Vertex[NoLabel,x=0.5,y=0.5]{6}
	\Vertex[NoLabel,x=1.5,y=0.5]{7}
	\Vertex[NoLabel,x=2.5,y=0.5]{8}
	\Vertex[NoLabel,x=3.5,y=0.5]{9}
	\Vertex[NoLabel,x=1,y=1]{10}
	\Vertex[NoLabel,x=2,y=1]{11}
	\Vertex[NoLabel,x=3,y=1]{12}
	\Vertex[NoLabel,x=1.5,y=1.5]{13}
	\Vertex[NoLabel,x=2.5,y=1.5]{14}
	\Vertex[NoLabel,x=2,y=2]{15}
	
	\Edges[style={red,ultra thick}](1,6)
	\Edges[style={red,ultra thick}](2,6)
	\Edges[style={blue,thick}](2,7)
	\Edges[style={blue,thick}](3,7)
	\Edges[style={blue,thick}](3,8)
	\Edges[style={blue,thick}](4,8)
	\Edges[style={red,ultra thick}](4,9)
	\Edges[style={red,ultra thick}](5,9)
	\Edges[style={blue,thick}](6,10)
	\Edges[style={blue,thick}](7,10)
	\Edges[style={blue,thick}](7,11)
	\Edges[style={blue,thick}](8,11)
	\Edges[style={blue,thick}](8,12)
	\Edges[style={blue,thick}](9,12)
	\Edges[style={blue,thick}](10,13)
	\Edges[style={blue,thick}](11,13)
	\Edges[style={blue,thick}](11,14)
	\Edges[style={blue,thick}](12,14)
	\Edges[style={blue,thick}](13,15)
	\Edges[style={blue,thick}](14,15)

	\Vertex[NoLabel,x=0.5,y=0]{1'}
	\Vertex[NoLabel,x=1,y=0.5]{2'}
	\Vertex[NoLabel,x=1.5,y=1]{3'}
	\Vertex[NoLabel,x=2,y=1.5]{4'}
	\Vertex[NoLabel,x=2.5,y=2]{5'}
	\Vertex[NoLabel,x=0.5,y=1]{6'}
	\Vertex[NoLabel,x=1,y=1.5]{7'}
	\Vertex[NoLabel,x=1.5,y=2]{8'}
	\Vertex[NoLabel,x=2,y=2.5]{9'}
	\Vertex[NoLabel,x=0.5,y=2]{10'}
	\Vertex[NoLabel,x=1,y=2.5]{11'}
	\Vertex[NoLabel,x=1.5,y=3]{12'}
	\Vertex[NoLabel,x=0.5,y=3]{13'}
	\Vertex[NoLabel,x=1,y=3.5]{14'}
	\Vertex[NoLabel,x=0.5,y=4]{15'}
	
	\Edges[style={red,ultra thick}](1',2')
	\Edges[style={violet,thick}](2',3')
	\Edges[style={violet,thick}](3',4')
	\Edges[style={violet,thick}](4',5')
	\Edges[style={violet,thick}](6',7')
	\Edges[style={violet,thick}](7',8')
	\Edges[style={violet,thick}](8',9')
	\Edges[style={violet,thick}](10',11')
	\Edges[style={violet,thick}](11',12')
	\Edges[style={violet,thick}](13',14')
	\Edges[style={violet,thick}](15',14')
	\Edges[style={violet,thick}](14',12')
	\Edges[style={violet,thick}](12',9')
	\Edges[style={violet,thick}](9',5')
	\Edges[style={violet,thick}](13',11')
	\Edges[style={violet,thick}](11',8')
	\Edges[style={violet,thick}](8',4')
	\Edges[style={violet,thick}](10',7')
	\Edges[style={violet,thick}](7',3')
	\Edges[style={violet,thick}](6',2')
	
	\Edges[style={red,ultra thick}](6,6')
	\Edges[style={red,ultra thick}](2,2')
	\Edges[style={thick}](7,3')
	\Edges[style={thick}](10,7')
	\Edges[style={thick}](11,4')
	\Edges[style={thick}](13,8')
	\Edges[style={thick}](14,5')
	\Edges[style={thick}](15,9')
\end{tikzpicture}
\caption{An example of a double triangle poset, with the two triangle posets colored differently and an order ideal indicated with thicker edges.}
\label{double triangle poste}
\end{figure}

\begin{lemma}
For $\hat{T}_n$ a {\em double triangle poset} --- i.e., two mirror-image copies of the same triangle poset, one placed over the other such that exactly two diagonal rows of nodes lie ``over'' each other and are correspondingly connected (see \figref{double triangle poste}) --- and $J(\hat{T}_n)$ its set of order ideals, we have that $$\sum\limits_{I\in J\lpr{\hat{T}_n}}(-1)^{\abs{I}}=0.$$
\end{lemma}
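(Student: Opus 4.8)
The statement is entirely about order ideals of a finite poset: we must show that $\sum_{I \in J(\hat{T}_n)} (-1)^{|I|} = 0$. The natural tool is the involution/toggle technique already used in the proof of \lemref{lem:pB0}, and I would set up an induction on $n$ that parallels that argument closely, since the double triangle poset $\hat{T}_n$ is built in a completely analogous layered fashion. The base case $n$ small is a finite computation. For the inductive step, I would identify the ``upper part'' of $\hat{T}_n$ above the two glued diagonal rows as a copy of $\hat{T}_{n-1}$ (or of a triangle poset, whichever bookkeeping turns out cleaner), via an explicit poset injection $f$, and then partition $J(\hat{T}_n)$ according to which order ideal $f(I)$ of the upper part is ``selected.''

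\textbf{Key steps.} First I would make precise the structure of $\hat{T}_n$: two mirror-image triangle posets $T$ and $T'$ (each isomorphic to the type-$A$ triangle poset on $\binom{n}{2}$ elements, say), identified along two diagonal rows, giving $|\hat{T}_n| = 2\binom{n}{2} - (\text{size of two diagonal rows})$. Second, for each order ideal $I$ of the upper subposet $\cong \hat{T}_{n-1}$, I would define the ``forced set'' $F(I)$ of elements of $\hat{T}_n \setminus (\text{upper subposet})$ that must lie below some element of $f(I)$; the order ideals of $\hat{T}_n$ restricting to $f(I)$ on top are exactly those containing $f(I) \cup F(I)$ and an arbitrary down-closed subset of the ``free'' region $\hat{T}_n \setminus (\text{upper} \cup F(I))$. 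Third, I would split into the two cases as in \lemref{lem:pB0}: if the free region is nonempty, toggle any one of its minimal-in-the-complement elements to pair ideals of opposite parity; if the free region is empty ($F(I)$ is everything below), then the contribution is $(-1)^{|f(I)| + |F(I)|}$, and I would need to match these leftover terms against $\sum_{I} (-1)^{|I|}$ for the upper copy $\hat{T}_{n-1}$ (up to a global sign coming from $|F(I)|$, which should be constant on the relevant set of $I$'s), closing the induction by the inductive hypothesis. Alternatively — and this may be cleaner — in the ``free region empty'' case one can toggle an element of the bottom row of the upper copy that has down-degree $1$, exactly mimicking the ``latter case'' of \lemref{lem:pB0}, so that \emph{every} $I$ gets paired and the sum is literally $0$ without invoking the inductive hypothesis for the residual terms.

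\textbf{Main obstacle.} The delicate point is the geometry of the gluing: because $\hat{T}_n$ is two triangle posets overlapped along \emph{two} diagonal rows (not one), the "forced region" $F(I)$ and the combinatorics of which elements have down-degree $1$ are more intricate than in the trapezoid case, and I expect the bulk of the work to be verifying that in the boundary case there is always a toggleable element — i.e., that one never gets stuck with a configuration in which $f(I) \cup F(I)$ already fills $\hat{T}_n$ and the upper copy's bottom row has no down-degree-$1$ element available. Handling the diamond shape at the top of the doubled poset (where the two copies meet and the structure is no longer a simple product) will require a small separate check. Once the toggling element is produced in all cases, the parity-flip and pairing are routine, and the sum telescopes to $0$.
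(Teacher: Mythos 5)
Your skeleton is the same as the paper's: the paper takes $T\cong\hat{T}_{n-1}$ to be $\hat{T}_n$ with its minimal and rightmost elements deleted, fibers the order ideals of $\hat{T}_n$ over their trace $f(I)$ on $T$, introduces the forced set $F(I)$, and when $F(I)\subsetneq\hat{T}_n\setminus T$ pairs ideals of opposite parity by toggling a minimal element of the free region, exactly as you propose. The gap is in the boundary case $F(I)=\hat{T}_n\setminus T$, where both of your concrete treatments have a flaw. In your primary route you want to match the leftover contributions against $\sum_{I\in J(\hat{T}_{n-1})}(-1)^{|I|}$ (up to the constant sign $(-1)^{|\hat{T}_n\setminus T|}$) and invoke the inductive hypothesis; but the leftover terms are indexed only by those $I$ whose forced set is \emph{all} of $\hat{T}_n\setminus T$ --- a proper subset of $J(\hat{T}_{n-1})$ (the empty ideal, for instance, is never of this form) --- so the inductive hypothesis says nothing about that restricted sum and the induction does not close as stated.

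Your alternative is the paper's actual argument, but with the down-degree condition inverted: the paper toggles an element of the bottom row of $T$ \emph{other than} the one of down-degree $1$, not that element itself. The exclusion is essential. If $x$ is the down-degree-$1$ element and $c$ the unique complement element it covers, then every element of $T$ lying above $c$ lies above $x$, so in any boundary-case ideal $x$ must belong to $I$, and removing it leaves $c$ unforced; the toggled ideal then falls into the other case, which has already been paired off by a free-region toggle, and the involution breaks. Toggling instead a bottom-row element other than this one (with a consistent choice so the pairing is an involution) keeps the toggled ideal in the boundary case, so the boundary ideals pair among themselves and, as in your "cleaner" variant, every ideal is matched with one of opposite parity and the sum vanishes with no induction needed. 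With that one correction your plan coincides with the paper's proof, which is itself terse about verifying that the chosen toggle is always available --- the delicate point you correctly flagged.
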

\begin{proof}
There is a copy $T$ of $\hat{T}_{n-1}$ lying inside of $\hat{T}_n$, which is $\hat{T}_n$ without its minimal and rightmost elements. 
Let $F(I)$ be defined as in the proof of \lemref{lem:pB0}, and let $O(I)\subset\hat{T}_n\setminus\hat{T}_{n-1}$ be defined such that for each $\ga\in O(I)$, there is $\ga\in I$ with $f(\gb)\le\ga$. Elements of $O(I)$ are ``optional'' to include in an order ideal of $\hat{T}_n$ containing $I$. Similarly to the proof of \lemref{lem:pB0}, we pair up order ideals which differ in size by one by toggling a single element. 

For $I\in J\lpr{\hat{T}_{n-1}}$, there are two cases: $F(I)\subsetneq\hat{T}_n\setminus T$ and $F(I)=\hat{T}_n\setminus T$. In the former case, we can toggle any minimal element $\ga\in\hat{T}_n\setminus(T\cup F(I))$. In the latter case, we may toggle any element of the bottom row of $T$ other than the one with down-degree 1. Therefore by pairing up order ideals of opposing parities, the sum vanishes. 
\end{proof}

\subsection{Type $I$} 

The type $I$ root system is not crystallographic, however, the construction of a cluster complex described in~\secref{sec:ccdefs} can still be carried out, as noted in \cite{fomin2005root} and \cite[\S2]{fr2005generalized}. We view the action on Type $I$'s cluster complex simply as the action of $\<\tau_-,\tau_+\>$ on $\Phi_{\ge-1}$, without any further combinatorial equivalences. The clusters are radially consecutive pairs of roots.

Arbitrarily label the simple roots $\ga_-$ and $\ga_+$. We shall show, without loss of generality, that $\tau_-$ has exactly one fixed cluster. Note that $-\ga_+$ is the only element of $\Phi_{\ge-1}$ on the same half-plane delineated by $\Span\{\ga_-\}$. Now, $I_-=\{\ga_-\}$, so $\tau_-=\gs_{\ga_-}$ and $-\ga_+$ is fixed while $\ga_-$ and $-\ga_-$ exchange places, in turn exchanging their clusters. Similarly, the other cluster containing $\ga_+$ switches places with the other cluster containing $-\ga_+$. Indeed, the only instance not nontrivially affected is the cluster whose positive span contains the normal vector to $\Span\{\ga_-\}$ not lying in the same half-plane as $-\ga_+$. Thus precisely this one cluster is fixed. 

It remains to count the order ideals by parity for type $I$. Here the poset comes from the construction in e.g. \cite{CP2015root}. See \figref{i2 poset ex} for an example. 

\begin{figure}
\centering
\begin{tikzpicture}[scale=1.5]
	\SetFancyGraph
	\Vertex[NoLabel,x=0,y=0]{1a}
	\Vertex[NoLabel,x=1,y=0]{1b}
	\Vertex[NoLabel,x=0.5,y=0.5]{2}
    \Vertex[NoLabel,x=0.5,y=1.5]{3}
	\Vertex[NoLabel,x=0.5,y=2.5]{4}
	\Vertex[NoLabel,x=0.5,y=3.5]{5}
	\Vertex[NoLabel,x=0.5,y=3.5]{6}
	\Edges[style={red,ultra thick}](1a,2)
	\Edges[style={red,ultra thick}](1b,2)
	\Edges[style={red,ultra thick}](2,3)
	\Edges[style={thick}](3,4)
	\Edges[style={thick}](4,5)
	\Edges[style={thick}](5,6)
\end{tikzpicture}
\caption{$I^{(7)}$ with an order ideal highlighted in red.}
\label{i2 poset ex}
\end{figure}

\begin{lemma}
Let $I^{(n)}$ be the line poset with two incomparable minimal elements, having $n$ total elements, with $J(I^{(n)})$ its set of order ideals. For odd $n$, $$\sum_{I\in J(I^{(n)})}(-1)^{\abs{I}}=-1.$$
\end{lemma}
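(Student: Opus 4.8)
The statement concerns the poset $I^{(n)}$ which is a chain of length $n-2$ sitting on top of two incomparable minimal elements, and asks to show $\sum_{I \in J(I^{(n)})} (-1)^{|I|} = -1$ for odd $n$.

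The plan is to enumerate the order ideals of $I^{(n)}$ directly, since the poset is extremely simple. Write the two minimal elements as $a$ and $b$, and let $c_1 < c_2 < \cdots < c_{n-2}$ be the chain above them, where $c_1$ covers both $a$ and $b$. An order ideal $I$ is downward closed, so it is determined by: (i) which of $a,b$ it contains, and (ii) how far up the chain it goes. If $I$ contains any $c_i$, it must contain $c_1$, hence both $a$ and $b$; in that case $I = \{a,b,c_1,\dots,c_j\}$ for some $1 \le j \le n-2$, contributing sign $(-1)^{j+2} = (-1)^j$. If $I$ contains no chain element, then $I$ is one of $\emptyset$, $\{a\}$, $\{b\}$, $\{a,b\}$, with signs $+1, -1, -1, +1$ respectively, summing to $0$.

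So the whole sum reduces to $\sum_{j=1}^{n-2} (-1)^j$. For $n$ odd, $n-2$ is odd, so this is $(-1) + (+1) + \cdots + (-1) = -1$ (an odd number of terms starting and ending with $-1$, pairing up all but the last). That gives the claim. I would also note in passing that for $n$ even this sum would be $0$, which matches the fact (mentioned in the type $I$ discussion) that the single fixed cluster under $\tau_-$ only exists for odd $n$ — this is a nice consistency check but not needed for the proof.

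I expect essentially no obstacle here: the only thing to be careful about is getting the sign bookkeeping right (the offset by $2$ from the two minimal elements, and the parity of the number of surviving terms). A brief induction on $n$ (adding one element to the top of the chain flips the running contribution) would be an equally clean alternative, but the direct computation is cleanest.

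\begin{proof}
Label the two minimal elements $a, b$ and the chain above them $c_1 \lessdot c_2 \lessdot \cdots \lessdot c_{n-2}$, where $c_1$ covers both $a$ and $b$. Every order ideal $I$ is determined by its intersection with $\{a,b\}$ together with the largest index $j$ for which $c_j \in I$ (or no chain elements at all). If $I$ contains some $c_i$, then since $I$ is downward closed it contains $c_1$ and hence both $a$ and $b$; thus $I = \{a,b,c_1,\dots,c_j\}$ for a unique $1 \le j \le n-2$, and $(-1)^{|I|} = (-1)^{j+2} = (-1)^j$. If $I$ contains no chain element, then $I \in \{\emptyset, \{a\}, \{b\}, \{a,b\}\}$, contributing $1 - 1 - 1 + 1 = 0$ to the sum. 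Therefore
\[
\sum_{I \in J(I^{(n)})} (-1)^{|I|} = \sum_{j=1}^{n-2} (-1)^j.
\]
When $n$ is odd, $n-2$ is odd, so this alternating sum of an odd number of terms beginning and ending with $-1$ equals $-1$, as claimed.
\end{proof}
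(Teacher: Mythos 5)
Your proof is correct and takes essentially the same route as the paper: a direct enumeration of the order ideals of $I^{(n)}$. The only cosmetic difference is how the cancellation is grouped --- the paper computes $1-2$ from the ideals of sizes $0$ and $1$ and lets the remaining $n-1$ terms cancel, whereas you cancel the four chain-free ideals ($\emptyset,\{a\},\{b\},\{a,b\}$) and are left with $\sum_{j=1}^{n-2}(-1)^j=-1$.
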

\begin{proof}
There is one order ideal of size 0 (the empty one), two of size 1 (one for each minimal element), one of size 3 (the minimal ones along with their parent), one of size 4, etc. Summing, we have $$1-2+\underbrace{1-1+\cdots +1 -1}_{\text{$n-1$ terms}}=-1.$$
\end{proof}

\subsection{Exceptional Types}

For the exceptional types $E_6,E_7,E_8,F_4$, we were able to compute both the polynomial and the number of facets fixed by reflection using Sage. This data is found in \figref{fig:exceptional data}.

\begin{figure}
    \centering
    \begin{tabular}{|r|c|c|c|c|}
    \hline $W$ & $E_6$ & $E_7$ & $E_8$ & $F_4$ \\\hline\hline
    $\Cat(W,1,-1)$ & $-5$ & 0 & 14 & 1 \\\hline
    \# fixed by $\tau_\eps$ & 5 & $0,24$ & 14 & 1 \\\hline
    \end{tabular}
    \caption{Data for symmetric case of dihedral sieving for exceptional types. }
    \label{fig:exceptional data}
\end{figure}

Since we are only able to show odd dihedral sieving, the difference between $\tau_+$ and $\tau_i$ for type $E_7$ is no surprise. We have now verified (up to sign) \claimref{claim:checkthis} for all the desired types, which completes the proof of \thmref{thm:alltypesoftriangles}.

\section{Even More Sieving}\label{sec:evenmore}

\subsection{Even-$n$ Dihedral Sieving}

As Rao and Suk remarked \cite[\S7]{rs2017dihedral}, the dihedral group $I_2(n) = \< r,s | r^n = s^2 = 1,\ rs = sr^{-1} \>$ is quite different for $n$ even. Most notably, there are two distinct, non-conjugate reflections $s$ and $rs$. Viewed as planar reflections on an $n$-gon, say that $s$ is a reflection whose reflecting line goes through two vertices, and $rs$ is a reflection whose reflecting line bisects two sides. The first instance of even-$n$ dihedral sieving ought to be the simplest $I_2(n)$-action, namely, acting on the set $X = [n]$. In the case of the cyclic group and the dihedral group $I_2(n)$, this example gave us the $q$-analogue and the $q,t$-analogue for $n$, respectively. For even $n$, we need to distinguish between the group elements $s$ and $rs$ somehow. The one-dimensional representation which does this is called $\chi_b$ in \cite{rs2017dihedral}, and is defined as follows: \[\chi_b(g) \coloneqq \begin{cases}\phantom{-}1 & g \in \< r^2,s \> \\ -1 & g \notin \< r^2,s \> \end{cases} .\]
When $n$ is odd, we have $ \< r^2,s \> = I_2(n)$, so this representation is nontrivial only in the even-$n$ case. The need to utilize this representation as well suggests that we need to use three-variable polynomials for even-$n$ dihedral sieving. The following definition does not quite fit into the framework of \defref{def:genSieve}, because it uses multiple representations, but it does still generalize our initial notion because in the odd case, we always have $b = 1$.

\begin{definition}[Dihedral Sieving Phenomenon]\label{def:evendisieve}
Suppose $X$ is a finite set acted on by the dihedral group $I_2(n)$, and $X(q,t,b)$ is a polynomial which is symmetric in $q$ and $t$. The pair $(X\ \acton \ I_2(n), X(q,t,b))$ has the \emph{dihedral sieving phenomenon} if for all $g \in I_2(n)$ with eigenvalues $\{\lambda_1,\lambda_2\}$ for $\rho_{\de}(g)$ and $\lambda_3$ for $\chi_b(g)$:

\[|\{x\in X : gx = x\}| = X(\lambda_1,\lambda_2,\lambda_3).\]
\end{definition}

With this definition in mind, we can define a sort of $q,t,b$-analogue of $n$ as follows:

\[\<n\>_{q,t,b}\coloneqq \begin{cases} \{n\}_{q,t} & \nu_2(n) = 0 \\ \lcr{\frac{n}{2}}_{q,t}+b\cdot \lcr{\frac{n}{2}}_{q,t} & \nu_2(n) = 1 \\ \lcr{\frac{n}{2}}_{q,t} + b \cdot \lcr{\frac{n}{4}}_{q,t} + b^2 \cdot \lcr{\frac{n}{4}}_{q,t} & \nu_2(n) = 2 \\ \lcr{\frac{n}{2}}_{q,t} + b \cdot \lcr{\frac{n}{4}+1}_{q,t} +  b^2 \cdot \lcr{\frac{n}{4}-1}_{q,t} & \nu_2(n) \geq 3, \end{cases}\]
\noindent
where $\nu_2(n)$ denotes the 2-adic valuation of $n$, the greatest exponent $\nu$ of 2 such that $2^\nu\mid n$. 
Since $I_2(n)$ is a subgroup of $S_n$, we can use instances of symmetric sieving to get instances of dihedral sieving. This $q,t,b$-analogue was defined with the aim of having the $n$ terms in the summation $\<n\>_{q,t,b}$ be exactly the eigenvalues of $\rho_{\perm}(g)$, when evaluated at $(q,t,b) = (\lambda_1,\lambda_2,\lambda_3)$ as in \defref{def:evendisieve}. 

\begin{definition}[Plethystic Substitution]
For a finite sum $S=\sum\limits_{i=1}^N s_i$ of monomials $s_i$ in a polynomial ring, such as $\Z[q,t,b]$, and a symmetric function $f(x_1,x_2,\dots)$, the \emph{plethystic substitution} of $S$ into $f$ is $f([S])=f(s_1,s_2,\dots,s_N,0,0,\ldots)$. 
\end{definition}

\begin{prop}\label{prop:evenmultisets}
The pair $\left(\multiset{[n]}{k}\ \acton \ I_2(n) , h_k([\<n\>_{q,t,b}])   \right) $ exhibits dihedral sieving for all $n \geq k \geq 0$. Here $h_k$ is the complete homogeneous polynomial in $n$ variables and $[\<n\>_{q,t,b}]$ denotes the plethystic substitution of $\<n\>_{q,t,b}$.
\end{prop}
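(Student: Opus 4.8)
The plan is to reduce the proposition, by standard representation theory, to a single multiset identity attached to each group element, and then to verify that identity by the short case check that the definition of $\<n\>_{q,t,b}$ was engineered to pass.

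\emph{Step 1: the representation-theoretic reduction.} First I would observe that the permutation representation $\mb C[\multiset{[n]}{k}]$ is isomorphic, as an $S_n$-representation (hence, by restriction, as an $I_2(n)$-representation), to the symmetric power $\mathrm{Sym}^k(\rho_{\perm})$, via the map sending a multiset $\{i_1,\dots,i_k\}$ to the monomial $x_{i_1}\cdots x_{i_k}$. If $g$ acts on $\rho_{\perm}$ with eigenvalues $\mu_1,\dots,\mu_n$, then $\sum_{k\ge0}\tr\!\lpr{g\mid\mathrm{Sym}^k(\rho_{\perm})}z^k=\prod_{i=1}^n(1-z\mu_i)^{-1}=\sum_{k\ge0}h_k(\mu_1,\dots,\mu_n)z^k$, so $\tr\!\lpr{g\mid\mathrm{Sym}^k(\rho_{\perm})}=h_k(\mu_1,\dots,\mu_n)$; since the trace of a permutation representation counts fixed points, $|\{x\in\multiset{[n]}{k}:gx=x\}|=h_k(\mu_1,\dots,\mu_n)$. (Concretely, if $g$ has cycle lengths $\ell_1,\dots,\ell_r$ on $[n]$, both sides equal $[z^k]\prod_j(1-z^{\ell_j})^{-1}$, the number of $g$-invariant $k$-multisets.) On the polynomial side, evaluating $h_k([\<n\>_{q,t,b}])$ at $(q,t,b)=(\lambda_1,\lambda_2,\lambda_3)$ yields $h_k$ applied to the $n$ numbers obtained by evaluating the $n$ monomials of $\<n\>_{q,t,b}$ there, and since $\<n\>_{q,t,b}$ is symmetric in $q$ and $t$ in each of the four cases, this is independent of which eigenvalue of $\rho_{\de}(g)$ is named $\lambda_1$. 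Hence it suffices to prove, for every $g\in I_2(n)$, that the multiset of values at $(q,t,b)=(\lambda_1,\lambda_2,\lambda_3)$ of the $n$ monomials of $\<n\>_{q,t,b}$ coincides with the multiset $\{\mu_1,\dots,\mu_n\}$ of eigenvalues of $\rho_{\perm}(g)$ --- exactly the design goal stated after the definition of $\<n\>_{q,t,b}$.

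\emph{Step 2, rotations.} For $g=r^j$ the eigenvalues of $\rho_{\perm}(r^j)$ are $\{\zeta^{jm}:0\le m<n\}$ (with $\zeta=e^{2\pi i/n}$), while $(\lambda_1,\lambda_2,\lambda_3)=(\zeta^j,\zeta^{-j},(-1)^j)$. A monomial $q^at^cb^e$ of $\<n\>_{q,t,b}$ then evaluates to $\zeta^{j(a-c)}(-1)^{je}=\zeta^{j(a-c+\frac n2 e)}$, using $-1=\zeta^{n/2}$ when $n$ is even (and there are no $b$'s when $n$ is odd). So the claim becomes: in each of the four defining cases, the multiset $\{a-c+\tfrac n2 e\}$ reduced modulo $n$ is $\{0,1,\dots,n-1\}$. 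For $\nu_2(n)=0$ this is immediate since $2$ is invertible mod $n$. For $\nu_2(n)\ge1$ one checks that the monomials of the leading block $\lcr{\tfrac n2}_{q,t}$ produce precisely the odd residues mod $n$, while the $b$- and $b^2$-blocks produce precisely the even residues; the shifts by $\pm1$ in the blocks $\lcr{\tfrac n4+1}_{q,t}$ and $\lcr{\tfrac n4-1}_{q,t}$ in the case $\nu_2(n)\ge3$ (and the two equal blocks $\lcr{\tfrac n4}_{q,t}$ in the case $\nu_2(n)=2$) are exactly what makes those even residues tile with no gaps and no repetitions.

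\emph{Step 2, reflections, and the odd case.} Here $\rho_{\de}(g)$ has eigenvalues $\{1,-1\}$, so $(\lambda_1,\lambda_2)=(1,-1)$, while $\lambda_3=\chi_b(g)\in\{1,-1\}$: in the standard realization of $I_2(n)$ as the symmetry group of the $n$-gon with $s$ fixing a vertex, $\lambda_3=1$ for the reflections in $\langle r^2,s\rangle$ and $\lambda_3=-1$ for the others, and these two classes act on $[n]$ with $f$ fixed points where $f=1$ if $n$ is odd and $f\in\{0,2\}$ if $n$ is even ($f=2$ for vertex-type, $f=0$ for edge-type). A monomial $q^at^cb^e$ evaluates to $(-1)^c\lambda_3^e$, so the identity reduces to counting, block by block in $\<n\>_{q,t,b}$, the parities of $c$ (when $\lambda_3=1$) or of $c+e$ (when $\lambda_3=-1$), and checking that the resulting tallies of $+1$'s and $-1$'s are $\tfrac{n+f}2$ and $\tfrac{n-f}2$, matching the eigenvalue multiplicities of $\rho_{\perm}(g)$. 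When $n$ is odd, only $f=1$ and the rotation case occur, and the identity recovers the multiset computation implicit in Rao--Suk's treatment of multisubsets (\thmref{thm:prevdisieve}(2)).

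\emph{Main obstacle.} Step 1 is routine. The content lies in Step 2, and the genuinely delicate point is the rotation case when $\nu_2(n)\ge2$: one must verify that the even-exponent contributions from the $b$- and $b^2$-blocks, after reduction mod $n$, partition the even residues with the correct multiplicities --- in particular reproducing the repeated eigenvalues of $\rho_{\perm}(r^j)$ when $\gcd(n,j)>1$, which is what pins down the block sizes $\tfrac n4\pm1$ (resp. $\tfrac n4$) as written. This is a finite but fiddly congruence computation that has to be carried out separately for $\nu_2(n)=1$, $\nu_2(n)=2$, and $\nu_2(n)\ge3$, and getting the index ranges exactly right is where the care is needed.
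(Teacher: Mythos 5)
Your proposal is correct and takes essentially the same route as the paper: your Step 1 is exactly the content of \propref{prop:symmulti} (which the paper proves by the same diagonalization/symmetric-power argument and then simply cites), and your Step 2 is precisely the ``straightforward exhaustive check'' that the $n$ monomials of $\<n\>_{q,t,b}$ evaluated at $(\lambda_1,\lambda_2,\lambda_3)$ give the eigenvalues of $\rho_{\perm}(g)$, which is all the paper's proof consists of. One minor slip: in the case $\nu_2(n)=1$ the leading block $\lcr{\tfrac{n}{2}}_{q,t}$ actually yields the \emph{even} residues (since $\tfrac{n}{2}-1$ is even) and the $b$-block the odd ones, the reverse of what you wrote, but this does not affect your reduction, since the multiset $\{a-c+\tfrac{n}{2}e\}$ is still a complete residue system modulo $n$ in every case.
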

\begin{proof}[Proof Sketch]
A straightforward exhaustive check shows that for all $g\in I_2(n)$ the list $[\<n\>_{\lambda_1,\lambda_2,\lambda_3}]$ is exactly the list of eigenvalues of $g$ considered as an $n \times n$ permutation matrix. The result then follows from \propref{prop:symmulti}.
\end{proof}

While the above result is encouraging, it does not seem that the polynomial $h_k([\<n\>_{q,t,b}])$ lends itself very well to product formulas such as the $q,t$ binomial coefficient $\qtbinom{n}{k}_{q,t}$. Further work is needed to understand what, if any, is the ``correct'' notion of dihedral sieving when $n$ is even. One major shortcoming of the current definition of $\< n \>_{q,t,b}$ is that the expression seems particularly arbitrary when $\nu_2(n) \geq 3$. We would rather have a definition which is either independent of $2$-valuation or is defined recursively on the $2$-valuation.

\subsection{Symmetric Sieving for $k$-subsets}

Part of the reason we are interested in dihedral sieving is because often when a set has a natural cyclic action, it also has a dihedral action. In a few cases, such as $X = \binom{[n]}{k}$ ($k$-subsets) and $X = \multiset{[n]}{k}$ (the set of $k$-multisubsets), the symmetric group $S_n$ has the most natural action. We can define symmetric group sieving as another special case of \defref{def:genSieve}, using the \emph{permutation representation} $\rho \coloneqq \rho_{\perm}$. This $n$-dimensional representation sends each $\pi \in S_n$ to its permutation matrix over $\mb C$.

\begin{definition}[Symmetric Sieving]\label{def:symsieve}
Suppose that $S_n$ acts on a finite set $X$, and $X(q_1,\ldots,q_n)$ is a symmetric polynomial in $n$ variables. Then the pair $(X \ \acton \  S_n,X(q_1,\ldots,q_n))$ has \emph{symmetric sieving} if, for all permutations $g \in S_n$ with eigenvalues $\{\lambda_1,\ldots,\lambda_n\}$ for $\rho_{\perm}(g)$, \[|\{x \in X : gx = x\}| = X(\lambda_1,\ldots,\lambda_n). \]
\end{definition}

The symmetric group $S_n$ contains as a subgroup every group $G$ acting faithfully on $[n]$. For such groups $G$, instances of $G$-sieving on sets $X$ with a symmetric action can be obtained from instances of symmetric sieving. We present one instance of symmetric sieving, which will make a reappearance in \secref{sec:evenmore}.

\begin{prop}\label{prop:symmulti}
Let $ h_k(q_1,\ldots,q_n)$ be the degree-$k$ complete homogeneous polynomial in $n$ variables. Then the pair $\left(\multiset{[n]}{k} \ \acton \  S_n,h_k(q_1,\ldots,q_n)\right)$ exhibits symmetric sieving.
\end{prop}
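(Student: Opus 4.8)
The plan is to exhibit an explicit $S_n$-equivariant isomorphism between the permutation module $\mathbb{C}\left[\multiset{[n]}{k}\right]$ and the degree-$k$ piece of the polynomial ring, so that the character identity demanded by \defref{def:symsieve} falls out of a trace computation. Concretely, $S_n$ acts on $\mathbb{C}[x_1,\dots,x_n]$ by permuting variables, and the degree-$k$ homogeneous component has as a $\mathbb{C}$-basis the monomials $x_1^{a_1}\cdots x_n^{a_n}$ with $\sum a_i = k$; these monomials are in $S_n$-equivariant bijection with $k$-multisubsets of $[n]$, where the multiset records, with multiplicity, which variables appear. So as $\mathbb{C}[S_n]$-modules, $\mathbb{C}\left[\multiset{[n]}{k}\right] \cong \mathbb{C}[x_1,\dots,x_n]_k$.

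With that identification in hand, I would compute the character of the right-hand side. For $g \in S_n$ with eigenvalues $\lambda_1,\dots,\lambda_n$ on $\rho_{\perm}(g)$ — equivalently, $g$ acts on the span $\langle x_1,\dots,x_n\rangle$ with those eigenvalues — the induced action on the degree-$k$ symmetric power $\mathrm{Sym}^k$ of that space has character equal to $h_k(\lambda_1,\dots,\lambda_n)$, the complete homogeneous symmetric polynomial evaluated at the eigenvalues. This is the standard fact that $\mathrm{tr}(\mathrm{Sym}^k g) = h_k(\text{eigenvalues of }g)$, provable either from the generating-function identity $\sum_k h_k(\lambda) z^k = \prod_i (1-\lambda_i z)^{-1} = \det(1 - zg)^{-1}$ or by diagonalizing (over $\mathbb{C}$ any permutation matrix is diagonalizable since $g$ has finite order) and counting weighted monomials directly. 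On the other side, $\mathrm{tr}\left(g \mid \mathbb{C}\left[\multiset{[n]}{k}\right]\right)$ is exactly $|\{x \in \multiset{[n]}{k} : gx = x\}|$, since the trace of a permutation matrix counts fixed points. Chaining the isomorphism with these two trace computations gives $|\{x : gx = x\}| = h_k(\lambda_1,\dots,\lambda_n)$ for every $g$, which is precisely the assertion of symmetric sieving.

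The one point requiring genuine care — the main (mild) obstacle — is verifying that the monomial-to-multiset bijection is honestly $S_n$-equivariant, i.e.\ that the two actions are intertwined and not merely abstractly isomorphic. The subtlety is a direction-of-action bookkeeping issue: $\pi \in S_n$ sends the variable $x_i$ to $x_{\pi(i)}$, hence sends the monomial with exponent vector $(a_1,\dots,a_n)$ to the one with exponent vector $(a_{\pi^{-1}(1)},\dots,a_{\pi^{-1}(n)})$, and one must check this matches the action on the multiset where $\pi$ relabels element $i$ by $\pi(i)$. Both are the obvious action and they do agree, but it is worth stating cleanly. Everything else is formal: reduction to a character equality, and the symmetric-power trace formula, which I would either cite or dispatch in one line via the determinant identity above.
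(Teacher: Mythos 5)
Your proposal is correct and follows essentially the same route as the paper: both identify $\mathbb{C}\bigl[\multiset{[n]}{k}\bigr]$ with the degree-$k$ monomials (the $k$th symmetric power of the permutation representation), diagonalize $\rho_{\perm}(g)$ inside $\GL_n(\mathbb{C})$, and read off the trace as $h_k(\lambda_1,\ldots,\lambda_n)$, with the fixed-point count given by the trace on the permutation module. Your extra care about the equivariance of the monomial-to-multiset bijection is a point the paper passes over silently, but it is the same argument.
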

\begin{proof}
Let $X = \multiset{[n]}{k}$ and $\rho$ be the $\GL_n(\mb C)$-representation on $\mb C[X]$ defined by acting on the variables $(x_1,\ldots,x_n)$ used in the basis for $\mb C[X]$ given by the degree $k$-monomials. Since $S_n$ is a subgroup of $\GL_n(\mb C)$ via $\rho_{\perm}$, the permutation representation, $\rho$ is also the $S_n$-representation on $\mb C[X]$. Fix $\pi \in S_n$, with eigenvalues $\underline{\lambda} = (\lambda_1,\ldots,\lambda_n)$ for $\rho_{\perm}(\pi)$. We need to show that $\tr \rho(\pi) = h_k(\lambda_1,\ldots,\lambda_n)$. Well, $\rho_{\perm} (\pi)$ may be diagonalized to the matrix diag$(\lambda_1,\ldots,\lambda_n) \eqqcolon M$, and so $\tr \rho(M) = \tr \rho(\rho_{\perm}(\pi))$, since $\rho_{\perm}(\pi)$ and $M$ are in the same conjugacy class in $\GL_n(\mb C)$. Finally, note that the eigenvalues of $\rho(M)$ are exactly the terms in $h_k(x_1,\ldots,x_n)$, so the result follows.
\end{proof}

In \secref{sect:phenom} we defined symmetric sieving and used the complete homogeneous polynomials for the case of $k$-multisubsets of $[n]$. A natural guess for symmetric sieving on $k$-subsets would be to use the elementary symmetric polynomials, but in fact these fail even in small cases. The correct polynomial is more complicated.

For an $n$-dimensional vector $\vec v = (x_1,\ldots,x_n)$ and non-negative integers $a$ and $b$, we define $s_{a,b}(x_1,\ldots,x_n)$ to be the \emph{Schur polynomial} $s_\lambda(x_1,\ldots,x_n)$, for the (hook) shape $\lambda = (a,1,1,\ldots,1)$ with $b$ total parts, unless $a$ or $b$ is zero, in which case we set $s_{a,0} = s_{0,b} = 0$. Then let $p_k(x_1,\ldots,x_n)$ be the symmetric polynomial defined by \[p_k(x_1,\ldots,x_n) =  \sum_{j = 0}^{\floor{k/2}} (-1)^{j}\left( s_{k-2j+1,j}(x_1,\ldots,x_n) + s_{k-2j,j+1}(x_1,\ldots,x_n)\right).\] 
We do not know of a more natural way to express this polynomial, though one may exist. 
\begin{theorem}\label{con:symsub}
The pair $\left(\binom{[n]}{k}\ \acton \ S_n,p_k(x_1,\ldots,x_n) \right)$ exhibits symmetric sieving for all $n \geq k \geq 0$.
\end{theorem}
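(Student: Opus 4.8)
The plan is to verify the fixed-point condition of \defref{def:symsieve} directly at the level of characters, in the spirit of the proofs of \lemref{lem:equivdef} and \propref{prop:symmulti}. Fix a permutation $\pi\in S_n$ of cycle type $\mu=(\mu_1,\dots,\mu_\ell)$. On the combinatorial side, $|\{x\in\binom{[n]}{k}:\pi x=x\}|$ equals the trace of $\pi$ acting on $\mb C[\binom{[n]}{k}]$, and a $k$-subset is $\pi$-invariant exactly when it is a union of cycles of $\pi$; hence this count is the number of $T\subseteq[\ell]$ with $\sum_{i\in T}\mu_i=k$, that is, the coefficient $[q^k]\prod_{i=1}^{\ell}(1+q^{\mu_i})$. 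On the representation side, $\rho_{\perm}(\pi)$ is block-diagonal with one cyclic-shift block of size $\mu_i$ per cycle, so its eigenvalues $\lambda_1,\dots,\lambda_n$, with multiplicity, form the multiset $\biguplus_{i=1}^{\ell}\{\zeta\in\mb C:\zeta^{\mu_i}=1\}$. Thus the theorem reduces to the claim $p_k(\lambda_1,\dots,\lambda_n)=[q^k]\prod_{i}(1+q^{\mu_i})$.

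The first step I would carry out is to rewrite $p_k$ transparently. Each consecutive pair of Schur polynomials in the definition, $s_{k-2b+1,b}+s_{k-2b,b+1}=s_{(k-2b+1,1^{b-1})}+s_{(k-2b,1^{b})}$, is exactly the product $h_{k-2b}\,e_{b}$ by the Pieri rule $h_a e_b=s_{(a,1^{b})}+s_{(a+1,1^{b-1})}$ (with the conventions $s_{(a+1,1^{-1})}=0$, $s_\emptyset=1$). Hence
\[p_k(x_1,\dots,x_n)=\sum_{b\ge 0}(-1)^{b}\,h_{k-2b}(x_1,\dots,x_n)\,e_{b}(x_1,\dots,x_n),\qquad h_m:=0\ \text{for}\ m<0.\]
Packaging this over all $k$ and using the standard generating functions $\sum_{a\ge0}h_a q^a=\prod_{i}(1-x_i q)^{-1}$ and $\sum_{b\ge0}e_b(-q^2)^b=\prod_{i}(1-x_i q^2)$ gives the compact identity of symmetric functions
\[\sum_{k\ge 0}p_k(x_1,\dots,x_n)\,q^{k}=\prod_{i=1}^{n}\frac{1-x_i q^{2}}{1-x_i q}.\]

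The second step is to specialize the variables to the eigenvalue multiset of $\rho_{\perm}(\pi)$. Grouping the product over the $\ell$ cycles and using $x^m-1=\prod_{\zeta^m=1}(x-\zeta)$, so that $\prod_{\zeta^m=1}(1-\zeta t)=1-t^{m}$, the identity above becomes
\[\sum_{k\ge 0}p_k(\lambda_1,\dots,\lambda_n)\,q^{k}=\prod_{i=1}^{\ell}\frac{1-q^{2\mu_i}}{1-q^{\mu_i}}=\prod_{i=1}^{\ell}(1+q^{\mu_i}).\]
Extracting the coefficient of $q^{k}$ and comparing with the first paragraph gives $p_k(\lambda_1,\dots,\lambda_n)=[q^k]\prod_i(1+q^{\mu_i})=|\{x:\pi x=x\}|$, which is precisely the condition of \defref{def:symsieve}. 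Since $\pi$ was arbitrary, symmetric sieving holds for all $0\le k\le n$.

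I expect essentially all of the difficulty to lie in recognizing the correct object rather than in any calculation: once one has $p_k=\sum_b(-1)^b h_{k-2b}e_b$ — equivalently, once one guesses that the governing generating function should be $\prod_i(1-x_i q^2)/(1-x_i q)$ — the remaining steps are routine. The way to find it is to start from the desired answer $\prod_i(1+q^{\mu_i})=\prod_i\frac{1-q^{2\mu_i}}{1-q^{\mu_i}}$ and observe that the eigenvalue specialization sends $1-x_i q\mapsto 1-q^{\mu_i}$ by $\prod_{\zeta^m=1}(1-\zeta t)=1-t^m$, which forces $\prod_i\frac{1-x_i q^2}{1-x_i q}$; its coefficient of $q^k$, re-expressed in the Schur basis by Pieri, is exactly the $p_k$ of the theorem. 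This picture also explains the paper's observation that $e_k$ alone is insufficient: $e_k$ packages into $\prod_i(1+x_i q)$, which specializes to $\prod_i(1-(-q)^{\mu_i})$ and therefore disagrees at even-length cycles. The only genuinely loose end is the degenerate range $k\le 1$: the displayed identity is exact provided the term $s_{0,1}$ in the definition of $p_k$ is read as $s_\emptyset=1$ rather than as $0$, which yields the (necessary) value $p_0=1$ and is immaterial for $k\ge 1$.
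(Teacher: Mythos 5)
Your proof is correct, and there is in fact nothing in the paper to compare it against: the theorem is stated without proof and attributed to an unpublished argument of Christopher Ryba, so your write-up supplies a complete argument where the paper gives none. Both key steps check out. The Pieri identity $h_a e_b = s_{(a+1,1^{b-1})} + s_{(a,1^{b})}$ does convert each bracketed pair in the definition of $p_k$ into $h_{k-2j}e_j$, including the boundary terms: with the paper's conventions $s_{a,0}=s_{0,b}=0$, the $j=0$ term is $h_k$ and, for $k$ even, the $j=k/2$ term is $e_{k/2}$, matching $h_{k-2j}e_j$ in those cases as well. The specialization $\prod_{\zeta^m=1}(1-\zeta t)=1-t^m$ then turns the generating function $\prod_i \frac{1-x_iq^2}{1-x_iq}$ into $\prod_c\bigl(1+q^{\mu_c}\bigr)$, whose coefficient of $q^k$ is exactly the number of $\pi$-fixed $k$-subsets, which is what the sieving condition of \defref{def:symsieve} requires. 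Your caveat about $k=0$ is also accurate: under the stated conventions $p_0=0$, so the statement fails (trivially) at $k=0$ unless $s_{0,1}$ is read as $1$; this is a defect of the paper's definition rather than of your argument, and your reading is the right repair. Finally, your aside that $e_k$ would specialize to $[q^k]\prod_c\bigl(1-(-q)^{\mu_c}\bigr)$, going wrong precisely on even-length cycles, correctly explains the paper's remark that the elementary symmetric polynomials fail in small cases.
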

We originally made this conjecture based on experimental evidence, and we have since been shown a proof of this fact by Christopher Ryba. Using the plethystic substitution as we did for \propref{prop:evenmultisets}, this result implies an instance of dihedral sieving for even $n$ on $\binom{[n]}{k}$. 

\section{Acknowledgements}

This research was performed at the University of Minnesota--Twin Cities REU with the support of NSF grant DMS-1148634. ZS is also supported by NSF Graduate Research Fellowship grant DGE-1752814. We are extremely grateful to Vic Reiner for his guidance and support throughout the course of this project, and would like to thank Sarah Brauner and Andy Hardt for their insightful feedback. We also thank Chris Ryba for sharing with us his proof of \thmref{con:symsub}, and Sam Hopkins for sharing with us an instance of dihedral sieving on plane partitions and for assistance with setting figures. Lastly, we are grateful to two anonymous reviewers for comments and feedback. 

\printbibliography

\end{document}